\def\z{\mathfrak{z}}
\def\u{\mathfrak{u}}
\def\k{\mathfrak{k}}
\def\g{\mathfrak{g}}
\def\h{\mathfrak{h}}
\def\n{\mathfrak{n}}
\def\C{\mathbb{C}}
\def\R{\mathbb{R}}
\def\Q{\mathbb{Q}}
\def\Z{\mathbb{Z}}
\def\N{\mathbb{N}}
\def\ad{\operatorname{ad}}
\def\tr{\operatorname{tr}}
\def\I{\operatorname{Id}}
\def\alt{\raise1pt\hbox{$\bigwedge$}}
\def\pint{\langle \cdotp,\cdotp \rangle }
\def\la{\langle}
\def\ra{\rangle}
\def\modi{\color{red}}
\theoremstyle{plain}
\newtheorem{teo}{\bf Theorem}[section]
\newtheorem{cor}[teo]{\bf Corollary}
\newtheorem{prop}[teo]{\bf Proposition}
\newtheorem{lema}[teo]{\bf Lemma}
\theoremstyle{definition}
\newtheorem{defi}[teo]{\bf Definition}
\newtheorem{ejemplo}[teo]{\bf Example}
\theoremstyle{remark}
\newtheorem{rem}[teo]{\bf Remark}
\newcommand{\ri}{{\rm (i)}}
\newcommand{\rii}{{\rm (ii)}}
\title[Vaisman solvmanifolds]{Vaisman solvmanifolds and relations with other geometric structures}
\date{}
\author{A. Andrada}
\address[A. Andrada]{FaMAF-CIEM, Universidad Nacional de C\'{o}rdoba, Ciudad Universitaria, X5000HUA C\'{o}rdoba, Argentina}
\email{andrada@famaf.unc.edu.ar}
\author{M. Origlia}
\address[M. Origlia]{KU Leuven Kulak\\ E. Sabbelaan 53\\ BE-8500 Kortrijk, Belgium; and FaMAF-CIEM, Universidad Nacional de C\'{o}rdoba \\ X5000HUA C\'{o}rdoba\\ Argentina}
\email{origlia@famaf.unc.edu.ar}
\thanks{This work was partially supported by CONICET, SECyTUNC and ANPCyT (Argentina) and the Research Foundation Flanders (Project G.0F93.17N)}
\subjclass[2010]{22E25, 53B35, 53C25, 22E40}
\keywords{Locally conformally K\"ahler structure, Vaisman structure, solvable Lie group, lattice, solvmanifold}
\begin{document}

\begin{abstract}
We characterize unimodular solvable Lie algebras with Vaisman structures in terms of K\"ahler flat Lie algebras equipped with a suitable derivation. Using this characterization we obtain algebraic restrictions for the existence of Vaisman structures and we establish some relations with other geometric notions, such as Sasakian, coK\"ahler and left-symmetric algebra structures. Applying these results we construct families of Lie algebras and Lie groups admitting a Vaisman structure and we show the existence of lattices in some of these families, obtaining in this way many examples of new solvmanifolds equipped with invariant Vaisman structures. 
\end{abstract}

\maketitle

\section{Introduction}

Let $(M,J,g)$ be a $2n$-dimensional Hermitian manifold, where $J$ is a complex structure and $g$ is a Hermitian metric, and let $\omega$ denote its fundamental $2$-form, that is, 
$\omega(X,Y)=g(JX,Y)$ for any $X,Y$ vector fields on $M$. The manifold $(M,J,g)$ is called {\it locally conformally K\"ahler} (LCK) if $g$ can be rescaled locally, in a 
neighborhood of any point in $M$, so as to be K\"ahler, i.e., there exists an open covering $\{ U_i\}_{i\in I}$ of $M$ and a family $\{ f_i\}_{i\in I}$ of $C^{\infty}$ functions, 
$f_i:U_i \to \R$, such that each local metric 
\begin{equation}\label{gi} 
g_i=\exp(-f_i)\,g|_{U_i} 
\end{equation} 
is K\"ahler. These manifolds are a natural generalization of the class of K\"ahler manifolds, and they have been much studied by many authors since the work of I. Vaisman in the 
'70s (see for instance \cite{DO, GMO, O,V4}).

An equivalent characterization of an LCK manifold can be given in terms of the fundamental form $\omega$. Indeed, a Hermitian manifold $(M,J,g)$ is LCK if and only if there exists a closed $1$-form $\theta$ globally defined on $M$ such that 
\begin{equation}\label{lck}
d\omega=\theta\wedge\omega.
\end{equation} 
This closed $1$-form $\theta$ is called the \textit{Lee form} (see \cite{L}). Furthermore, the Lee form $\theta$ is uniquely determined by the following formula: 
\begin{equation}\label{tita} 
\theta=-\frac{1}{n-1}(\delta\omega)\circ J, 
\end{equation} 
where $\omega$ is the fundamental $2$-form, $\delta$ is the codifferential operator and $2n$ is the dimension of $M$. A Hermitian $(M,J,g)$ is called \textit{globally conformally 
K\"ahler} (GCK) if there exists a $C^{\infty}$ function, $f:M\to\R$, such that the metric $\exp(-f)g$ is K\"ahler, or equivalently, the Lee form is exact, $\theta=df$. Therefore a simply 
connected LCK 
manifold is GCK. 

It is well known that LCK manifolds belong to the class $\mathcal{W}_4$ of the Gray-Hervella classification of almost Hermitian manifolds \cite{GH}. Also, an LCK manifold 
$(M,J,g)$ with $\dim M\geq 4$ is K\"ahler if and only if $\theta=0$. Indeed, $\theta\wedge\omega=0$ and $\omega$ non degenerate imply $\theta=0$. It is known that if $(M,J,g)$ is a Hermitian manifold 
with $\dim M\ge 6$ such that \eqref{lck} holds for some $1$-form $\theta$, then $\theta$ is automatically closed, and 
therefore $M$ is LCK.

The Hopf manifolds are examples of LCK manifolds, and they are obtained as a quotient of $\C^n-\{0\}$ with the Boothby metric by a discrete subgroup of automorphisms. These 
manifolds are diffeomorphic to $S^1\times S^{2n-1}$ and have first Betti number $b_1$ equal to 1, so that they do not admit any K\"ahler metric. The LCK structures on these Hopf 
manifolds have a special property, as shown by Vaisman in \cite{V2}. Indeed, the Lee form is parallel with respect to the Levi-Civita connection of the Hermitian metric. The 
LCK manifolds sharing this property form a distinguished class, which has been much studied since Vaisman's seminal work \cite{GMO,KS,OV3,OV4,V2,V3}. 

\medskip

\begin{defi}
$(M,J,g)$ is a Vaisman manifold if it is LCK and the Lee form $\theta$ is parallel with respect to the Levi-Civita connection.
\end{defi}

A Vaisman manifold satisfies stronger topological properties than general LCK manifolds. For instance, a compact Vaisman non-K\"ahler manifold $(M,J,g)$ has 
$b_1(M)$ odd (\cite{KS,V3}), whereas in \cite{OT} an example is given of a compact LCK manifold with even $b_1(M)$. This also implies that a compact Vaisman manifold cannot admit 
K\"ahler metrics, since the odd Betti numbers of a compact K\"ahler manifold are even. Moreover, it was proved in \cite[Structure Theorem]{OV3} and \cite[Corollary 3.5]{OV4} that 
any compact Vaisman manifold admits a Riemannian submersion to a circle such that all fibers are isometric and admit a natural Sasakian structure. It was shown in \cite {Ve} 
that any compact complex submanifold of a Vaisman manifold is Vaisman, as well. In \cite{Bel} the classification of compact complex surfaces admitting a Vaisman structure is 
given. It is known that a homogeneous LCK manifold is Vaisman when the manifold is compact (\cite{GMO,HK2}) and, more generally, when the manifold is a quotient of a reductive Lie 
group such that the normalizer of the isotropy group is compact (\cite{ACHK}).

\

In this article we are interested in invariant Vaisman structures on solvmanifolds, that is, compact quotients $\Gamma\backslash G$ where $G$ is a simply connected solvable Lie 
group and $\Gamma$ is a lattice in $G$. We begin by studying left invariant Vaisman structures on Lie groups, or equivalently, Vaisman structures on a Lie algebra.

Let $G$ be a Lie group with a left invariant complex structure $J$ and a left invariant metric $g$. If $(G,J,g)$ satisfies the LCK condition \eqref{lck}, then $(J,g)$ is called a 
{\em left invariant LCK structure} on the Lie group $G$. In this case, it follows from \eqref{tita} that the corresponding Lee form $\theta$ on $G$ is also 
left invariant. 

This fact allows us to define LCK structures on Lie algebras. Recall that a \textit{complex structure J} on a Lie algebra $\g$ is an endomorphism $J: \g \to \g$ satisfying 
$J^2=-\I$ and 
\[ N_J=0, \quad  \text{where} \quad N_J(x,y)=[Jx,Jy]-[x,y]-J([Jx,y]+[x,Jy]),\]        
for any $x,y \in \g$. 

Let $\g$ be a Lie algebra, $J$ a complex structure and $\pint$ a Hermitian inner product on $\g$, with $\omega\in\alt^2\g^*$ the fundamental $2$-form. We say that $(\g,J,\pint)$ 
is \textit{locally conformally K\"ahler} (LCK) if there exists $\theta \in \g^*$, with $d\theta=0$, such that 
\begin{equation} \label{g-lck-0}
d\omega=\theta\wedge\omega.
\end{equation}
Here $d$ denotes the coboundary operator of the Chevalley-Eilenberg complex of $\g$ corresponding to the trivial representation. 

If the Lie group $G$ is simply connected then any left invariant Vaisman structure on $G$ turns out to be globally conformal to a K\"ahler structure. Therefore we will study 
compact quotients of such a Lie group by discrete subgroups (if they exist); these quotients will be non simply connected and will inherit a Vaisman structure. Recall that a 
discrete subgroup $\Gamma$ of a simply connected Lie group $G$ is called a \textit{lattice} if the quotient $\Gamma\backslash G$ is compact. According to \cite{Mi}, if such a 
lattice exists then the Lie group must be unimodular. The quotient $\Gamma\backslash G$ is known as a solvmanifold if $G$ is solvable and as a nilmanifold if $G$ is nilpotent, and 
in these cases we have that $\pi_1(\Gamma\backslash G)\cong \Gamma$. Moreover, the diffeomorphism class of solvmanifolds is determined by the isomorphism class of the 
corresponding lattices, as the following results show:

\begin{teo}\cite[Theorem 3.6]{R}
Let $G_1$ and $G_2$ be simply connected solvable Lie groups and $\Gamma_i$, $i=1,2$, a lattice in $G_i$. If $f:\Gamma_1\to\Gamma_2$ is an isomorphism, then there exists a 
diffeomorphism $F:G_1\to G_2$ such that
\begin{enumerate}[(i)]
 \item $F|_{\Gamma_1}=f$,
 \item $F(\gamma g)=f(\gamma)F(g)$, for any $\gamma\in \Gamma_1$ and $g\in G_1$.
\end{enumerate}
\end{teo}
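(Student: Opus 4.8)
The plan is to use the fact that a simply connected solvable Lie group is contractible, turn the group isomorphism $f$ into an equivariant smooth map of the universal covers, and then invoke the algebraic rigidity of lattices in solvable groups to promote that map to a diffeomorphism. First I would record the topological picture. Each $G_i$ is diffeomorphic to $\R^{n_i}$, since a connected simply connected solvable Lie group is contractible; moreover such a group is torsion-free, so the lattice $\Gamma_i$ acts freely, properly discontinuously and cocompactly by translations. Hence $M_i:=\Gamma_i\backslash G_i$ is a closed manifold with universal cover $G_i$, and therefore $M_i$ is aspherical, i.e. a $K(\Gamma_i,1)$ with $\pi_1(M_i)\cong\Gamma_i$.

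Next I would produce a smooth equivariant map extending $f$. Because $M_2$ is a $K(\Gamma_2,1)$ and $M_1$ is a finite CW complex, the isomorphism $f$ is induced, up to homotopy, by a continuous map $\bar F_0\colon M_1\to M_2$ with $(\bar F_0)_\ast=f$ on fundamental groups, and after a smoothing approximation I may take $\bar F_0$ to be smooth. Lifting $\bar F_0$ to the universal covers and normalizing so that $e\in G_1$ maps to $e\in G_2$, uniqueness of lifts forces the lift $F\colon G_1\to G_2$ to intertwine the translation actions of $\Gamma_1$ and $\Gamma_2$ via $f$, which gives precisely $F|_{\Gamma_1}=f$ and $F(g\gamma)=F(g)f(\gamma)$. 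Since $f$ is an isomorphism and both manifolds are aspherical, $\bar F_0$ induces isomorphisms on all homotopy groups, so by Whitehead's theorem it is a homotopy equivalence; the only remaining task is to arrange that $F$ be a diffeomorphism.

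The heart of the argument is this last upgrade, which I would carry out by induction along the Mostow fibration. Since $[G_i,G_i]$ lies in the nilradical $N_i$, the quotient $G_i/N_i$ is abelian; by Mostow's theorem $\Gamma_i\cap N_i$ is a lattice in $N_i$, while $\Gamma_i/(\Gamma_i\cap N_i)$ is a lattice in $G_i/N_i\cong\R^{k_i}$, exhibiting $M_i$ as a nilmanifold bundle over a torus $T^{k_i}$. The subgroup $\Gamma_i\cap N_i$ is, up to commensurability, the Fitting subgroup (the maximal nilpotent normal subgroup) of $\Gamma_i$, so it is preserved by any isomorphism and $f$ respects these two fibrations. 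On the nilpotent fibers, Malcev rigidity says the induced isomorphism of lattices extends uniquely to an isomorphism $N_1\to N_2$ of simply connected nilpotent groups, which is automatically a diffeomorphism; on the bases, the induced isomorphism $\Z^{k_1}\to\Z^{k_2}$ forces $k_1=k_2$ and extends to a linear isomorphism $\R^{k_1}\to\R^{k_2}$. I would then assemble these fiberwise and base maps into a single smooth equivariant map, correcting it to be compatible with the bundle structures so that it becomes the sought diffeomorphism $F$.

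The main obstacle is exactly this assembly, i.e. passing from an equivariant homotopy equivalence to an equivariant diffeomorphism. In the general aspherical setting no such rigidity holds, and here it must be extracted from the special structure of solvable Lie groups: one uses in an essential way that the fibers are nilmanifolds (where Malcev rigidity applies) and the base is a torus, together with the fact that the two Lie groups $G_1,G_2$ need \emph{not} be isomorphic, only diffeomorphic. Making the fiberwise nilpotent isomorphism and the linear base isomorphism fit together smoothly and equivariantly, despite the possibly different twisting of the two extensions, is where the genuine work lies.
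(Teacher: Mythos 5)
This statement is quoted from Raghunathan's book and the paper offers no proof of its own, so the only meaningful comparison is with the standard argument there. Your first two paragraphs are fine but only deliver the easy part: asphericity of $M_i=\Gamma_i\backslash G_i$ plus covering-space theory produces a smooth $f$-equivariant homotopy equivalence $F\colon G_1\to G_2$, and nothing more. The entire content of the theorem is the upgrade to a diffeomorphism, and your sketch of that step contains a claim that is actually false: it is \emph{not} true that $\Gamma_i\cap N_i$ is commensurable with the Fitting subgroup of $\Gamma_i$, nor that an abstract isomorphism $f\colon\Gamma_1\to\Gamma_2$ carries $\Gamma_1\cap N_1$ into $N_2$. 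Take $G_2=\R\ltimes_\varphi\R^2$ with $\varphi(t)$ the rotation by angle $2\pi t$ and $\Gamma_2=\Z\ltimes\Z^2$; then $N_2=\R^2$, $\Gamma_2\cap N_2=\Z^2$, but $\Gamma_2\cong\Z^3$ is abelian, so its Fitting subgroup is all of $\Gamma_2$ and has infinite index over $\Z^2$. Taking $G_1=\R^3$, $\Gamma_1=\Z^3$ and $f$ any isomorphism $\Z^3\to\Gamma_2$, the map $f$ does not respect the two Mostow fibrations, so your induction along them does not get started. This is not a pathological case but precisely the situation the paper uses the theorem for: the lattice $\Lambda_{k,2\pi}\cong\Z\times\Gamma_k$ in the oscillator group $G_{(a_1,\ldots,a_n)}$ (whose nilradical is $H_{2n+1}$) is isomorphic to a lattice in $\R\times H_{2n+1}$ (which is its own nilradical), and Corollary 1.2 is invoked exactly to conclude the two solvmanifolds are diffeomorphic even though the ambient groups are not isomorphic.

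Beyond that, even where your reduction would apply you concede that the assembly of the fiberwise Malcev isomorphism with the base isomorphism into a global equivariant diffeomorphism is ``where the genuine work lies,'' i.e.\ it is left undone; in the general aspherical setting an equivariant homotopy equivalence cannot be promoted to a diffeomorphism, so this step cannot be waved through. Raghunathan's actual proof is built to circumvent exactly the failure above: it does not induct on the nilradical fibration of $\Gamma$ directly, but uses Mostow's theorem that $\Gamma_i\cap N_i$ is a lattice in $N_i$ together with a delicate construction (via auxiliary groups interpolating between $\Gamma_i$, $N_i$ and $G_i$, where Malcev rigidity for the nilpotent pieces and vanishing results for the abelian quotients can be applied) to manufacture the equivariant diffeomorphism by hand. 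As it stands your proposal identifies the right ingredients but neither the reduction nor the final construction is correct or complete.
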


\begin{cor}\cite{Mo} \label{mostow}
 Two solvmanifolds with isomorphic fundamental groups are diffeomorphic.
\end{cor}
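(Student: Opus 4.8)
The plan is to deduce this directly from the preceding theorem, so that everything reduces to a formal descent argument. Write $M_i=\Gamma_i\backslash G_i$ for the two solvmanifolds, with $G_i$ simply connected solvable and $\Gamma_i$ a lattice. Since $G_i$ is simply connected, the quotient map $G_i\to M_i$ is the universal covering, and we have the identification $\pi_1(M_i)\cong\Gamma_i$ already recorded above. Hence an isomorphism $\pi_1(M_1)\cong\pi_1(M_2)$ furnishes an isomorphism $f\colon\Gamma_1\to\Gamma_2$ of the lattices themselves. I would then feed $f$ into the preceding theorem to obtain a diffeomorphism $F\colon G_1\to G_2$ satisfying $F(g\gamma)=F(g)f(\gamma)$ for all $g\in G_1$ and $\gamma\in\Gamma_1$, and the remaining task is to push $F$ down to a diffeomorphism between the compact quotients.

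For the descent, observe that the relation $F(g\gamma)=F(g)f(\gamma)$ says exactly that $F$ intertwines the right translation action of $\Gamma_1$ on $G_1$ with that of $\Gamma_2$ on $G_2$ through $f$. Consequently $F$ carries right $\Gamma_1$-cosets to right $\Gamma_2$-cosets and induces a map $\bar F\colon G_1/\Gamma_1\to G_2/\Gamma_2$ on the orbit spaces; since each $G_i\to G_i/\Gamma_i$ is a covering (the lattice $\Gamma_i$ acts freely and properly discontinuously by right translations) and $F$ is smooth and equivariant, $\bar F$ is smooth. A direct computation, solving $F(g\gamma)=F(g)f(\gamma)$ for $F^{-1}$, shows that $F^{-1}$ is likewise equivariant, now with respect to $f^{-1}$, so it descends to a smooth inverse of $\bar F$; thus $\bar F$ is a diffeomorphism $G_1/\Gamma_1\cong G_2/\Gamma_2$. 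Finally, the inversion map $g\mapsto g^{-1}$ is a diffeomorphism of $G_i$ sending each left coset $\Gamma_i g$ to the right coset $g^{-1}\Gamma_i$, hence descends to a diffeomorphism $\Gamma_i\backslash G_i\cong G_i/\Gamma_i$; composing these identifications with $\bar F$ yields $M_1\cong M_2$.

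Essentially all of the real content is imported through the preceding theorem, whose proof (the rigidity argument of \cite{R}) is the genuinely hard analytic input; the corollary itself is purely formal. The one point that requires a little care is the bookkeeping of left versus right actions: the theorem produces a map equivariant for right translations, whereas the solvmanifolds are defined as left-coset spaces, so one must insert the inversion diffeomorphism to reconcile the two conventions. Once this is kept straight, well-definedness and smoothness of the induced maps follow automatically from the universal covering structure, and I do not anticipate any substantive obstacle beyond invoking the cited theorem.
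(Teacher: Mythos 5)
Your proposal is correct and follows exactly the route the paper intends: the paper states this corollary without proof, deriving it implicitly from the preceding theorem of Raghunathan, and your argument simply supplies the routine descent of the equivariant diffeomorphism $F$ to the compact quotients, including the correct handling of the left/right coset conventions via the inversion map. No issues.
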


\

LCK and Vaisman structures on Lie groups and Lie algebras and also on their compact quotients by discrete subgroups have been studied by several authors lately 
(see \cite{ACHK,ACFM,Baz,CFL,HK, HK2,K,S,S1,S2,S3} among others). For instance, it was shown in \cite{S} that if an LCK Lie algebra is nilpotent then it is isomorphic to 
$\h_{2n+1}\times\R$ and the LCK structure is Vaisman (see Example \ref{heisenberg}, cf. \cite{HK}). In \cite{ACHK} the authors prove that if a reductive Lie algebra admits an LCK structure then it is isomorphic to either 
$\u(2)$ or $\mathfrak{gl}(2,\R)$. In \cite{K} it is proved the non-existence of Vaisman metrics on some solvmanifolds with left invariant complex structures. In \cite{Baz} it is 
proved that if a nilmanifold $\Gamma\backslash G$ admits a Vaisman structure (not necessarily invariant), then $G$ is isomorphic to the cartesian product of a Heisenberg group 
$H_{2n+1}$ with $\R$. In \cite{S3} it is shown that if a completely solvable solvmanifold equipped with an invariant complex structure admits a Vaisman metric, then the 
solvmanifold is again a quotient of $H_{2n+1}\times \R$. In \cite{MP} the authors obtain a Vaisman structure on the total space of certain $S^1$-bundles over compact coK\"ahler 
manifolds, and all the examples they exhibit are diffeomorphic to compact solvmanifolds. Recently, in the preprint \cite{AHK}, it was shown that any unimodular Vaisman Lie algebra is isomorphic, up to modifications, to one of the following Lie algebras: $\h_{2n+1}\times\R$, $\mathfrak{su}(2)\times \R$ or $\mathfrak{sl}(2,\R)\times\R$ (see \cite{AHK} for the relevant definitions).

\medskip

In this article we obtain a characterization of the unimodular solvable Lie algebras admitting Vaisman structures in terms of K\"ahler flat Lie algebras equipped with suitable derivations (see Theorems \ref{Main Theorem-A} and \ref{Main Theorem-B}). More explicitly, we show that any unimodular 
solvable Vaisman Lie algebra is a double extension of a K\"ahler flat Lie algebra. In order to do this, we use the fact that Vaisman structures are closely related to Sasakian structures. 

This characterization allows us to build new examples of unimodular solvable non-nilpotent Lie algebras equipped with Vaisman structures. When these Lie algebras have 
integer structural constants we exhibit families of lattices in the associated simply connected solvable Lie groups, and we show that the solvmanifolds obtained in this way are 
not diffeomorphic to the product of $S^1$ with a Heisenberg nilmanifold.

Moreover, we establish a relation with other geometric structures, namely, with coK\"ahler Lie algebras and left-symmetric algebras. More precisely, we show that 
any unimodular solvable Vaisman Lie algebra is a central extension of a coK\"ahler flat Lie algebra, and using this we prove the existence of a complete left-symmetric algebra 
structure on the Vaisman Lie algebra. This gives rise to a complete flat torsion-free connection on any associated solvmanifold. 

\medskip

The article is organized as follows. In \S 2 we prove a general result about unimodular LCK Lie algebras and we recall some basic definitions. In \S 3 we review some 
properties about Vaisman Lie algebras and we give the proof of the main theorems (Theorems \ref{Main Theorem-A} and \ref{Main Theorem-B}). As a consequence of these theorems, we 
need to study derivations of a K\"ahler Lie algebra, and we do this in \S 4. In \S 5, we obtain a strong restriction for the existence of Vaisman structures, namely, if a 
unimodular solvable Lie algebra admits such a structure then the spectrum of $\ad_X$ is contained in $i\R$ for any $X$ in the Lie algebra (see Theorem \ref{Vaisman-imag.puras}). 
In \S 6, using the characterization obtained previously we provide families of new examples of unimodular solvable Vaisman Lie algebras in any even dimension and determine the 
existence of lattices in many of the corresponding solvable Lie groups. Finally, in \S 7 we prove the relation mentioned above with coK\"ahler Lie algebras (Theorem 
\ref{cokahler}) and left-symmetric algebras (Corollary \ref{LSA2}).

\medskip

\textbf{Acknowledgments.} The authors are grateful to I. Dotti and K. Hasegawa for their useful comments and to the referees for their careful reading of the manuscript and their suggestions. 

\

\section{Preliminaries}\label{prelim}

Let $(\g,J,\pint)$ be a Lie algebra with an LCK structure. We have the following orthogonal decomposition for $\g$, 
\[\g=\mathbb{R}A \oplus \ker\theta\] 
where $\theta$ is the Lee form and $\theta(A)=1$. Since $d\theta=0$, we have that $\g'=[\g,\g]\subset\ker\theta$. It is clear that $JA\in\ker\theta$, but when $\g$ is unimodular we 
may state a stronger result. Recall that a Lie algebra is unimodular if $\tr(\ad_x)=0$ for all $x$ in the Lie algebra.

\begin{prop}\label{JA-conm}
If $\g$ is unimodular and $(J,\pint)$ is an LCK structure on $\g$, then $JA\in\g'$.
\end{prop}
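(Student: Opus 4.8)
The plan is to use the uniqueness formula \eqref{tita} for the Lee form together with the adjointness of the codifferential, which becomes available exactly because $\g$ is unimodular. First I would reduce the statement to a claim about $\theta^\sharp$, the metric dual of $\theta$ (defined by $\la \theta^\sharp,x\ra=\theta(x)$ for all $x$). Since the decomposition $\g=\R A\oplus\ker\theta$ is orthogonal and $(\ker\theta)^\perp=\R\,\theta^\sharp$ is one-dimensional, $A$ must be a nonzero multiple of $\theta^\sharp$; hence $JA$ is a nonzero multiple of $J\theta^\sharp$, and it suffices to show $J\theta^\sharp\in\g'$. Next I would rewrite \eqref{tita}: solving $(\delta\omega)\circ J=-(n-1)\theta$ and using $J^{-1}=-J$ gives $\delta\omega=(n-1)\,\theta\circ J$. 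Taking metric duals, and noting that $\theta(JY)=\la\theta^\sharp,JY\ra=-\la J\theta^\sharp,Y\ra$ forces $(\theta\circ J)^\sharp=-J\theta^\sharp$, I obtain $(\delta\omega)^\sharp=-(n-1)\,J\theta^\sharp$. Thus the proposition reduces to showing $(\delta\omega)^\sharp\in\g'$.

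The key step is that, since $\g$ is unimodular, the codifferential $\delta$ coincides with the formal adjoint $d^{*}$ of the Chevalley-Eilenberg differential $d$ with respect to the inner product induced by $\pint$ on $\alt^\bullet\g^*$; for a non-unimodular Lie algebra there would be an extra term, a contraction against the trace form $x\mapsto\tr(\ad_x)$, which is precisely what vanishes here. Granting this, adjointness yields $\la\delta\omega,\alpha\ra=\la\omega,d\alpha\ra$ for every $\alpha\in\g^*$, so $\delta\omega$ is orthogonal to every closed $1$-form.

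Finally I would identify the closed $1$-forms. For $\alpha\in\g^*$ one has $d\alpha(X,Y)=-\alpha([X,Y])$, so $d\alpha=0$ if and only if $\alpha$ annihilates $\g'=[\g,\g]$; that is, the closed $1$-forms are exactly the annihilator of $\g'$. Under the identification $\g^*\cong\g$ given by $\sharp$, this annihilator corresponds to $(\g')^\perp$. Since $\delta\omega$ is orthogonal to all closed $1$-forms, its dual $(\delta\omega)^\sharp$ is orthogonal to $(\g')^\perp$, whence $(\delta\omega)^\sharp\in\big((\g')^\perp\big)^\perp=\g'$. Combining this with $(\delta\omega)^\sharp=-(n-1)J\theta^\sharp$ and the fact that $JA$ is a nonzero multiple of $J\theta^\sharp$, we conclude $JA\in\g'$.

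The main obstacle I anticipate is cleanly justifying $\delta=d^{*}$ on (left-invariant) forms, i.e.\ pinning down exactly where unimodularity enters; everything else is linear algebra with the inner product and the passage through metric duals. I would either cite the standard computation of the Riemannian codifferential on a unimodular Lie group restricted to invariant forms, or verify directly from the algebraic definition of $\delta$ that the discrepancy with $d^{*}$ is a term involving $\tr(\ad_x)$, which vanishes by hypothesis.
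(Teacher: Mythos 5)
Your argument is correct, and it reaches the conclusion by a genuinely different route from the paper. The paper computes $\delta\omega$ explicitly: it takes an orthonormal basis, applies the Koszul formula inside $\delta\eta=-\sum_j\iota_{e_j}(\nabla_{e_j}\eta)$, and finds $\delta\omega(x)=\tfrac12\sum_i\langle[Je_i,e_i],x\rangle$ after the term $-2\tr(\ad_{Jx})$ is killed by unimodularity (the remaining trace terms cancel for free); the identity \eqref{tita} then exhibits $A$ as an explicit multiple of $\sum_i J[Je_i,e_i]$, so $JA$ is visibly a sum of brackets. You instead keep $\delta\omega$ abstract and argue by duality: unimodularity is repackaged as the statement that $\delta$ is the adjoint of the Chevalley--Eilenberg $d$ on invariant forms, whence $\delta\omega$ annihilates the closed $1$-forms, i.e.\ the annihilator of $\g'$, forcing $(\delta\omega)^\sharp\in((\g')^\perp)^\perp=\g'$; combined with $(\delta\omega)^\sharp=-(n-1)J\theta^\sharp$ this gives the claim. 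The two uses of unimodularity are in fact the same fact in two guises --- the defect of $\delta$ from $d^*$ on invariant forms is precisely contraction against $x\mapsto\tr(\ad_x)$, which is exactly the term the paper discards --- but your packaging is more structural and makes transparent that $(\delta\alpha)^\sharp\in\g'$ for \emph{any} invariant form $\alpha$ on a unimodular Lie algebra, while the paper's computation has the advantage of producing the explicit formula $A=\frac{|A|^2}{2(n-1)}\sum_i J[Je_i,e_i]$, which is reused implicitly later. The one step you leave to citation, $\delta=d^*$ on invariant forms of a unimodular Lie algebra, is standard (it amounts to $d$ vanishing on $(\dim\g-1)$-forms, which is equivalent to unimodularity), and you correctly identify it as the exact point where the hypothesis enters; if you want the note self-contained you should carry out the short verification you sketch, which essentially reproduces the paper's Koszul computation.
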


\begin{proof}
Let $\{e_1,\dots, e_{2n}\}$ be an orthonormal basis of $\g$. Recall from \cite{Be} the following formula  
\[\delta\eta=-\sum_{i=1}^{2n} \iota_{e_i} (\nabla_{e_i} \eta),\]
where $\eta$ is a $p$-form and $\delta$ is the codifferential operator. Note that the Koszul formula for the Levi-Civita connection in this setting is given simply by 
\[ \la \nabla_xy,z\rangle= \frac12 \left(\la[x,y],z\ra-\la[y,z],x\ra+\la [z,x],y\ra\right), \qquad x,y,z\in\g.\]
Using this formula we compute $\delta\omega$, where $\omega$ is the fundamental $2$-form. For $x\in\g$,  
\begin{align*}
\delta\omega(x) = & -\sum_i (\nabla_{e_i} \omega)(e_i,x) \\
  = & \sum_i \omega(\nabla_{e_i}e_i,x)+\omega(e_i,\nabla_{e_i}x) \\
  = & \sum_i -\langle\nabla_{e_i}e_i,Jx\rangle+\langle Je_i,\nabla_{e_i}x\rangle \\
  = & \frac12\left\{\sum_i \langle[e_i,Jx],e_i\rangle-\langle[Jx,e_i],e_i\rangle+\langle[e_i,x],Je_i\rangle-\langle[x,Je_i],e_i\rangle +\langle[Je_i,e_i],x\rangle\right\} \\
  = & \frac12 \left\{-2\tr(\ad_{Jx})+\tr(J\circ \ad_x)-\tr(\ad_x\circ J)+ \sum_i\langle[Je_i,e_i],x\rangle\right\} \\
 = & \frac12\sum_i\langle[Je_i,e_i],x\rangle{\modi{.}}
\end{align*} 
It follows from \eqref{tita} that $\theta (x)=\frac{1}{2(n-1)}\sum\langle J[Je_i,e_i],x\rangle$. On the other hand, the Lee form can be written in terms of the inner product as 
$\theta(x)=\frac{\la A,x\ra}{|A|^2}$. If we compare both expressions we obtain that \[A=\frac{|A|^2}{2(n-1)}\sum_i J[Je_i,e_i].\] Therefore $JA\in\g'$.
\end{proof}

\

We will see in forthcoming sections that Vaisman structures on Lie algebras are closely related to certain almost contact metric structures on lower-dimensional Lie algebras. 
Moreover, when the Vaisman Lie algebra is unimodular and solvable  we will show that it is a double extension of a K\"ahler flat Lie algebra. Let us recall the relevant 
definitions.

\medskip

\subsection{Almost contact metric Lie algebras} 
An almost contact metric structure on a Lie algebra $\h$ is a quadruple $(\pint, \phi, \xi, \eta)$, where $\pint$ is an inner 
product on $\h$, $\phi$ is an endomorphism 
$\phi:\h\to\h$, and $\xi\in\h$, $\eta\in\h^*$ satisfy the following conditions:
\begin{itemize}\label{sasakian}
\item $\eta(\xi)=1$,
\item $\phi^2=-\I+\eta\otimes\xi$,
\item $\langle\phi x,\phi y\rangle=\langle x,y\rangle -\eta(x)\eta(y)$, for all $x,y\in\h$.
\end{itemize}
It follows that $|\xi|=1$, $\phi(\xi)=0$, $\eta\circ\phi=0$, and $\phi$ is skew-symmetric. The fundamental 2-form $\Phi$ associated to $(\pint, \phi, \xi, \eta)$ is defined by 
$\Phi(x,y)=\langle \phi x, y\rangle$, for $x,y\in\h$. The almost contact metric structure is called:
\begin{itemize}
 \item \textit{normal} if $N_\phi=-d\eta\otimes \xi$;
 \item \textit{Sasakian} if it is normal and $d\eta=2\Phi$;
 \item \textit{almost coK\"ahler} if $d\eta=d\Phi=0$;
 \item \textit{coK\"ahler} if it is almost coK\"ahler and normal (hence $N_\phi=0$). Equivalently, $\phi$ is parallel (see \cite{Bl}).
\end{itemize}
Here $N_\phi$ denotes the Nijenhuis tensor associated to $\phi$, which is defined, for $x,y\in\h$, by 
\begin{equation}\label{nijen}
N_\phi(x,y) = [\phi x,\phi y] +\phi^2[x,y] -\phi([\phi x,y] +[x,\phi y]). 
\end{equation}

\smallskip

CoK\"ahler structures are also known as ``cosymplectic'', following the terminology introduced by Blair in \cite{Bl1} and used in many articles since then, but their 
striking analogies with K\"ahler manifolds have led Li (see \cite{Li}) and other authors to use the term ``coK\"ahler'' for these structures, and this is becoming common practice. 
In the present article we follow this terminology.

\medskip

\begin{rem}
Let $\h$ be a Lie algebra equipped with a Sasakian structure $(\pint, \phi, \eta, \xi)$. It follows from $d\eta=2\Phi$ that $\eta$ is a contact form on $\h$, and consequently 
$\xi$ 
is called the {\em Reeb} vector. It is easy to verify that the center of $\h$ has dimension at most $1$. Moreover, if $\dim\z(\h)=1$ then the center is generated by the Reeb 
vector 
(see \cite{AFV}).  
\end{rem}

\smallskip

We recall next a result about Sasakian Lie algebras, which will be necessary to 
prove our main results.

\begin{prop}[\cite{AFV}]\label{Sasakiana}
Let $(\phi,\eta,\xi,\pint)$ be a Sasakian structure on a Lie algebra $\mathfrak{h}$ with non trivial center $\z(\mathfrak{h})$ generated by $\xi$. If $\k:=\ker \eta$, then the 
quadruple $(\k,[\cdot,\cdot]_{\k},\phi|_{\k},\pint|_{\k\times\k})$ is a K\"ahler Lie algebra, 
where $[\cdot,\cdot]_{\k}$ is the component of the Lie bracket of $\h$ on $\k$. 
\end{prop}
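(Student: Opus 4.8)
The plan is to verify directly the four defining properties of a K\"ahler Lie algebra for the quadruple $(\k,[\cdot,\cdot]_{\k},\phi|_{\k},\pint|_{\k\times\k})$, exploiting throughout that $\xi$ is central and that $\phi\xi=0$ and $\eta\circ\phi=0$. Since $\eta(\xi)=1$ we have the vector space splitting $\h=\R\xi\oplus\k$; write $\pi\colon\h\to\k$ for the projection along $\R\xi$, so that $[x,y]_{\k}=\pi([x,y])$ and $[x,y]=[x,y]_{\k}+\eta([x,y])\,\xi$ for $x,y\in\k$. Because $\xi$ is central, $\R\xi$ is an ideal and the quotient $\h/\R\xi$ is a Lie algebra; the projection $\pi$ identifies $\k$ with $\h/\R\xi$ as a vector space and transports the quotient bracket to exactly $[\cdot,\cdot]_{\k}$, so $(\k,[\cdot,\cdot]_{\k})$ is a Lie algebra. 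Next, $\eta\circ\phi=0$ forces $\phi$ to map $\h$ into $\k$, so $\phi|_{\k}$ is an endomorphism of $\k$, and for $x\in\k$ the identity $\phi^2=-\I+\eta\otimes\xi$ gives $\phi^2x=-x$, whence $(\phi|_{\k})^2=-\I$. Compatibility $\la\phi x,\phi y\ra=\la x,y\ra$ for $x,y\in\k$ is immediate from the third axiom, since $\eta$ vanishes on $\k$.

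The two substantive points are integrability and closedness. For integrability I would compute the Nijenhuis tensor $N_{\phi|_{\k}}$ with respect to $[\cdot,\cdot]_{\k}$ and compare it, term by term, with $N_\phi$ on $\h$ as in \eqref{nijen}. The mechanism is that $\phi\xi=0$ kills every $\xi$-component produced when one replaces $[\cdot,\cdot]_{\k}$ by the full bracket $[\cdot,\cdot]$, and $\phi^2\xi=0$ does the same in the quadratic term; consequently each term of $N_{\phi|_{\k}}(x,y)$ equals the $\k$-component of the corresponding term of $N_\phi(x,y)$, giving $N_{\phi|_{\k}}(x,y)=\pi\bigl(N_\phi(x,y)\bigr)$ for $x,y\in\k$. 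Normality reads $N_\phi=-d\eta\otimes\xi$, whose values lie in $\R\xi$, so $\pi(N_\phi(x,y))=0$ and hence $N_{\phi|_{\k}}=0$; together with $(\phi|_{\k})^2=-\I$ this makes $\phi|_{\k}$ a complex structure on $\k$.

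For the K\"ahler condition, observe that the fundamental $2$-form of $(\k,\phi|_{\k},\pint|_{\k})$ is precisely the restriction of $\Phi$, where $\Phi(x,y)=\la\phi x,y\ra$. The Sasakian hypothesis $d\eta=2\Phi$ gives $\Phi=\tfrac12\,d\eta$, hence $d\Phi=\tfrac12\,d(d\eta)=0$ on $\h$. Evaluating the Chevalley--Eilenberg differential on a triple $x,y,z\in\k$ and using $\phi\xi=0$ to replace each full bracket by its $\k$-component inside $\Phi(\cdot,z)$, one checks that $\bigl(d_{\k}(\Phi|_{\k})\bigr)(x,y,z)=(d_{\h}\Phi)(x,y,z)=0$, so the restricted fundamental form is closed. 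This establishes that $(\k,[\cdot,\cdot]_{\k},\phi|_{\k},\pint|_{\k\times\k})$ is a K\"ahler Lie algebra.

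The main obstacle is the bookkeeping in the integrability and closedness steps: one must carefully distinguish the bracket of $\h$ from $[\cdot,\cdot]_{\k}$ and track the $\xi$-components they differ by. The uniform reason both computations collapse to the desired identities is that $\phi$ annihilates $\R\xi$, so any $\xi$-component created by passing from $[\cdot,\cdot]_{\k}$ to $[\cdot,\cdot]$ is invisible to $\phi$, to $\phi^2$, and to $\Phi(\cdot,z)$.
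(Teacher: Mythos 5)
Your proof is correct and complete. The paper itself does not prove this proposition — it is quoted from [AFV] without proof — so there is no in-paper argument to compare against; your direct verification (Jacobi for $[\cdot,\cdot]_{\k}$ via the central ideal $\R\xi$, the identity $N_{\phi|_{\k}}=\pi\circ N_\phi$ using $\phi\xi=0$ and normality, and closedness of $\Phi|_{\k}$ from $\Phi=\tfrac12 d\eta$) is exactly the standard route and all steps check out.
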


\

\subsection{Double extension of Lie algebras}

Let $\h$ be a real Lie algebra and $\beta\in\alt^2 \h^*$ a closed $2$-form. If we consider $\R$ as the $1$-dimensional abelian Lie algebra, generated by an element $\xi\in\R$, we 
may define on the vector space $\R\xi\oplus \h$ the following bracket:
\[ [x,y]_\beta=\beta(x,y)\xi + [x,y]_\h, \qquad [\xi,x]_\beta=0, \qquad x,y\in\h. \]
It is readily verified that this bracket satisfies the Jacobi identity, and $\R\xi\oplus \h$ will be called the central extension of $\h$ by the closed $2$-form $\beta$. It will 
be denoted $\h_\beta(\xi)$.

Given a derivation $D$ of $\h_\beta(\xi)$, the double extension of $\h$ by the pair $(D,\beta)$ is defined as the semidirect product $\h(D,\beta):=\R \ltimes_D \h_\beta(\xi)$ (see 
\cite{ARS} for more details). 

\medskip

\begin{lema}\label{unimod}
 Let $\h(D,\beta)$ be the double extension of $\h$ by the pair $(D,\beta)$. Then $\h(D,\beta)$ is unimodular if and only if $\h$ is unimodular and $\tr D=0$.
\end{lema}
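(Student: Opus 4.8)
The plan is to compute the linear functional $v\mapsto\tr(\ad_v)$ on $\h(D,\beta)$ directly and read off when it vanishes identically. Since this functional is linear in $v$, it is enough to evaluate it on a spanning set of $\h(D,\beta)=\R T\oplus\R\xi\oplus\h$, where $T$ denotes the generator of the $\R$ factor of the semidirect product, so that $\ad_T|_{\h_\beta(\xi)}=D$ and $[T,T]=0$. I would work throughout in a basis adapted to this vector-space decomposition, writing each $\ad_v$ in block form with respect to the three pieces $\R T$, $\R\xi$ and $\h$.

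Two structural observations make almost all of the blocks irrelevant. First, the derived ideal satisfies $[\h(D,\beta),\h(D,\beta)]\subset\h_\beta(\xi)$, so every $\ad_v$ has image in $\R\xi\oplus\h$ and hence its $T$-diagonal entry vanishes. Second, $\xi$ is central, so $\ad_v\xi=0$ for all $v$ and the $\xi$-diagonal entry vanishes as well. Consequently $\tr(\ad_v)$ reduces to the trace of the $\h$-component of $\ad_v|_\h$, and I only have to track this single diagonal block.

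It then remains to evaluate this block on the three types of generators. For $x\in\h$, the bracket $[x,y]=\beta(x,y)\xi+[x,y]_\h$ shows that the $\h$-component of $\ad_x|_\h$ is exactly $\ad^\h_x$, so $\tr(\ad_x)=\tr(\ad^\h_x)$. For $\xi$ the trace is $0$. For $T$ we have $\ad_T|_\h=D|_\h$, and since $\xi\in\z(\h(D,\beta))$ forces $D\xi=0$, the matrix of $D$ on $\h_\beta(\xi)=\R\xi\oplus\h$ is block upper triangular with a zero $\xi$-diagonal block; hence the trace of its $\h$-block equals $\tr D$, giving $\tr(\ad_T)=\tr D$. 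Putting these together, $\h(D,\beta)$ is unimodular precisely when $\tr(\ad^\h_x)=0$ for all $x\in\h$ and $\tr D=0$, that is, when $\h$ is unimodular and $\tr D=0$. The computation is routine; the only point requiring care is the bookkeeping of the block-triangular structure, in particular verifying that the $T$- and $\xi$-diagonal contributions vanish and that $\tr D$ is unaffected by the $\xi$-row of $D$.
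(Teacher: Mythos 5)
Your proof is correct and follows essentially the same route as the paper: both compute $\operatorname{tr}(\ad_v)$ directly on generators adapted to the decomposition $\R A\oplus\R\xi\oplus\h$, observing that only the $\h$-diagonal block contributes for $v\in\h_\beta(\xi)$ and that $\ad_A$ contributes $\operatorname{tr}D$. The paper merely phrases the same bookkeeping via an auxiliary inner product and orthonormal basis rather than your block-matrix language.
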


\begin{proof}
Let us denote $\g=\h(D,\beta)$. 
Let $A$ be a generator of $\R$, so that $[A,x]=Dx$ for all $x\in \h_\beta(\xi)$.

Fix any inner product $\pint$ on $\g$ such that $\text{span}\{A,\xi\}$ is orthogonal to $\h$, $\langle A,\xi\rangle=0$ and $|A|=|\xi|=1$. Given an orthonormal basis 
$\{e_1,\dots,e_{n}\}$ of $\h$,  we have that $ \{A, \xi\}\cup\{e_1,\dots,e_n\}$ is an orthonormal basis of $\g$.
For any $x\in\h_\beta(\xi)$, we compute 
\begin{align*}
\tr(\ad_x^\g) =& \langle [x,A]_\g,A\rangle +\langle [x,\xi]_\beta,\xi\rangle + \sum_{i=1}^n \langle [x,e_i]_\beta,e_i\rangle  \\
=& \sum_{i=1}^n \left( \langle [x,e_i]_\h,e_i\rangle + \langle \beta(x,e_i)\xi,e_i\rangle \right) \\
=& \tr(\ad_x^\h).
\end{align*}
From this and the fact that $\tr(\ad^\g_A)=\tr D$, the result follows.
\end{proof}

\

\section{Vaisman structures on Lie algebras}

In this section we show the main results of this article, namely, a characterization of unimodular solvable Lie algebras admitting a Vaisman structure (Theorems \ref{Main 
Theorem-A} and \ref{Main Theorem-B}). In order to prove them, we establish first basic properties of Vaisman Lie algebras and later we exploit the close relation between Vaisman 
and Sasakian structures. 

\medskip

A \textit{Vaisman structure} on a Lie algebra $\g$ is an LCK structure $(J,\pint)$ such that the associated Lee form $\theta$ satisfies $\nabla \theta=0$. The Lie algebra $\g$ 
together with the Vaisman structure $(J,\pint)$ will be called a Vaisman Lie algebra.

\smallskip

If $\g=\R A\oplus \ker\theta$ with $A\in(\ker\theta)^\perp$ such that $\theta(A)=1$,  then the Vaisman condition is equivalent to $\nabla A=0$, and 
since $d\theta=0$, this is in turn equivalent to $A$ being a Killing vector field (considered as a left invariant vector field on the associated Lie group with left invariant 
metric). Recalling that a left invariant vector field is Killing if and only if the corresponding adjoint operator on the Lie algebra is skew-symmetric, we have:

\begin{prop}[\cite{AO}]\label{adA-antisim}
If $(J,\pint)$ is an LCK structure on $\g$, then it is Vaisman if and only if the endomorphism $\ad_A$ is skew-symmetric.
\end{prop}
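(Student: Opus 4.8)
The plan is to lean on the reduction recorded immediately above the statement: since the Lee form is $\theta=|A|^{-2}\la A,\cdot\ra$ with $|A|$ constant, and the metric is parallel for the Levi-Civita connection, one has $\nabla\theta=0$ if and only if $\nabla A=0$. So it suffices to prove that, for an LCK structure, $\nabla A=0$ holds exactly when $\ad_A$ is skew-symmetric. I would organize everything around the single bilinear map
\[ B(x,y):=\la\nabla_x A,y\ra, \qquad x,y\in\g, \]
and the observation that $\nabla A=0$ is equivalent to $B\equiv 0$, i.e.\ to the simultaneous vanishing of the symmetric and skew-symmetric parts of $B$.

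First I would write $B$ out with the Koszul formula quoted in the proof of Proposition \ref{JA-conm}, namely $B(x,y)=\tfrac12\bigl(\la[x,A],y\ra-\la[A,y],x\ra+\la[y,x],A\ra\bigr)$, and then split it. A short antisymmetrization shows that the symmetric part is
\[ B_s(x,y)=\tfrac12\bigl(\la[x,A],y\ra+\la[y,A],x\ra\bigr)=-\tfrac12\bigl(\la\ad_A x,y\ra+\la x,\ad_A y\ra\bigr), \]
which vanishes identically precisely when $\ad_A$ is skew-symmetric. This is exactly the classical statement that a left invariant vector field is Killing if and only if its adjoint is skew-symmetric, now made explicit.

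Next I would handle the skew-symmetric part, where the same manipulation gives $B_a(x,y)=-\tfrac12\la[x,y],A\ra$. Since the Chevalley–Eilenberg differential yields $d\theta(x,y)=-\theta([x,y])=-|A|^{-2}\la A,[x,y]\ra$, this is nothing but $B_a=\tfrac{|A|^2}{2}\,d\theta$, so the LCK hypothesis $d\theta=0$ forces $B_a\equiv 0$ automatically. Combining the two computations, $\nabla A=0$ reduces to $B_s\equiv 0$, which is the skew-symmetry of $\ad_A$, completing the equivalence. The calculation itself is elementary; the only conceptual point, and the one I would flag as the heart of the argument, is recognizing that the antisymmetric part of $x\mapsto\nabla_x A$ is governed entirely by $d\theta$, so that the closedness of the Lee form is exactly what promotes the weaker condition ``$A$ is Killing'' to the full parallelism ``$\nabla A=0$''.
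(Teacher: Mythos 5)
Your proposal is correct and follows exactly the route the paper sketches just above the statement: Vaisman $\Leftrightarrow\nabla A=0$, the closedness of $\theta$ kills the antisymmetric part of $x\mapsto\la\nabla_xA,\cdot\ra$, and the remaining symmetric part is the Killing condition, i.e.\ skew-symmetry of $\ad_A$. You have simply written out explicitly, via the Koszul formula, the decomposition that the paper invokes without computation, so there is nothing to add.
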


\smallskip

\begin{rem}
In \cite{S2} it was proved that an LCK structure on a unimodular solvable Lie algebra is Vaisman if and only if $\langle [A,JA],JA\rangle =0$, where $A$ is the metric dual of 
$\theta$. However, the characterization given in Proposition \ref{adA-antisim} will be more useful for our purposes. 
\end{rem}

\medskip

\begin{ejemplo}\label{heisenberg}
Let $\g=\R\times\h_{2n+1}$, where $\h_{2n+1}$ is the $(2n+1)$-dimensional
Heisenberg Lie algebra. There is a basis $\{x_1,\dots,x_n,y_1,\dots,y_n,z,w\}$ of $\g$ with Lie
brackets given by $[x_i,y_i]=z$ for $i=1,\dots,n$ and $w$ in the center. We define an inner
product $\pint$ on $\g$ such that the basis above is orthonormal. Let $J$ be the almost complex structure on $\g$ given by:
\[Jx_i=y_i, \quad  Jz=-w  \; \; \; \text{for $i=1,\dots,n$}.\]
It is easily seen that $J$ is a complex structure on $\g$ compatible with $\pint$. If $\{x^i,y^i,z^*,w^*\}$ denote the $1$-forms dual to $\{x_i,y_i,z,w\}$ respectively, then the 
fundamental $2$-form is: 
\[\omega=\sum_{i=1}^n(x^i\wedge y^i) - z^*\wedge w^*.\]
Thus, \[ d\omega=w^*\wedge\omega,\]
and therefore $(\g,J,\pint)$ is LCK. It follows from Proposition \ref{adA-antisim} with $A=w$ that this structure is Vaisman. This example appeared in
\cite{CFL} (see also \cite{S,AO}).

It is known that $\g$ is the Lie algebra of the Lie group $\R\times H_{2n+1}$,
where $H_{2n+1}$ is the $(2n+1)$-dimensional Heisenberg group. The Lie group $H_{2n+1}$ admits a lattice $\Gamma$ and therefore the nilmanifold $N= S^1 \times
\Gamma\backslash H_{2n+1}$ admits an LCK structure which is Vaisman. It cannot admit any K\"ahler metric (see \cite{BG}), and for $n=1$ the nilmanifold $N$ is a primary Kodaira surface.
\end{ejemplo}

\

The following important properties of Vaisman Lie algebras follow from Proposition \ref{adA-antisim} and the integrability of the complex structure (see also \cite{V2}):

\begin{prop}\label{vaisman-properties}
Let $(\g,J,\pint)$ be a Vaisman Lie algebra, then
\begin{enumerate}
\item $[A,JA]=0$,
\item $J\circ\ad_A=\ad_A\circ J$,
\item $J\circ\ad_{JA}=\ad_{JA}\circ J$,
\item $\ad_{JA}$ is skew-symmetric.
\end{enumerate}
\end{prop}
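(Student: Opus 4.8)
The plan is to derive all four properties from the two facts we have in hand: by Proposition \ref{adA-antisim} the operator $\ad_A$ is skew-symmetric, and the complex structure $J$ is integrable, i.e.\ $N_J=0$. I would also use repeatedly that on a Hermitian Lie algebra the Lee vector $A$ satisfies $\ad_A^* = -\ad_A$ (skew-symmetry) and that $J$ is orthogonal, $\langle Jx,Jy\rangle = \langle x,y\rangle$.

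First I would prove (2), that $J$ commutes with $\ad_A$. The idea is to feed the pair $(A,\cdot)$ into the integrability condition $N_J=0$. Writing out $N_J(A,x)=0$ gives
\[
[JA,Jx]-[A,x]-J[JA,x]-J[A,Jx]=0,
\]
which rearranges to $\ad_A(Jx) - J\,\ad_A x = [JA,Jx]-J[JA,x] = \ad_{JA}(Jx)-J\,\ad_{JA}x$. So the commutator $[J,\ad_A]$ equals the analogous commutator $[J,\ad_{JA}]$ composed appropriately. To close this I would use skew-symmetry: since $\ad_A$ is skew and $J$ is orthogonal, both $J\ad_A$ and $\ad_A J$ are skew, so their difference $[J,\ad_A]$ is skew-symmetric; a parallel manipulation pins down its symmetric/antisymmetric type and forces $[J,\ad_A]=0$. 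The cleanest route is probably to pair the integrability identity with the inner product against suitable vectors and exploit that $A$ is orthogonal to $\g' = [\g,\g]$ (from $d\theta=0$, $\g'\subset\ker\theta$), together with $JA\in\g'$ when $\g$ is unimodular (Proposition \ref{JA-conm}), although for (2) only skew-symmetry of $\ad_A$ should be needed.

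Next, (1) follows quickly from (2): applying the commutation $J\ad_A=\ad_A J$ to the vector $A$ gives $J[A,A]=[A,JA]$, but $[A,A]=0$, hence $[A,JA]=0$; equivalently $\ad_A(JA)=0$, which also says $\ad_A$ annihilates $JA$. Then (3) and (4) should come by transporting the properties of $\ad_A$ to $\ad_{JA}$. For (4), I would compute $\langle \ad_{JA}x,y\rangle + \langle x,\ad_{JA}y\rangle$ and rewrite $\ad_{JA}=\ad_{J A}$ using (2)-type relations together with $[A,JA]=0$; the vanishing of $[A,JA]$ means $A$ and $JA$ generate an abelian subalgebra, and combining this with the skew-symmetry of $\ad_A$ and the integrability identity $N_J(JA,\cdot)=0$ should yield skew-symmetry of $\ad_{JA}$. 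Finally (3), that $J$ commutes with $\ad_{JA}$, I would obtain by substituting $x=JA$ (or by a second application of $N_J=0$ with the first slot equal to $JA$) into the integrability identity, now using (1), (2), and (4) to cancel terms.

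The main obstacle I expect is (4): skew-symmetry of $\ad_A$ is given to us directly, but skew-symmetry of $\ad_{JA}$ is not, so it has to be extracted purely from integrability plus the already-established relations. The delicate point is that the integrability identity mixes brackets with $JA$ in both slots, so I will need $[A,JA]=0$ (property (1)) and the commutation $[J,\ad_A]=0$ (property (2)) in place before (4) becomes tractable; hence the logical order (2) $\Rightarrow$ (1) $\Rightarrow$ (4) $\Rightarrow$ (3) matters. A careful bookkeeping of which identity supplies which cancellation, rather than any single hard computation, is where the real work lies.
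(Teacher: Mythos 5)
Your outline has a genuine gap: it never uses the LCK identity $d\omega=\theta\wedge\omega$ itself, and the four properties cannot be extracted from integrability and skew-symmetry of $\ad_A$ alone. Expanding $N_J(A,\cdot)=0$ yields exactly one operator identity,
\[
\ad_{JA}\circ J-J\circ \ad_{JA}\;=\;\ad_A+J\circ \ad_A\circ J ,
\]
which ties the two unknown commutators $[J,\ad_A]$ and $[J,\ad_{JA}]$ to each other but forces neither to vanish; your plan to ``close'' the argument is circular, because controlling $\ad_{JA}$ is precisely what items (3)--(4) assert. Two supporting claims are also incorrect: since $(J\ad_A)^*=\ad_A^*J^*=\ad_A J$, neither $J\ad_A$ nor $\ad_AJ$ is individually skew-symmetric (only their difference is, being a commutator of skew operators), and skew-symmetry of $[J,\ad_A]$ does not make it zero. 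That some input beyond your list is indispensable can be seen on $\g=\R A\times(\R B\ltimes\R^4)$ with $[B,e_1]=e_1$, $[B,e_2]=e_2$, $[B,e_3]=-e_3$, $[B,e_4]=-e_4$, $JA=B$, $Je_1=e_2$, $Je_3=e_4$, and the obvious orthonormal metric: here $N_J=0$, $\ad_A=0$ is skew, $A\perp\g'$ and $\theta=A^*$ is closed, yet $\ad_{JA}=\ad_B$ is symmetric and nonzero. Of course this $(J,\pint)$ is not LCK --- which is exactly the point: the hypothesis you are not using is the one doing the work.

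The argument the paper is invoking (via Vaisman's work \cite{V2}) combines $N_J=0$ with $d\omega=\theta\wedge\omega$ in the general Hermitian formula for $\nabla J$, which under the normalization $|A|=1$ gives
\[
2(\nabla_xJ)y=\theta(Jy)\,x-\theta(y)\,Jx+\la x,y\ra JA+\omega(x,y)A .
\]
Skew-symmetry of $\ad_A$ together with $A\perp\g'\subset\ker\theta$ gives, via Koszul, $\nabla A=0$ and $\nabla_A=\ad_A$. The displayed formula then yields $\nabla_AJ=0$, hence (2); next $[A,JA]=\nabla_A(JA)-\nabla_{JA}A=J\nabla_AA=0$, hence (1); the map $x\mapsto\nabla_x(JA)=(\nabla_xJ)A=\tfrac12\bigl(-Jx+\theta(x)JA-\la x,JA\ra A\bigr)$ is skew-symmetric, so $JA$ is Killing and $\ad_{JA}$ is skew, hence (4); and $\nabla_{JA}J=0$ combined with (4) gives (3). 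I suggest you rebuild the proof along these lines (or, equivalently, write out $d\omega(A,x,y)=(\theta\wedge\omega)(A,x,y)$ and $d\omega(JA,x,y)=(\theta\wedge\omega)(JA,x,y)$ and combine them with $N_J=0$); in either form the LCK equation must enter beyond merely supplying Proposition \ref{adA-antisim}.
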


\

Without loss of generality, we will assume from now on that $|A|=1$ (rescaling the metric if necessary), hence $\theta(x)=\langle A, x \rangle$ for all $x\in\g$. Let us denote $W=(\text{span}\{A,JA\})^\perp$, so that $\ker\theta=\R JA\oplus^\perp W$.

\medskip 

The following proposition, which shows the close relation between Vaisman and Sasakian structures, follows from general results proved by I. Vaisman, but we include a proof at the 
Lie algebra level for the sake of completeness. We will use the following convention for the action of a complex structure on a 1-form: if $\alpha$ is a 1-form, then 
$J\alpha:=-\alpha\circ J$.  

\begin{prop}\label{casi_sasakiana}
Set $\xi:=JA$,  $\eta:=J\theta|_{\ker\theta}$, and define an endomorphism $\phi\in\operatorname{End}(\ker
\theta)$ by $\phi(a\xi+x)=Jx$ for $a\in\R$ and $x\in W$. Then the following relations hold:
\begin{enumerate}
\item $\phi^2=-\I+\eta\otimes\xi$,
\item $\langle\phi x,\phi y\rangle=\langle x,y\rangle -\eta(x)\eta(y)$, for all $x,y\in\ker\theta$,
\item $N_\phi=-d\eta\otimes\xi$,
\item $d\eta(x,y)=-\langle\phi x, y \rangle$, for all $x,y\in\ker\theta$,
\end{enumerate}
where $N_\phi$ is defined as in \eqref{nijen}.
\end{prop}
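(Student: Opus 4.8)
The plan is to make every object explicit in terms of the metric and $J$, and then verify the four identities in the order (1), (2), (4), (3), since the normality statement (3) will rely on (4). \textbf{Preliminary identities.} Since $|A|=1$, the Lee form is the metric dual $\theta=\langle A,\cdot\rangle$, so for $x\in\ker\theta$ I would first record that $\eta(x)=(J\theta)(x)=-\theta(Jx)=\langle JA,x\rangle=\langle\xi,x\rangle$; in particular $\eta$ is the restriction of $\langle\xi,\cdot\rangle$ and $\eta(\xi)=1$. Because $\operatorname{span}\{A,\xi\}=\operatorname{span}\{A,JA\}$ is $J$-invariant and $J$ is orthogonal, its orthogonal complement $W$ is $J$-invariant as well. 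Writing $z=\eta(z)\xi+w$ with $w\in W$ for $z\in\ker\theta$, this yields the key formula $\phi z=Jw=Jz+\eta(z)A$ (equivalently, $\phi z$ is the orthogonal projection of $Jz$ onto $\ker\theta$), which is the device that later funnels everything through the integrability of $J$.

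\textbf{Parts (1) and (2).} Decompose $x=a\xi+u$, $y=b\xi+v$ with $u,v\in W$. Then $\phi x=Ju\in W$ and $\phi^2 x=J^2u=-u=-x+\eta(x)\xi$, which is (1). For (2), $\langle\phi x,\phi y\rangle=\langle Ju,Jv\rangle=\langle u,v\rangle$ since $J$ is an isometry, while $\langle x,y\rangle-\eta(x)\eta(y)=(ab+\langle u,v\rangle)-ab=\langle u,v\rangle$.

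\textbf{Part (4).} I would first note that for $x,y\in\ker\theta$ one has $\omega(x,y)=\langle\phi x,y\rangle$ (a short computation with the decomposition above, using $A,\xi\perp W$), so it suffices to show $d\eta(x,y)=-\omega(x,y)$. Since $\g'\subset\ker\theta$, the subspace $\ker\theta$ is a subalgebra, and $d\eta(x,y)=-\eta([x,y])=-\langle\xi,[x,y]\rangle$; hence I must prove $\langle\xi,[x,y]\rangle=\omega(x,y)$. This I would obtain by evaluating the LCK equation \eqref{g-lck-0} on the triple $(A,x,y)$: the right-hand side $\theta\wedge\omega$ collapses to $\omega(x,y)$, while $d\omega(A,x,y)=-\omega([A,x],y)+\omega([A,y],x)-\omega([x,y],A)$. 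The first two terms cancel because $J\ad_A$ is symmetric, which holds since $\ad_A$ is skew-symmetric (Proposition \ref{adA-antisim}) and commutes with $J$ (Proposition \ref{vaisman-properties}), so that $(J\ad_A)^*=\ad_A^*J^*=(-\ad_A)(-J)=J\ad_A$. The remaining term is $-\omega([x,y],A)=\langle JA,[x,y]\rangle=\langle\xi,[x,y]\rangle$, which gives the claim.

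\textbf{Part (3), the main obstacle.} Both sides are bilinear and skew on $\ker\theta=\R\xi\oplus^\perp W$, so I reduce to two cases. When one argument is $\xi$, the right side vanishes because $\phi\xi=0$, and $N_\phi(\xi,\cdot)=0$ follows from $\phi\xi=0$ together with the facts that $\ad_\xi=\ad_{JA}$ is skew-symmetric, commutes with $J$, and therefore preserves $W$ (Proposition \ref{vaisman-properties}). The substantive case is $x,y\in W$, where $\phi x=Jx$ and $\phi y=Jy$. Substituting $\phi z=Jz+\eta(z)A$ and $\phi^2 z=-z+\eta(z)\xi$ into the definition \eqref{nijen} of $N_\phi$, the purely $J$-linear part reassembles into $N_J(x,y)$, which vanishes by integrability of $J$ on $\g$; what remains is $N_\phi(x,y)=\omega(x,y)\xi-\big(\omega(Jx,y)+\omega(x,Jy)\big)A$. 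The coefficient of $A$ is zero by compatibility, since $\omega(Jx,y)=-\langle x,y\rangle=-\omega(x,Jy)$, leaving $N_\phi(x,y)=\omega(x,y)\xi=-d\eta(x,y)\xi$ by part (4). The care needed here is purely bookkeeping: tracking the $\xi$- and $A$-components produced when $\phi$ is replaced by $J$, and checking that the brackets computed in $\ker\theta$ coincide with those in $\g$, which they do since $[\ker\theta,\ker\theta]\subset\g'\subset\ker\theta$.
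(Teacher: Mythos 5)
Your proposal is correct and follows essentially the same route as the paper: parts (1) and (2) from the Hermitian compatibility, part (4) by evaluating the LCK identity $d\omega=\theta\wedge\omega$ on a triple containing $A$ and cancelling the $\ad_A$-terms via skew-symmetry of $\ad_A$ and $J\ad_A=\ad_AJ$, and part (3) by reducing $N_\phi$ to $N_J$ plus the $\xi$- and $A$-corrections coming from $\phi z=Jz+\eta(z)A$. The only differences are cosmetic: you prove (4) before (3) and spell out the bookkeeping (including the $\xi$-argument case) that the paper compresses into ``it can be seen that $N_\phi=N_J-d\eta\otimes\xi$.''
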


\begin{proof}
Note that $W=\ker\eta$ and  $\eta(\xi)=1$.

$(1)$ and $(2)$ follow from the fact that $(J,\pint)$ is a Hermitian structure on $\g$ and $|A|=1$. Since $\g$ is Vaisman, using Proposition \ref{adA-antisim} and Proposition 
\ref{vaisman-properties}, it can be seen that 
\[N_\phi(x,y)=N_J(x,y)- d\eta(x,y)\xi,\] 
for all $x,y\in\ker\theta$. Since $J$ is integrable, we have that $N_J=0$ and then we obtain $(3)$.

In order to prove $(4)$ we compute, for all $x,y\in\ker\theta$, 
\[d\eta(x,y) = \theta(J[x,y])=\la A, J[x,y] \ra=\omega([x,y],A).\] 
On the other hand,
\begin{align*}
\la Jx,y\ra &= \omega(x,y)\\
&= \theta\wedge\omega(x,y,A)\\
&= d\omega(x,y,A)\\
&= -\omega([x,y],A)-\omega([y,A],x)-\omega([A,x],y)\\
&= -\omega([x,y],A),
\end{align*}
where we have used Proposition \ref{vaisman-properties} in the last step. It is easy to verify that $\la Jx,y\ra=\la \phi x, y\ra$ for any $x,y\in\ker\theta$, thus the proof is 
complete.
\end{proof}

\smallskip

The quadruple $(\pint|_{\ker\theta},\phi,\eta,\xi)$ on $\ker\theta$ from Proposition \ref{casi_sasakiana} does not satisfy exactly the equations of a Sasakian structure given in 
\S 
\ref{prelim}, but it is easy to show that if we modify it as follows:
\[\pint'=\frac14\pint, \quad \phi'=\phi, \quad  \eta'=-\frac12 \eta, \quad \xi'=-2\xi,\]
then $(\pint',\phi',\eta',\xi')$ is a Sasakian structure on $\ker\theta$. However, in this article, for simplicity, we shall call $(\pint|_{\ker\theta},\phi,\eta,\xi)$ a Sasakian 
structure on $\ker\theta$. More generally, when we refer to a Sasakian structure on a Lie algebra we will be assuming that it satisfies the equations on Proposition 
\ref{casi_sasakiana}. Therefore, we may rewrite Proposition \ref{casi_sasakiana} as 

\begin{cor}\label{kernel}
If $(\g,J,\pint)$ is a Vaisman Lie algebra with Lee form $\theta$, then $\ker\theta$ has a Sasakian structure.
\end{cor}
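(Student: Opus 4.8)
The plan is to read the corollary straight off Proposition \ref{casi_sasakiana}, since that proposition has already produced the candidate quadruple $(\pint|_{\ker\theta},\phi,\eta,\xi)$ and verified the bulk of the required identities. First I would record the normalization $\eta(\xi)=1$: from $\xi=JA$, $\eta=-\theta\circ J$ and $J^2=-\I$ one gets $\eta(\xi)=-\theta(J^2A)=\theta(A)=1$, which is exactly the condition built into the construction (and which was already noted at the start of the proof of Proposition \ref{casi_sasakiana}).

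Next I would match each axiom against the four relations of Proposition \ref{casi_sasakiana}. Items $(1)$ and $(2)$ are verbatim the two algebraic conditions defining an almost contact metric structure, and together with $\eta(\xi)=1$ they yield $|\xi|=1$, $\phi\xi=0$ and $\eta\circ\phi=0$ automatically. Item $(3)$, $N_\phi=-d\eta\otimes\xi$, is precisely the normality condition. It then remains only to reconcile item $(4)$ with the Sasakian requirement $d\eta=2\Phi$, where $\Phi(x,y)=\langle\phi x,y\rangle$.

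The one point deserving care is this sign-and-scale mismatch: item $(4)$ reads $d\eta(x,y)=-\langle\phi x,y\rangle=-\Phi(x,y)$, rather than $d\eta=2\Phi$. I would resolve it exactly as in the discussion preceding the statement, namely by passing to the normalized quadruple $\pint'=\tfrac14\pint$, $\phi'=\phi$, $\eta'=-\tfrac12\eta$, $\xi'=-2\xi$ and checking that it satisfies the genuine axioms. This verification is routine bookkeeping: $\eta'(\xi')=\eta(\xi)=1$; the factor $\tfrac14$ in the metric and the factor $-\tfrac12$ in $\eta$ cancel correctly in item $(2)$; normality is preserved since $N_{\phi'}=N_\phi$ and $d\eta'\otimes\xi'=d\eta\otimes\xi$; and using $\Phi'=\tfrac14\Phi$ one computes $d\eta'=-\tfrac12 d\eta=\tfrac12\Phi=2\Phi'$, which is precisely $d\eta'=2\Phi'$. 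Alternatively, under the convention adopted in the text that a Sasakian structure on a Lie algebra is one satisfying the relations of Proposition \ref{casi_sasakiana}, the corollary becomes an immediate restatement.

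I expect no real obstacle here. All the substantive work—in particular the use of integrability ($N_J=0$) and of the Vaisman identities from Proposition \ref{vaisman-properties} needed to establish items $(3)$ and $(4)$—has already been absorbed into Proposition \ref{casi_sasakiana}. The corollary is essentially a packaging statement, and the only thing to watch is the consistent tracking of signs and scalar factors in the rescaling.
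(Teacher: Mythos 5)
Your proposal is correct and follows exactly the paper's route: the corollary is read off as a restatement of Proposition \ref{casi_sasakiana}, with the sign-and-scale discrepancy in $d\eta$ handled by the rescaling $\pint'=\tfrac14\pint$, $\eta'=-\tfrac12\eta$, $\xi'=-2\xi$ (or absorbed by the paper's stated convention on what ``Sasakian'' means for Lie algebras). Your explicit verification that $d\eta'=2\Phi'$ and that normality is preserved is the same routine check the paper leaves to the reader, and it is carried out correctly.
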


\medskip

Conversely, let $\h$ be a Lie algebra equipped with a Sasakian structure $(\pint, \phi, \eta, \xi)$. Taking into account Propositions \ref{adA-antisim} and 
\ref{vaisman-properties} 
we define the Lie algebra $\g=\R A\ltimes_D\h$ where $D$ is a skew-symmetric derivation of $\h$ such that $D(\xi)=0$ and $D\phi=\phi D$ on $\ker\eta$. We consider 
on $\g$ the almost complex structure $J$ given by $J|_{\ker\eta}:=\phi|_{\ker\eta}$, $JA=\xi$, and we extend $\pint$ to an inner product on $\g$ such that $A$ is orthogonal to 
$\h$ and $|A|=1$. Note that $(J,\pint)$ is an almost hermitian structure on $\g$. It is easy to prove that $(J,\pint)$ is in fact a Vaisman structure on $\g$.

\

From now on, we assume that $\g$ is solvable and unimodular (this is a necessary condition for the associated simply connected Lie group to admit lattices, according to 
\cite{Mi}). 
The next step in order to characterize the Lie algebras admitting 
Vaisman structures is to prove that $JA$ is a central element of $\g$. Moreover, the dimension of $\z(\g)$, the center of $\g$, is at most 2.

\begin{teo}\label{JA-central}
Let $\g$ be a unimodular solvable Lie algebra equipped with a Vaisman structure $(J,\pint)$. Then $JA\in\mathfrak{z}(\g)$. Moreover, $\mathfrak{z}(\g)\subset\text{span}\{A,JA\}$.
\end{teo}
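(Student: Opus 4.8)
The plan is to establish the two statements separately, in each case combining the three hypotheses---unimodular, solvable, and Vaisman---which each contribute one essential ingredient. For the first statement I would show directly that $\ad_{JA}=0$. Because $\g$ is unimodular, Proposition \ref{JA-conm} gives $JA\in\g'$. Because the structure is Vaisman, Proposition \ref{vaisman-properties}(4) tells us that $\ad_{JA}$ is skew-symmetric for $\pint$. Because $\g$ is solvable, Cartan's solvability criterion gives $\tr(\ad_x\ad_y)=0$ for all $x\in\g$ and $y\in\g'$; applying this with $x=y=JA$ yields $\tr(\ad_{JA}^2)=0$. For a skew-symmetric endomorphism $S$ of a Euclidean space, $S^2$ is symmetric and negative semidefinite, so $\tr(S^2)\le 0$ with equality precisely when $S=0$. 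Hence $\ad_{JA}=0$, that is, $JA\in\z(\g)$.

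For the second statement, let $z\in\z(\g)$ and decompose it as $z=cA+a\,JA+w$ with $c,a\in\R$ and $w\in W$; the goal is to prove $w=0$. I would first record that $\ad_A$ preserves $W$: indeed $\ad_A$ is skew-symmetric (Proposition \ref{adA-antisim}) and annihilates both $A$ and $JA$ (the latter by Proposition \ref{vaisman-properties}(1)), so it maps $W=(\text{span}\{A,JA\})^\perp$ into itself, and in particular $\ad_A(x)\in W=\ker\eta$ for every $x\in W$. Using $JA\in\z(\g)$ from the first part, the centrality of $z$ evaluated on an arbitrary $x\in W$ gives $0=[z,x]=c\,\ad_A(x)+[w,x]$, that is, $[w,x]=-c\,\ad_A(x)$. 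Now I would apply $\eta$ to both sides. On the left, $d\eta(u,v)=-\eta([u,v])$ together with Proposition \ref{casi_sasakiana}(4) gives $\eta([w,x])=\la\phi w,x\ra=\la Jw,x\ra$, since $\phi|_W=J|_W$; on the right $\eta(\ad_A(x))=0$ because $\ad_A(x)\in\ker\eta$. Therefore $\la Jw,x\ra=0$ for all $x\in W$. Since $J$ is orthogonal and preserves $\text{span}\{A,JA\}$, it preserves $W$, so $Jw\in W$; taking $x=Jw$ forces $Jw=0$ and hence $w=0$. Thus $z\in\text{span}\{A,JA\}$.

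The inputs I rely on---skew-symmetry of $\ad_A$ and $\ad_{JA}$, the bracket $[A,JA]=0$, and the Sasakian identity of Proposition \ref{casi_sasakiana}---are all available from earlier results, so the first statement is essentially immediate once one notices that the three hypotheses supply exactly $JA\in\g'$, $\tr(\ad_{JA}^2)=0$, and skew-symmetry. The step I expect to demand the most care is the second one: one must set up the decomposition $z=cA+a\,JA+w$ correctly and verify that both $\ad_A$ and $J$ stabilize $W$, so that pairing the centrality relation $[w,x]=-c\,\ad_A(x)$ against $\xi=JA$ through $\eta$ isolates precisely the term $\la Jw,x\ra$, with the $\ad_A$-contribution dropping out. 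That cancellation is what ultimately pins down $w=0$.
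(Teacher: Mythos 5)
Your proof is correct and follows essentially the same route as the paper's: unimodularity puts $JA$ in $\g'$, skew-symmetry of $\ad_{JA}$ together with solvability forces $\ad_{JA}=0$ (the paper argues via nilpotency of $\ad_{JA}$, since $\g'$ is a nilpotent ideal, rather than via Cartan's criterion, but the two are interchangeable here), and the center is pinned down by computing the $JA$-component of a bracket in two ways using Proposition \ref{casi_sasakiana}(4). In the second step the paper specializes at once to the test vector $Jz'$ and uses skew-symmetry of $\ad_{z'}$ where you use $\ad_A(W)\subseteq\ker\eta$, but this amounts to the same computation.
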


\begin{proof}
It follows from Proposition \ref{JA-conm} that $JA\in\g'$. As $\g$ is solvable, it follows that $\g'$ is nilpotent and hence $\ad_{JA}:\g\to\g$ is a nilpotent endomorphism. On the 
other hand, we know that $\ad_{JA}$ is skew-symmetric, according to Proposition \ref{vaisman-properties}, and therefore $\ad_{JA}=0$, that is, $JA\in\z(\g)$.

Now we will see that $\z(\g)\subset\{A,JA\}$. For $z\in\z(\g)$, we may assume that $z=aA+z'$ with $a\in\R$ and $z'\in W$, since $JA\in\z(\g)$. 
We have that $0=\ad_z=a\ad_A+\ad_{z'}$, and it follows from Proposition \ref{adA-antisim} that $\ad_{z'}$ is a skew-symmetric endomorphism of $\g$.
If $[z',Jz']=cJA + u$ for some $c\in\R, u\in W$, then $c=\la [z',Jz'], JA\ra=-\la Jz', [z',JA]\ra=0$.
On the other hand, taking into account  Proposition \ref{casi_sasakiana}, we have that 
\begin{align*}
c &=\la [z',Jz'], JA\ra =\eta([z',Jz'])\\
  &= -d\eta(z',Jz') = \la Jz',Jz'\ra \\
  &=|z'|^2,
\end{align*}
Therefore $z'=0$, and then $z=aA$. 
\end{proof}

\smallskip

\begin{cor}\label{sasaki-centro}
Any unimodular solvable Lie algebra admitting a Sasakian structure has non trivial center.
\end{cor}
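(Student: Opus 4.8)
The plan is to promote the Sasakian structure to a Vaisman structure on a slightly larger Lie algebra and then invoke Theorem \ref{JA-central}. Let $(\pint,\phi,\eta,\xi)$ be a Sasakian structure on a unimodular solvable Lie algebra $\h$. I would apply the converse construction described just above with the simplest admissible derivation, namely $D=0$: since the zero map is trivially skew-symmetric, annihilates $\xi$, and commutes with $\phi$ on $\ker\eta$, it satisfies all the required hypotheses, and the resulting Lie algebra is the direct product $\g=\R A\times\h$. By that construction, $\g$ carries a Vaisman structure $(J,\pint)$ with $JA=\xi$ and $\ker\theta=\h$.

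Next I would verify that $\g$ inherits the hypotheses of Theorem \ref{JA-central}. Solvability is immediate, since $\g=\R A\times\h$ is a direct product of an abelian Lie algebra and the solvable Lie algebra $\h$. For unimodularity, note that $A$ is central in $\g$, so $\ad_A^\g=0$, while for $x\in\h$ the operator $\ad_x^\g$ acts trivially on $\R A$ and as $\ad_x^\h$ on $\h$; hence $\tr(\ad_x^\g)=\tr(\ad_x^\h)=0$ because $\h$ is unimodular. Thus $\g$ is a unimodular solvable Vaisman Lie algebra.

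Finally, Theorem \ref{JA-central} yields $JA=\xi\in\z(\g)$. Since $\h$ is a subalgebra of $\g$ whose bracket is the restriction of the bracket of $\g$, the element $\xi\in\h$ commutes in particular with every element of $\h$, so $\xi\in\z(\h)$; moreover $\xi\neq 0$ because $|\xi|=1$. Therefore $\z(\h)$ is non trivial. The only real content lies in the passage from the bare Sasakian datum to a genuinely unimodular solvable Vaisman Lie algebra, which is handled by observing that the zero derivation is admissible and that the trivial extension preserves both unimodularity and solvability; once $\g$ is at hand, Theorem \ref{JA-central} does all the work.
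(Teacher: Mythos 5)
Your proposal is correct and follows essentially the same route as the paper: both pass to the semidirect product $\g=\R A\ltimes_D\h$ (the paper leaves $D$ as "a suitable skew-symmetric derivation", you take $D=0$), observe that $\g$ is a unimodular solvable Vaisman Lie algebra, and then apply Theorem \ref{JA-central} to conclude that $\xi=JA$ is a nonzero central element of $\h$. Your version merely spells out the verification of unimodularity and solvability that the paper dismisses as clear.
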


\begin{proof}
Let $\h$ be a unimodular solvable Lie algebra equipped with a Sasakian structure. Therefore, the semidirect product $\g=\R A\ltimes_D \h$, where $D$ is a suitable skew-symmetric derivation of 
$\h$, admits a Vaisman structure. It is clear that $\g$ is unimodular and solvable. It follows from Theorem \ref{JA-central} that $\h=\ker\theta$ has non trivial 
center.
\end{proof}

\

According to Corollary \ref{sasaki-centro} the Sasakian Lie algebra $\ker\theta$ has non trivial center, which is therefore generated by the Reeb vector $JA$, and $\ker\theta$ can 
be decomposed orthogonally as
\[ \ker\theta=\mathbb{R}JA \oplus W. \] 
For $x,y\in W $, it is easy to show that
\begin{equation}\label{corchetek}
[x,y]=\omega(x,y)JA + [x,y]_W,
\end{equation}
where $[x,y]_W \in W$ denotes the component of $[x,y]$ in $W$.
It follows from Proposition \ref{Sasakiana} that $(W,[\cdot,\cdot]_W,J|_W,\pint|_{W\times W})$ is a K\"ahler Lie algebra. We will denote by $\k$ the Lie algebra
$(W,[\cdot,\cdot]_W)$. We point out that $\k$ is not a Lie subalgebra from either $\ker\theta$ or $\g$, however, it is clear from \eqref{corchetek} that $\ker\theta$ is the 
central extension $\ker\theta=\k_{\omega'}(JA)$, where  $\omega'=\omega|_{\k\times\k}$ is the fundamental 2-form on $\k$. Note that $\omega'$ is closed since $(\k,J|_{\k\times\k},\pint|_{\k\times \k} )$ is K\"ahler. Moreover, if we 
denote $D:=\ad_A|_{\ker\theta}$, then $D$ is a skew-symmetric derivation of $\ker\theta$, according to Proposition \ref{adA-antisim}. Therefore, $\g$ can be decomposed 
orthogonally as 
\begin{equation}\label{extension_doble}
 \g=\mathbb{R}A\ltimes_D(\mathbb{R}JA \oplus_{\omega'}\k),
\end{equation}
thus $\g$ can be regarded as a double extension $\g=\k(D,\omega')$. Note that, according to Proposition \ref{vaisman-properties}, $D(JA)=0$.

\medskip

According to Lemma \ref{unimod}, $\k$ is unimodular, therefore $\k$ is a unimodular Lie algebra admitting a K\"ahler structure $(J|_\k,\pint|_\k)$. Due to a classical result from 
Hano \cite{Hano}, the metric $\pint|_\k$ is \textit{flat}. 

\medskip

This leads us to use the following terminology: A pair $(\k,\pint)$ of a Lie algebra $\k$ equipped with a flat metric $\pint$ will be called a flat Lie algebra. By abuse of 
notation, we will say simply sometimes that $\k$ is a \textit{flat Lie algebra}. Furthermore, if $(J,\pint)$ is a K\"ahler structure on a Lie algebra $\k$ such that $\pint$ is 
flat, then $(\k,J,\pint)$ (or simply $\k$) will be called a \textit{K\"ahler flat Lie algebra}. 

\medskip

Resuming our discussion on unimodular solvable Vaisman Lie algebras, we obtain that $\g$ is a double extension of a K\"ahler flat Lie algebra.

Let us denote $D':=D|_{\mathfrak k}$. Then it follows from Proposition \ref{vaisman-properties} that $D'$ commutes with $J|_{\mathfrak k}$ and therefore 
$D'\in\mathfrak{u}(\k,J|_{\k}, \pint|_\k)$. Furthermore, $D'$ is a derivation of $\k$. In fact, given $x,y\in\k$ we have that
\begin{align*} 
D'[x,y]_\k & = D([x,y]-\omega(x,y)JA)\\
    & =D[x,y]\\
    & =[Dx,y]+[x,Dy]\\
    & =[D'x,y]+[x,D'y]\\
    & =\omega(D'x,y)JA+[D'x,y]_\k+\omega(x,D'y)JA+[x,D'y]_\k\\
    & =[D'x,y]_\k+[x,D'y]_\k,
\end{align*}
since $\omega(D'x,y)=-\omega(x,D'y)$.

\medskip

Therefore we have associated to any unimodular solvable Vaisman Lie algebra a K\"ahler flat Lie algebra equipped with a skew-symmetric derivation which commutes with the complex 
structure. This is summarized in the following theorem.

\smallskip

\begin{teo}\label{Main Theorem-A}
Let $\g$ be a unimodular solvable Lie algebra equipped with a Vaisman structure $(J,\pint)$, with fundamental $2$-form $\omega$ and Lee form $\theta$. Then there exists a K\"ahler 
flat Lie algebra $\k$ such that $\ker\theta=\k_{\omega'}(JA)$ and $\g=\k(D,\omega')$, where $D:=\ad_A|_{\ker\theta}$ and $\omega':=\omega|_{\k\times\k}$ is the fundamental 
$2$-form of $\k$. Moreover, $D':=D|_\k$ is a skew-symmetric derivation of $\k$ that commutes with its complex structure.
\end{teo}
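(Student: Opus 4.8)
The plan is to assemble the structural facts established in the preceding discussion into the double-extension picture; no essentially new computation is needed, and the work lies in checking that the pieces fit together coherently. First I would invoke Theorem \ref{JA-central} to place $JA$ in $\z(\g)$ and to confine $\z(\g)$ to $\text{span}\{A,JA\}$. Combined with Corollary \ref{kernel}, which equips $\ker\theta$ with a Sasakian structure $(\pint|_{\ker\theta},\phi,\eta,\xi)$ with $\xi=JA$, and with Corollary \ref{sasaki-centro}, I conclude that $\ker\theta$ has nontrivial center generated by the Reeb vector $\xi=JA$. This is the hinge that lets me apply the Sasakian-to-K\"ahler reduction.

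Next I would fix $W=(\text{span}\{A,JA\})^\perp$ and record the orthogonal splitting $\ker\theta=\R JA\oplus W$. For $x,y\in W$ I would write $[x,y]=\omega(x,y)JA+[x,y]_W$ with $[x,y]_W\in W$, identifying the central component through $\la [x,y],JA\ra=\eta([x,y])=-d\eta(x,y)=\la\phi x,y\ra=\omega(x,y)$, which is exactly relation $(4)$ of Proposition \ref{casi_sasakiana} together with $\phi|_W=J|_W$. Proposition \ref{Sasakiana} then yields that $\k:=(W,[\cdot,\cdot]_W,J|_W,\pint|_{W\times W})$ is a K\"ahler Lie algebra, and the bracket formula displays $\ker\theta$ as the central extension $\k_{\omega'}(JA)$ with $\omega'=\omega|_{\k\times\k}$, a form that is closed precisely because $\k$ is K\"ahler.

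I would then handle the outer extension by $\R A$. Setting $D:=\ad_A|_{\ker\theta}$, I first note that $\ker\theta$ is an ideal (since $\g'\subset\ker\theta$), so $D$ maps $\ker\theta$ to itself; Proposition \ref{adA-antisim} makes $D$ skew-symmetric, and Proposition \ref{vaisman-properties}$(1)$ gives $D(JA)=[A,JA]=0$. A short check, $\la [A,x],JA\ra=-\la x,[A,JA]\ra=0$ for $x\in W$, shows moreover that $D$ preserves $W$, so $D':=D|_\k$ is a genuine endomorphism of $\k$. Being a derivation of $\ker\theta$ that fixes the central direction, $D$ exhibits $\g$ as the double extension $\g=\R A\ltimes_D(\R JA\oplus_{\omega'}\k)=\k(D,\omega')$. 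Unimodularity of $\g$ forces $\tr D=0$ by Lemma \ref{unimod}, hence $\k$ is unimodular; since $\k$ also carries a K\"ahler metric, Hano's classical theorem \cite{Hano} gives that $\pint|_\k$ is flat.

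The one point that I would treat as the main obstacle is verifying that $D'$ descends to a \emph{derivation of the reduced bracket} $[\cdot,\cdot]_\k=[\cdot,\cdot]_W$, since the central term $\omega(x,y)JA$ could in principle obstruct this. Here I would use that $D'$ commutes with $J|_\k$ (Proposition \ref{vaisman-properties}$(2)$) and is metric-skew-symmetric, whence it is $\omega$-skew, i.e. $\omega(D'x,y)=\la JD'x,y\ra=\la D'Jx,y\ra=-\la Jx,D'y\ra=-\omega(x,D'y)$. Then, using $D(JA)=0$ and $Dx=D'x$ for $x\in W$, one computes $D'[x,y]_\k=D[x,y]=[D'x,y]+[x,D'y]$, and splitting each term into its $\R JA$- and $W$-components, the two central contributions cancel by the $\omega$-skewness, leaving $D'[x,y]_\k=[D'x,y]_\k+[x,D'y]_\k$. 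This same computation simultaneously records that $D'\in\u(\k,J|_\k,\pint|_\k)$, completing the statement.
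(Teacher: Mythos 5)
Your proposal is correct and follows essentially the same route as the paper: $JA$ central via Theorem \ref{JA-central}, the Sasakian structure on $\ker\theta$ reduced to a K\"ahler $\k$ via Proposition \ref{Sasakiana}, unimodularity of $\k$ from Lemma \ref{unimod} combined with Hano's theorem for flatness, and the identical $\omega$-skewness cancellation to show $D'$ is a derivation of $[\cdot,\cdot]_\k$. The only difference is presentational: you spell out a few verifications (that $D$ preserves $W$, and the chain $\la[x,y],JA\ra=\eta([x,y])=-d\eta(x,y)=\omega(x,y)$) that the paper leaves implicit.
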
 

\

Next, we will prove the converse of Theorem \ref{Main Theorem-A}, namely, we show that beginning with a K\"ahler flat Lie algebra and a suitable derivation we are able to produce a 
Vaisman structure on a double extension of this Lie algebra.

\begin{teo}\label{Main Theorem-B}
 Let $(\k,J',\pint')$ be a K\"ahler flat Lie algebra with $\omega'$ its fundamental $2$-form and let $D'$ be a skew-symmetric derivation of $\k$ such that $J'D'=D'J'$. Let $D$ be 
the 
skew-symmetric derivation of the central extension $\k_{\omega'}(B)$ defined by: $D(B)=0$, $D|_{\k}=D'$. Then the double extension $\g:=\k(D,\omega')=\R A\ltimes_D 
\k_{\omega'}(B)$ 
admits a Vaisman structure $(J,\pint)$, where $JA=B$, $J|_\k=J'$, and $\pint$ extends $\pint'$ in the following way: $|A|=|B|=1$, $\la A,B\ra=0, \, \la A,\k\ra=\la B,\k\ra=0$. 
The Lie algebra $\g$ is unimodular and solvable, and the Lee form $\theta$ is the metric dual of $A$.
\end{teo}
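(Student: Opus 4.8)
The plan is to verify directly the four requirements of a Vaisman structure in the explicit orthogonal decomposition $\g=\R A\oplus\R B\oplus\k$, whose brackets are $[A,B]=0$, $[A,x]=D'x$, $[B,x]=0$ and $[x,y]=\omega'(x,y)B+[x,y]_\k$ for $x,y\in\k$. First I would record that $J$ is almost Hermitian: $J^2=-\I$ is immediate since $JB=-A$ and $J|_\k=J'$, while compatibility with $\pint$ holds because $J$ interchanges the orthonormal pair $A,B$ and restricts to the Hermitian $J'$ on $\k$. Writing $\theta$ and $\beta$ for the metric duals of $A$ and $B$, the fundamental form is then $\omega=\theta\wedge\beta+\omega'$, where $\omega'$ is extended to $\g$ by zero on $\R A\oplus\R B$.

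The first substantive step is integrability. I would compute $N_J$ on pairs drawn from $\{A,B\}\cup\k$. The pair $(A,B)$ vanishes at once from $[A,B]=0$ (that is, $D(B)=0$); the mixed pairs $N_J(A,x)$ and $N_J(B,x)$ with $x\in\k$ are where the hypothesis $J'D'=D'J'$ is essential, since it forces the two surviving terms to cancel; and $N_J(x,y)$ for $x,y\in\k$ collapses to $N_{J'}(x,y)$ once its $A$- and $B$-components are seen to vanish (the $A$-component equals $\omega'(J'x,y)+\omega'(x,J'y)$ and the $B$-component equals $\omega'(J'x,J'y)-\omega'(x,y)$, both zero by $J'$-invariance and compatibility of $\omega'$), hence is $0$ since $J'$ is integrable on $\k$.

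The second substantive step is the LCK identity. From $\g'\subset\k_{\omega'}(B)\perp A$ I get $d\theta=0$, so $\theta$ is closed; from the central-extension term $\omega'(x,y)B$ I get $d\beta=-\omega'$; and from $d\omega=d\theta\wedge\beta-\theta\wedge d\beta+d\omega'$ it remains to show $d\omega'=0$ on $\g$. This last computation is the crux: on three $\k$-vectors it reduces to the closedness of $\omega'$ on $\k$ (valid since $\k$ is K\"ahler), while on two $\k$-vectors and $A$ it reduces to $\omega'(D'x,y)+\omega'(x,D'y)=0$, which holds precisely because $D'$ is skew-symmetric and commutes with $J'$, i.e. $D'\in\u(\k)$; the remaining cases vanish trivially. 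Thus $d\omega=\theta\wedge\omega'=\theta\wedge\omega$, so $(J,\pint)$ is LCK with Lee form $\theta$, the metric dual of $A$.

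Finally, the Vaisman condition comes for free: $\ad_A$ equals $D$ on $\k_{\omega'}(B)$ and annihilates $A$, hence is skew-symmetric by construction, so Proposition~\ref{adA-antisim} upgrades the LCK structure to a Vaisman one. For the last assertions, $\k$ is unimodular and solvable because it carries a flat metric (Milnor); solvability passes to the central extension $\k_{\omega'}(B)$ and then to the semidirect product by $\R A$, giving $\g$ solvable, while Lemma~\ref{unimod} gives $\g$ unimodular once we note $\tr D=\tr D'=0$, as $D'$ is skew-symmetric. The main obstacle I anticipate is the bookkeeping in the integrability and $d\omega'=0$ computations, which is exactly where all three hypotheses --- $\omega'$ closed, $D'$ skew-symmetric, and $J'D'=D'J'$ --- are consumed.
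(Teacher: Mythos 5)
Your proposal is correct and follows essentially the same route as the paper: direct verification of the integrability of $J$ and of the identity $d\omega=\theta\wedge\omega$, followed by Proposition \ref{adA-antisim} to upgrade the LCK structure to a Vaisman one, with unimodularity and solvability obtained from Proposition \ref{gplana} and Lemma \ref{unimod}. The only (harmless) difference is organizational: you package the case-by-case computation of $d\omega$ through the decomposition $\omega=\theta\wedge\beta+\omega'$ together with $d\beta=-\omega'$, whereas the paper evaluates $d\omega$ directly on each type of triple of vectors.
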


\begin{proof}
Recall that the Lie bracket of $\g$ is given by: $\ad_A|_{\k}=D'$, $B\in\z(\g)$ and for any 
$x,y\in\k$, 
\[[x,y]=\omega'(x,y)B + [x,y]_\k,\]
where $\omega'(x,y)=\langle J'x,y\rangle'$ and $[\cdot,\cdot]_\k$ denotes the Lie bracket on $\k$. It is easy to see that $(J,\pint)$ in the statement is an almost Hermitian 
structure on $\g$, and we will call $\omega$ its K\"ahler $2$-form. Since $(\k,\pint')$ is flat then $\k$ is unimodular and solvable (see Remark \ref{k_unimodular} below), and it 
follows from Lemma \ref{unimod} that $\g$ is unimodular and solvable as well. 

\smallskip

Now we show that $J$ is a complex structure on $\g$. It is enough to show that $N_J(x,y)=0$ and $N_J(A,y)=0$ for all $x, y\in\k$. Firstly, 
\begin{align*} 
N_J(x,y) & = [Jx,Jy] -[x,y] -J([Jx,y] +[x,Jy])\\
    & = N_{J'}^{\mathfrak k}(x,y)+ \omega'(Jx,Jy)B-\omega'(x,y)B  - J(\omega'(x,Jy)B+\omega'(Jx,y)B)\\
    & = N_{J'}^{\mathfrak k}(x,y) + (-\la x,J'y\ra'-\la J'x,y\ra')B + (\la J'x,J'y\ra'-\la x,y\ra')A\\
    & = 0,
\end{align*}
for all $x, y\in\mathfrak k$ since $J'$ is a complex structure on $\k$, i.e. $N_{J'}^{\mathfrak k}=0$. Secondly,
\begin{align*} 
N_J(A,y) & = [JA,Jy] -[A,y] -J([JA,y] + [A,Jy])\\
    & = -Dy-JDJy\\
    & = -D'y-J'D'J'y\\
    & = 0,
\end{align*}
for any $y\in\k$ since $JA=B$ is central and $D'$ commutes with $J'$. Thus $J$ is integrable on $\g$.

\smallskip

The next step is to show that $(J,\pint)$ is LCK, to do this we have to verify that $d\omega=\theta\wedge\omega$ where $\omega$ is the fundamental form on $\g$ and $\theta$ is the 
dual $1$-form of the vector $A$.

\smallskip

If $x,y,z\in\k$, then 
\begin{align*} 
d\omega(x,y,z) & = -\omega([x,y],z)-\omega([y,z],x)-\omega([z,x],y)\\
    & = \la [x,y]_\mathfrak k +\omega'(x,y)B,Jz\ra+ \la [y,z]_\mathfrak k +\omega'(y,z)B,Jx\ra+ \la [z,x]_\mathfrak k +\omega'(z,x)B,Jy\ra\\
    & = \la [x,y]_\mathfrak k ,J'z\ra'+ \la [y,z]_\mathfrak k ,J'x\ra'+ \la [z,x]_\mathfrak k +,J'y\ra'\\
    & = d^\k\omega'(x,y,z)\\
    & = 0,
\end{align*}
where $d^\k$ denotes the differential on $\k$ and we have used that $d^\k\omega'=0$. On the other hand, $\theta\wedge\omega(x,y,z)=0$ since $\mathfrak k\subset \ker\theta$.

\smallskip

If $y,z\in\k$, then 
\begin{align*} 
d\omega(B,y,z) & = -\omega([B,y],z)-\omega([y,z],B)-\omega([z,B],y)\\
    & = \la [y,z]_\mathfrak k +\omega'(y,z)B,JB\ra \\
    & = 0.
\end{align*}
On the other hand, $\theta\wedge\omega(B,y,z)=0$ since $\R B\oplus\mathfrak k = \ker\theta.$

If $y,z\in\k$, then
\begin{align*} 
d\omega(A,y,z) & = -\omega([A,y],z)-\omega([y,z],A)-\omega([z,A],y)\\
    & = \la Dy,Jz\ra+ \la \omega'(y,z)B+[y,z]_\k ,JA\ra+ \la -Dz,Jy\ra\\
    & = \la D'y,J'z\ra' + \omega'(y,z) -\la D'z,J'y\ra'\\
    & = \omega'(y,z),
\end{align*} 
since $D'$ is skew-symmetric and commutes with $J'$. On the other hand, $\theta\wedge\omega(A,y,z)=\omega(y,z)=\omega'(y,z)$.

If $z\in\k$, then 
\begin{align*} 
d\omega(A,B,z) & = -\omega([A,B],z)-\omega([B,z],A)-\omega([z,A],B)\\
    & = \la D'z,A\ra' \\
    & = 0,
\end{align*} 
since $B$ is central and $D'z\in\k$.
On the other hand, $\theta\wedge\omega(A,B,z)=\omega(B,z)=0$.

Thus we have that $(J,\pint)$ is an LCK structure on $\g$. Moreover, $(J,\pint)$ is Vaisman since $\ad_A=D$ is a skew-symmetric endomorphism of $\g$ (Proposition 
\ref{adA-antisim}).
\end{proof}

\medskip

As a by-product of this analysis, we obtain the following stronger version of Corollary \ref{sasaki-centro}:

\begin{cor}\label{sasaki-centro2}
 Any unimodular solvable Lie algebra admitting a Sasakian structure is a central extension of a K\"ahler flat Lie algebra.
\end{cor}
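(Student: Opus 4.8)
The plan is to reuse the Vaisman Lie algebra constructed in the proof of Corollary \ref{sasaki-centro} and then read off the conclusion from Theorem \ref{Main Theorem-A}. Let $\h$ be a unimodular solvable Lie algebra with a Sasakian structure $(\pint,\phi,\eta,\xi)$. As in that proof, I would form the semidirect product $\g=\R A\ltimes_D\h$ for a skew-symmetric derivation $D$ of $\h$ with $D\xi=0$ and $D\phi=\phi D$ on $\ker\eta$ (the zero derivation always qualifies), so that $\g$ carries a Vaisman structure $(J,\pint)$ with $JA=\xi$. Since $\tr D=0$, Lemma \ref{unimod} guarantees that $\g$ is again unimodular and solvable.

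With $\g$ in hand I would simply feed it into Theorem \ref{Main Theorem-A}, which yields a K\"ahler flat Lie algebra $\k$ with $\ker\theta=\k_{\omega'}(JA)$, where $\theta$ is the Lee form and $\omega'=\omega|_{\k\times\k}$. As already observed in the proof of Corollary \ref{sasaki-centro}, here $\ker\theta=\h$, since $A$ is orthogonal to $\h$ and $\theta$ is the metric dual of $A$. Substituting this identification gives $\h=\k_{\omega'}(\xi)$, which is precisely a central extension of the K\"ahler flat Lie algebra $\k$ by the closed $2$-form $\omega'$, as claimed.

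Alternatively, one may avoid the auxiliary $\g$ altogether: by Corollary \ref{sasaki-centro} and the Remark following Proposition \ref{Sasakiana} the center is $\z(\h)=\R\xi$, Proposition \ref{Sasakiana} endows $\k:=\ker\eta$ with a K\"ahler structure, and the Sasakian bracket reads $[x,y]=\Phi(x,y)\xi+[x,y]_\k$ for $x,y\in\k$, so that $\h=\k_\beta(\xi)$ with $\beta=\Phi|_{\k\times\k}$, which is closed because $\k$ is K\"ahler. In either route the only substantive point, and hence the main obstacle, is the \emph{flatness} of $\k$: this requires $\k$ to be unimodular before Hano's theorem \cite{Hano} can be invoked. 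The construction route obtains this for free, as it is built into Theorem \ref{Main Theorem-A} via Lemma \ref{unimod}; the direct route requires the short verification that $\tr(\ad_x^\h)=\tr(\ad_x^\k)$ for every $x\in\k$ (using $\ad_x^\h\xi=0$), whence unimodularity of $\h$ forces $\k$ to be unimodular, and flatness follows.
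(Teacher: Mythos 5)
Your first route is exactly the paper's argument: the corollary is stated there as a by-product of the analysis following Corollary \ref{sasaki-centro}, namely embedding the Sasakian algebra as $\ker\theta$ inside the Vaisman double extension and reading off $\ker\theta=\k_{\omega'}(JA)$ with $\k$ K\"ahler flat via Lemma \ref{unimod} and Hano's theorem. The proposal is correct, and your direct alternative is just an unwinding of the same computations, so no further comment is needed.
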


\

It follows from Theorems \ref{Main Theorem-A} and \ref{Main Theorem-B} that there is a one-to-one correspondence between unimodular solvable Lie algebras equipped with a Vaisman 
structure and pairs $(\k, D')$ where $\k$ is a K\"ahler flat Lie algebra and $D'$ is a skew-symmetric derivation of $\k$ which commutes with the complex structure. Moreover, 
Theorem \ref{Main Theorem-B} provides a way to construct all unimodular solvable Lie algebras carrying Vaisman metrics. In order to do this, we need a better understanding of the 
derivations of K\"ahler flat Lie algebras, and we pursue this in the following section.

\

\section{Derivations of K\"ahler flat Lie algebras}\label{kahler-flat}

Let us recall the following result which describes the structure of any Lie algebra equipped with a flat metric. The original version was proved by Milnor in \cite{Mi}, and it 
was later refined in \cite{BDF} (compare with \cite{Al}). 

\begin{prop}[\cite{Mi}, \cite{BDF}]\label{gplana}
	Let $(\k,\pint)$ be a flat Lie algebra. Then $\k$ decomposes orthogonally as $\k=\z\oplus\h\oplus\k'$ (direct sum of vector spaces) where $\z$ is the center of 
$\k$ and the following properties are satisfied: 
	\begin{enumerate}[(a)]
		\item $\k'=[\k,\k]$ and $\h$ are abelian.
		\item $\ad :\h\to\mathfrak{so(k')}$ is injective and $\k'$ is even dimensional. In particular, $\dim \h\leq \frac{\dim \k'}{2}$.
		\item $\ad_x=\nabla_x$ for any $x\in\z\oplus\h$.
		\item $\nabla_x=0$ if and only if $x\in\z\oplus\k'$.
	\end{enumerate}
\end{prop}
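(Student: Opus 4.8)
The plan is to realize everything through the Levi--Civita connection of $\pint$, regarded as a bilinear operation on $\k$. First I would invoke the Koszul formula recalled in the proof of Proposition \ref{JA-conm} to define the operators $L_x:=\nabla_x\in\operatorname{End}(\k)$. Two facts come for free. Since $\nabla$ is a metric connection and the functions $\langle y,z\rangle$ are constant, $0=\langle\nabla_x y,z\rangle+\langle y,\nabla_x z\rangle$, so each $L_x$ is \emph{skew-symmetric}, i.e.\ $L_x\in\mathfrak{so}(\k)$; and since $\nabla$ is torsion-free, $[x,y]=L_xy-L_yx$ for all $x,y\in\k$. Flatness is the statement that the curvature $R(x,y)=[L_x,L_y]-L_{[x,y]}$ vanishes, so the third and decisive ingredient is that
\[
 L\colon\k\longrightarrow\mathfrak{so}(\k),\qquad x\longmapsto L_x,
\]
is a homomorphism of Lie algebras, $L_{[x,y]}=[L_x,L_y]$.

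The crux is the purely algebraic lemma, due to Milnor, that the image $\mathfrak{s}:=L(\k)$ is \emph{abelian}, equivalently $\nabla_{[x,y]}=0$ for all $x,y$, i.e.\ $\k'\subseteq\ker L$. Here one uses that $\mathfrak s$, as a subalgebra of $\mathfrak{so}(\k)$, is the Lie algebra of a compact group, hence reductive, and one shows that the torsion identity $[x,y]=L_xy-L_yx$ (combined with skew-symmetry and the homomorphism property) forces the semisimple part of $\mathfrak s$ to vanish, leaving $L_{[x,y]}=0$. I expect this step to be the main obstacle: it is exactly where the flat hypothesis (and not merely the metric one) is used in an essential way, and the bookkeeping that combines the three structural identities is the delicate content isolated in the cited references \cite{Mi,BDF}; everything afterwards is linear algebra of commuting skew-symmetric operators.

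Granting the abelian image, I would set $\mathfrak u:=\ker L$. It is an ideal (kernel of the homomorphism $L$), it contains $\k'=[\k,\k]$ (abelian image $\Rightarrow L_{[x,y]}=0$), and it is abelian because $[u,v]=L_uv-L_vu=0$ for $u,v\in\mathfrak u$; this already yields the abelianness of $\k'$ in (a). Let $\h:=\mathfrak u^\perp$. For $u\in\mathfrak u$ and $b\in\h$ one has $L_bu=\nabla_bu=[b,u]\in\mathfrak u$ (using $\nabla_u=0$ and that $\mathfrak u$ is an ideal), so $L_b$ preserves $\mathfrak u$ and hence also $\h$; combined with $[\h,\h]\subseteq\k'\subseteq\mathfrak u$ this gives $[\h,\h]\subseteq\h\cap\mathfrak u=0$, so $\h$ is an abelian subalgebra, proving (a). Moreover the trilinear form $(b,b',b'')\mapsto\langle L_bb',b''\rangle$ on $\h$ is symmetric in the first two slots (torsion) and antisymmetric in the last two (skew-symmetry), hence vanishes, so $L_b=0$ on $\h$ and therefore $\nabla_b=\ad_b$ for all $b\in\h$, which is the noncentral part of (c).

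Finally I would carry out the refinement separating the centre. Writing $z=b+u$ with $b\in\h$, $u\in\mathfrak u$, the condition $\ad_z=0$ forces $\ad_b|_{\mathfrak u}=0$ and hence $b\in\ker L\cap\h=0$, so $\z=\z(\k)\subseteq\mathfrak u$; explicitly $\z=\bigcap_{b\in\h}\ker(\ad_b|_{\mathfrak u})$ while $\k'=\sum_{b\in\h}\operatorname{im}(\ad_b|_{\mathfrak u})$. Since the operators $\ad_b|_{\mathfrak u}$ are skew-symmetric, the common kernel is the orthogonal complement of the sum of images, giving the orthogonal splitting $\mathfrak u=\z\oplus\k'=\ker\nabla$, which is (d) and completes (c). For (b), the family $\{\ad_b|_{\k'}:b\in\h\}$ consists of commuting skew-symmetric endomorphisms of $\k'$ with trivial common kernel, so it simultaneously block-diagonalizes into $2\times 2$ rotation blocks; this forces $\dim\k'$ to be even, shows $\ad\colon\h\to\mathfrak{so}(\k')$ is injective (an element acting trivially on $\k'$ would be central, hence in $\z\cap\h=0$), and bounds $\dim\h$ by the rank $\tfrac12\dim\k'$ of a maximal abelian subalgebra of $\mathfrak{so}(\k')$.
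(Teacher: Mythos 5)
The paper offers no proof of this proposition: it is imported verbatim from Milnor \cite{Mi} with the refinement of \cite{BDF}, so there is no in-text argument to measure yours against. Judged on its own terms, your reconstruction has the right architecture, and every step you actually write out is correct: $L=\nabla$ is a skew-symmetric Lie algebra homomorphism into $\mathfrak{so}(\k)$, its kernel $\mathfrak{u}$ is an abelian ideal whose orthogonal complement $\h$ satisfies $\nabla_b=\ad_b$, the trilinear-form argument kills $L_b|_{\h}$, and the further splitting $\mathfrak{u}=\z\oplus^{\perp}\k'$ via $\ker(\ad_b|_{\mathfrak{u}})=(\operatorname{im}\ad_b|_{\mathfrak{u}})^{\perp}$ — which is precisely the content \cite{BDF} adds to Milnor's statement — together with the simultaneous block-diagonalization giving (b), all check out. (One small imprecision: maximal abelian subalgebras of $\mathfrak{so}(2n)$ can have dimension larger than $n$; the bound $\dim\h\le\tfrac12\dim\k'$ comes from your operators being commuting \emph{skew-symmetric} maps, hence lying in a common maximal torus and being determined by their $n$ rotation angles — which your block-diagonalization already delivers, so the conclusion stands.)

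The genuine gap is the one you flag yourself: the abelianness of $L(\k)$, i.e.\ $\k'\subseteq\ker\nabla$. Everything downstream leans on it — in particular the symmetry of $\langle L_bb',b''\rangle$ in its first two slots only holds because $[\h,\h]=0$, which you obtain from $[\h,\h]\subseteq\k'\subseteq\mathfrak{u}$ — and the sentence ``the torsion identity forces the semisimple part of $\mathfrak{s}$ to vanish'' states the goal rather than achieving it. Reductivity of a subalgebra of $\mathfrak{so}(\k)$ is free, but actually ruling out a nonzero compact semisimple summand (for instance by showing that $\h\cong L(\k)$ inherits a flat left-invariant metric and that a compact semisimple algebra cannot carry one) is the substantive content of Milnor's Theorem 1.5, and your one-line indication would not survive being written out. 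So as a self-contained proof the proposal is incomplete at exactly that point — though admittedly no more incomplete than the paper, which cites the entire statement.
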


\begin{rem}\label{k_unimodular}
	It follows easily from Proposition \ref{gplana} that $\k$ is a unimodular solvable Lie algebra, whose nilradical is given by $\z\oplus \k'$.
\end{rem}


\medskip

We will use this proposition in order to better describe K\"ahler flat Lie algebras. First we give necessary and sufficient conditions for an almost complex structure on a flat 
Lie 
algebra to be K\"ahler (cf. \cite{Lic}).

\begin{prop}\label{Jkahler-sii}
Let $(\k,\pint)$ be a flat Lie algebra and let $J$ be an almost complex structure on 
$\k$, compatible with $\pint$. Then $J$ is K\"ahler if and only if the following two 
properties are satisfied:
\begin{enumerate}[(i)]
 \item $\mathfrak{z}\oplus\mathfrak{h}$ and $\k'$ are $J$-invariant.
 \item $\ad_H\circ J=J\circ\ad_H$, for any $H\in\h$.
\end{enumerate}
\end{prop}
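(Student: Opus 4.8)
The plan is to reduce the Kähler condition to the parallelism $\nabla J=0$ and then to read off its meaning from the explicit description of the Levi-Civita connection supplied by Proposition \ref{gplana}. Recall that a compatible almost complex structure $J$ on a Riemannian Lie algebra is Kähler precisely when it is parallel, i.e. $\nabla J=0$; here $\nabla$ is the Levi-Civita connection determined by the Koszul formula recalled in the proof of Proposition \ref{JA-conm}. Thus the whole statement will follow once I control the endomorphism $(\nabla_x J)=\nabla_x\circ J-J\circ\nabla_x$ for every $x\in\k$.

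First I would identify $\nabla_x$. Decomposing $x=z+H+x'$ with $z\in\z$, $H\in\h$, $x'\in\k'$ as in Proposition \ref{gplana}, parts (c) and (d) give $\nabla_z=0$ (since $z$ is central), $\nabla_{x'}=0$, and $\nabla_H=\ad_H$; hence $\nabla_x=\ad_H$ depends only on the $\h$-component of $x$. Because $\z$ is central and $\h,\k'$ are abelian with $\k'$ an ideal, $\ad_H$ kills $\z\oplus\h$ and maps $\k'$ skew-symmetrically into itself, so $\ad_H(\k)\subseteq\k'$. Consequently $(\nabla_x J)=[\ad_H,J]$ for all $x$, and therefore $\nabla J=0$ is equivalent to $\ad_H\circ J=J\circ\ad_H$ for every $H\in\h$, which is exactly condition (ii).

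Next I would show that (ii) already forces the $J$-invariance asserted in (i). The bracket relations above give $\k'=[\k,\k]=[\h,\k']$, so $\k'$ is spanned by elements $\ad_H x'=[H,x']$ with $H\in\h$, $x'\in\k'$. Applying (ii), $J(\ad_H x')=\ad_H(Jx')\in\ad_H(\k)\subseteq\k'$, whence $\k'$ is $J$-invariant; and since $J$ is skew-symmetric and $\z\oplus\h=(\k')^\perp$, the complement $\z\oplus\h$ is $J$-invariant too. This yields (i). Assembling the two directions: if $J$ is Kähler then $\nabla J=0$, so (ii) holds, and then (i) follows; conversely (ii) gives $\nabla J=0$, hence $J$ is Kähler. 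This proves the equivalence.

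The point to get right---rather than a genuine obstacle---is the bookkeeping for $\nabla_x$: one must combine (c) and (d) of Proposition \ref{gplana} to see simultaneously that $\nabla_x$ depends only on $H$, equals $\ad_H$, and has image inside $\k'$. After that, the equivalence with (ii) is immediate and the derivation of (i) from (ii) is a short argument using the structure of $[\k,\k]$ together with the orthogonality of $J$. It is worth noting that (i) is logically redundant given (ii); stating it separately simply records the $J$-invariant splitting that will be convenient in the sequel.
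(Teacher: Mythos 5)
Your proof is correct and follows essentially the same route as the paper: both reduce the K\"ahler condition to $\nabla J=0$ and read it off from Proposition \ref{gplana}, and your derivation of the $J$-invariance of $\k'$ from $\k'=[\h,\k']$ is the same computation the paper performs in its forward direction. Your additional observation that $\nabla_x=\ad_H$ depends only on the $\h$-component of $x$, so that condition (ii) alone is equivalent to the K\"ahler condition and (i) is a formal consequence of (ii), is a correct refinement that the paper does not make explicit.
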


\begin{proof}
Assume first that $J$ is K\"ahler, that is, $\nabla J=0$, or equivalently $\nabla_x J=J \nabla_x$ for any $x\in \k$.

For $x\in\mathfrak{k'}$, it follows from Proposition \ref{gplana} that $x=\displaystyle\sum_i [H_i,y_i]$ for some $H_i\in\mathfrak{h}$ and $y_i\in\mathfrak{k'}$. Then
\[Jx=\displaystyle\sum_i J[H_i,y_i]=\displaystyle\sum_i J\nabla_{H_i}y_i=\displaystyle\sum_i \nabla_{H_i}Jy_i= \displaystyle\sum_i [H_i,Jy_i]\in\mathfrak{k'},\]
where we have used Proposition \ref{gplana}(c). Therefore $\k'$ is $J$-invariant. Thus $J$ also preserves the orthogonal complement of $\mathfrak{k'}$, that is, 
$\mathfrak{z}\oplus\mathfrak{h}$ is $J$-invariant, and this proves $\ri$. Finally $\rii$ follows from the fact that $\nabla_H J=J \nabla_H$ and $\nabla_H=\ad_H$ for any 
$H\in\mathfrak{h}$.

\smallskip

If we assume now that $\ri$ and $\rii$ hold then it follows easily from Proposition \ref{gplana} that $\nabla_x J=J \nabla_x$ for any $x\in \k$, i.e. $J$ is K\"ahler.
\end{proof}

\medskip

\begin{rem}
Given any even-dimensional flat Lie algebra, it is easy to define an almost complex structure which satisfies the conditions of Proposition \ref{Jkahler-sii}, thus it becomes a 
K\"ahler flat Lie algebra (see \cite{Lic,BDF}).
\end{rem}

\

Our next aim is to study unitary derivations of a K\"ahler flat Lie algebra, i.e. skew-symmetric derivations which commute with the complex structure. In order to do so, we prove the 
following lemma.

\begin{lema}\label{Dh0}
Let $(\k,\pint)$ be a flat Lie algebra with decomposition $\k=\z\oplus\h\oplus\k'$ as in Proposition \ref{gplana}. If $D$ is a skew-symmetric derivation of $\k$, then $D(\h)=0$.
\end{lema}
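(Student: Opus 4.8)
The plan is to exploit three structural facts from Proposition \ref{gplana}: that $\h$ and $\k'$ are abelian, that $\z$ is the center, and crucially that $\ad\colon\h\to\mathfrak{so}(\k')$ is injective. Here $\ad_H$ annihilates $\z\oplus\h$, since $\z$ is central and $\h$ is abelian, and it preserves the ideal $\k'$, where by Proposition \ref{gplana}(b) it acts skew-symmetrically; so $\rho:=\ad|_\h\colon\h\to\mathfrak{so}(\k')$ is a well-defined injective linear map.

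First I would show that $D$ stabilizes the summands that matter. The center $\z$ and the derived ideal $\k'=[\k,\k]$ are characteristic, so $D(\z)\subseteq\z$ and $D(\k')\subseteq\k'$ for any derivation $D$: the first holds because $[Dz,x]=D[z,x]-[z,Dx]=0$ for $z\in\z$, forcing $Dz\in\z$, and the second is immediate from the Leibniz rule. Since $D$ is skew-symmetric and $\z\oplus\k'$ is $D$-invariant, its orthogonal complement $\h=(\z\oplus\k')^\perp$ is $D$-invariant as well, hence $D(\h)\subseteq\h$. In particular $S:=D|_{\k'}$ is a skew-symmetric endomorphism of $\k'$.

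Next I would translate the derivation condition into $\mathfrak{so}(\k')$. The general identity $\ad_{Dx}=[D,\ad_x]$, valid for any derivation, restricted to the subspace $\k'$ (which is invariant under both $D$ and $\ad_H$), reads
\[ \rho(DH)=[S,\rho(H)],\qquad H\in\h, \]
which makes sense precisely because $DH\in\h$. The key step is then a trace computation: for $H,H'\in\h$,
\[ \tr\big(\rho(DH)\,\rho(H')\big)=\tr\big([S,\rho(H)]\,\rho(H')\big)=\tr\big(S\,[\rho(H),\rho(H')]\big)=0, \]
the middle equality using cyclicity of the trace and the last using that $\rho(\h)$ is abelian (because $\h$ is, so $[\rho(H),\rho(H')]=\rho([H,H'])=0$). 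Specializing $H'=DH$ gives $\tr(\rho(DH)^2)=0$; but $\rho(DH)$ is a real skew-symmetric operator, so $\tr(\rho(DH)^2)<0$ unless $\rho(DH)=0$. Hence $\rho(DH)=0$, and injectivity of $\rho$ yields $DH=0$.

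The heart of the argument, and the only nonroutine point, is recognizing that the commutator relation $\rho(DH)=[S,\rho(H)]$ together with the abelian-ness of $\rho(\h)$ forces the self-pairing $\tr(\rho(DH)^2)$ to vanish, after which the skew-symmetry of elements of $\mathfrak{so}(\k')$ converts a vanishing trace of a square into the vanishing of the operator itself. Everything else—the $D$-invariance of $\z$, $\k'$ and $\h$, and the derivation identity—is standard.
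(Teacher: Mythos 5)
Your argument is correct, and it reaches the conclusion by a genuinely different mechanism than the paper. Both proofs begin identically (the characteristic subspaces $\z$ and $\k'$ are $D$-invariant, so skew-symmetry forces $D(\h)\subseteq\h$) and both end by invoking the injectivity of $\ad\colon\h\to\mathfrak{so}(\k')$ from Proposition \ref{gplana}(b); the difference is in the middle. The paper puts the commuting family $\{\ad_H|_{\k'}\}$ into simultaneous $2\times 2$ block form on an orthonormal basis $\{e_i,f_i\}$ of $\k'$, expands $D[H,e_i]$ in two ways, and reads off $\lambda_i(DH)=0$ from the $f_i$-component. You instead stay basis-free: from $\ad_{DH}=[D,\ad_H]$ restricted to $\k'$ you deduce that $\rho(DH)=[S,\rho(H)]$ is trace-orthogonal to all of $\rho(\h)$ (cyclicity plus commutativity of $\rho(\h)$), while also lying in $\rho(\h)$; since the trace form is negative definite on $\mathfrak{so}(\k')$, this forces $\rho(DH)=0$. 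Your route is shorter and avoids any normal form, essentially observing that a commutator $[S,T]$ with $T$ in an abelian subalgebra cannot itself lie in that subalgebra unless it vanishes. What the paper's computation buys in exchange is the explicit adapted basis and the functionals $\lambda_i$, which it reuses immediately afterwards in Theorem \ref{base linda}; with your proof that basis would still have to be constructed separately there.
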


\begin{proof}
Since $D$ is a derivation of $\k$, then $D(\k')\subset \k'$ and $D(\z)\subset\z$. Moreover, since $D$ is skew-symmetric and the sum is orthogonal we have that $D(\h)\subset\h$. 
	
It follows from Proposition \ref{gplana}(b) that $\ad_H\in\mathfrak{so(\k')}$ for any $H\in\h$. Then, since $\h$ is abelian, we get that  
$\mathfrak{F}=\{\ad_H:\k'\to\k': H\in\h\}$ is a commutative family of skew-symmetric endomorphisms of $\k'$. Therefore, $\mathfrak{F}$ is contained in a maximal abelian subalgebra 
$\mathfrak a$ of $\mathfrak{so}(\mathfrak k')$. The subalgebra $\mathfrak a$ is conjugated by an element of $SO(\k')$ to the 
following maximal abelian subalgebra of $\mathfrak{so}(\k')$: 
	\[\left\{\left(\begin{array}{ccccc}       
	0 &-a_1    &        &    &    \\
	a_1 & 0    &        &    &    \\
	&    & \ddots &    &    \\
	&    &        &  0 & -a_n \\
	&    &        &  a_n &  0 \\
	\end{array}\right): a_i\in\R \right\},\]
for some orthonormal basis $\{e_1,f_1,\dots,e_n,f_n\}$ of $\k'$. In this basis the elements of the family $\mathfrak{F}$ can be represented by matrices 
	\[
	\ad_H=\left(\begin{array}{ccccc}       
	0  & -\lambda_1(H) &        &               &              \\
	\lambda_1(H) & 0          &        &               &              \\
	&               & \ddots &               &              \\
	&               &        &         0     & -\lambda_n(H)\\
	&               &        &  \lambda_n(H) &     0        \\
	\end{array}\right), \]
for some $\lambda_i\in\h^*$,  $i=1,\dots,n$. 	We compute \[D[H,e_i]=[DH,e_i]+[H,De_i]=\lambda_i(DH)f_i+[H,De_i],\] 
while, on the other hand, \[D[H,e_i]=D(\lambda_i(H)f_i)=\lambda_i(H)Df_i.\]
Comparing the components in the direction of $f_i$ in both expressions we obtain that \[0=\langle\lambda_i(H)Df_i,f_i\rangle= \langle\lambda_i(DH)f_i+[H,De_i],f_i\rangle= 
\lambda_i(DH),\] 
since $D$ and $\ad_H$ are skew-symmetric. Therefore, $\lambda_i(DH)=0$ for all $i$, that is, $\ad_{DH}=0$. It follows from Proposition \ref{gplana}(b) that $DH=0$.
\end{proof}

\

As a consequence we have the following result, which will be an important tool to produce examples of unimodular solvable Lie algebras with Vaisman structures in \S\ref{examples}.

\medskip

\begin{teo}\label{base linda}
If $(\mathfrak k, J,\pint)$ is a K\"ahler flat Lie algebra and $D$ is a unitary derivation of $\k$, then $D(\h+J\h)=0$ and there exists an orthonormal basis 
$\{e_1,f_1,\dots,e_n,f_n\}$ of $\k'$ such that 
\[J|_{\k'}=\left(\begin{array}{ccccc}
    0 &-1    &        &    &    \\
    1 & 0    &        &    &    \\
        &    & \ddots &    &    \\
        &    &        &  0 & -1 \\
        &    &        &  1 &  0 \\
\end{array}\right), \quad
D|_{\k'}=\left(\begin{array}{ccccc}       

    0 &-a_1    &        &    &    \\
    a_1 & 0    &        &    &    \\
        &    & \ddots &    &    \\
        &    &        &  0 & -a_n \\
        &    &        &  a_n &  0 \\
\end{array}\right),\]
\[
\ad_H|_{\k'}=\left(\begin{array}{ccccc}       

       0  & -\lambda_1(H) &        &               &              \\
 \lambda_1(H) & 0          &        &               &              \\
          &               & \ddots &               &              \\
          &               &        &         0     & -\lambda_n(H)\\
          &               &        &  \lambda_n(H) &     0        \\
\end{array}\right), \]
for some $a_i\in\R$ and $\lambda_i\in\h^*$ for all $i=1,\dots,n$. These linear functionals satisfy $\lambda_i\neq 0$ for all $i=1,\ldots,n$, and $\bigcap_{i=1}^n\ker 
\lambda_i=\{0\}$.
\end{teo}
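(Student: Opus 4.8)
The plan is to prove Theorem \ref{base linda} by combining Lemma \ref{Dh0} with Proposition \ref{Jkahler-sii} and the structural Proposition \ref{gplana}, and then to produce the common block-diagonal form for $J$, $D$, and $\ad_H$ by a simultaneous normalization argument. First I would establish the vanishing statement $D(\h+J\h)=0$. By Lemma \ref{Dh0} we already have $D(\h)=0$. Since $D$ is unitary, it commutes with $J$, so for any $H\in\h$ we get $D(JH)=J(DH)=0$; this immediately gives $D(\h+J\h)=0$. (Note that by Proposition \ref{Jkahler-sii}(i), $J$ preserves $\z\oplus\h$, so $J\h\subset\z\oplus\h$ and the expression $\h+J\h$ makes sense inside $\z\oplus\h$.)

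The heart of the proof is the simultaneous block structure on $\k'$. The key observation is that on $\k'$ we have three skew-symmetric endomorphisms to reconcile: $J|_{\k'}$ (which squares to $-\I$ and is skew-symmetric, since $J$ is orthogonal and $\k'$ is $J$-invariant by Proposition \ref{Jkahler-sii}(i)), $D|_{\k'}$ (skew-symmetric by hypothesis, and mapping $\k'$ into $\k'$ since $D$ is a derivation), and the family $\{\ad_H|_{\k'}:H\in\h\}$ (commuting skew-symmetric endomorphisms by Proposition \ref{gplana}(b) and abelianness of $\h$). The crucial point is that these all commute pairwise: $D$ commutes with $J$ by hypothesis; $\ad_H$ commutes with $J$ by Proposition \ref{Jkahler-sii}(ii); and $D$ commutes with each $\ad_H$ because $D$ is a derivation with $D(H)=0$, so $D(\ad_H y)=D[H,y]=[DH,y]+[H,Dy]=[H,Dy]=\ad_H(Dy)$. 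Having a commuting family of skew-symmetric operators on the Euclidean space $\k'$, I would invoke the simultaneous-normal-form theorem: there is an orthonormal basis in which every member of the family is block-diagonal with $2\times 2$ skew-symmetric blocks. Since $J|_{\k'}$ squares to $-\I$, in its canonical form each $2\times2$ block must be $\begin{pmatrix}0&-1\\1&0\end{pmatrix}$ (up to sign of the off-diagonal entry, which can be fixed by ordering each pair $\{e_i,f_i\}$ appropriately). This yields the displayed forms for $J|_{\k'}$, $D|_{\k'}$, and $\ad_H|_{\k'}$ simultaneously, with the $a_i\in\R$ and $\lambda_i\in\h^*$ read off from the respective blocks.

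I expect the main technical obstacle to be the careful bookkeeping that makes the single orthonormal basis work for all three objects at once while forcing $J$ into the specific form with $+1$ below the diagonal. The subtlety is that simultaneous block-diagonalization gives each operator a block form, but one must argue that the block decomposition of $J$ can be taken compatibly with that of $D$ and the $\ad_H$; this is where commutativity is essential, since commuting skew-symmetric operators admit a common invariant orthogonal splitting of $\k'$ into $2$-dimensional pieces (the common eigenspace decomposition after complexifying, grouping conjugate pairs of imaginary eigenvalues). Once a common invariant $2$-plane is chosen, I would orient its orthonormal basis $\{e_i,f_i\}$ so that $Je_i=f_i$, which is always achievable because $J$ acts on each $2$-plane as a rotation by $\pm\pi/2$.

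Finally, I would verify the two conditions on the functionals $\lambda_i$. To see $\bigcap_{i=1}^n\ker\lambda_i=\{0\}$, suppose $H\in\h$ satisfies $\lambda_i(H)=0$ for all $i$; then $\ad_H|_{\k'}=0$, and by the injectivity statement in Proposition \ref{gplana}(b) this forces $H=0$, so the intersection of kernels is trivial. For $\lambda_i\neq0$: if some $\lambda_i$ were identically zero, then the corresponding $2$-plane $\operatorname{span}\{e_i,f_i\}$ would lie in $\bigcap_H\ker\ad_H$ restricted to $\k'$, i.e. these vectors would be central in $\k$, contradicting that $\k'=[\k,\k]$ has trivial intersection with the center $\z$ in the orthogonal decomposition of Proposition \ref{gplana}; hence every $\lambda_i$ is nonzero. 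This completes the argument.
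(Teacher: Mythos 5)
Your proposal is correct and follows essentially the same route as the paper: both establish $D(\h+J\h)=0$ from Lemma \ref{Dh0} together with $DJ=JD$, then observe that $\{\ad_H|_{\k'}\}_{H\in\h}\cup\{J|_{\k'},D|_{\k'}\}$ is a commuting family of skew-symmetric endomorphisms of $\k'$ admitting a common block-diagonal orthonormal basis, and verify the conditions on the $\lambda_i$ by the same two arguments. Your explicit normalization of the $J$-blocks to $\bigl(\begin{smallmatrix}0&-1\\1&0\end{smallmatrix}\bigr)$ via $J^2=-\I$ and reordering is a detail the paper leaves implicit, but the substance is identical.
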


\begin{proof} 
It follows immediately from Lemma \ref{Dh0} and $DJ=JD$ that $D(\h+J\h)=0$.  

From Lemma \ref{Dh0} and the fact that $D$ is a derivation of $\k$ we obtain that $D$ commutes with $\ad_H$ for all $H\in\h$. Also, Proposition \ref{Jkahler-sii} implies 
that $J|_{\k'}$ commutes with $\ad_H|_{\k'}$ for all $H\in\h$. Therefore the family  
$\mathfrak{F}'=\{\ad_H:\k'\to\k': H\in\h\}\cup \{J|_{\k'},D|_{\k'}\}$ is a commutative family of skew-symmetric endomorphisms of $\k'$. The existence of the basis in the statement 
follows as in the proof of Lemma \ref{Dh0}.

We analyze next the linear functionals $\lambda_i\in\h^*$, $i=1,\ldots,n$. If $\lambda_i=0$ for some $i$ then $[H,e_i]=0=[H,f_i]$ for all $H\in\h$, and this implies that 
$e_i,f_i\in\z$, which is a contradiction since $\z\cap\k'=\{0\}$. Now, if $H\in \bigcap_{i=1}^n\ker \lambda_i=\{0\}$, we have that $\ad_H|_{\k'}=0$ and therefore $H=0$ according to 
Proposition \ref{gplana}(b).
\end{proof}	

\

\section{Further properties of Vaisman Lie algebras}
In this section we continue our study of unimodular solvable Vaisman Lie algebras, applying the results obtained in \S \ref{kahler-flat} to the K\"ahler flat Lie algebra given in 
Theorem \ref{Main Theorem-A}. In this way we obtain algebraic restrictions for the existence of Vaisman structures.

\medskip

In Corollary \ref{JA-central} we determined the center of a unimodular solvable Lie algebra $\g$ admitting a Vaisman structure. In what follows we will derive other 
algebraic properties of these Lie algebras, in particular we will analyze its commutator $\g'$ and its nilradical $\mathfrak{n}$. Recall that, since $\g$ is solvable, its 
nilradical is given by $\mathfrak n=\{x\in\g \, : \, \ad_x:\g\to\g \text{ is nilpotent}\}$ and $\g'\subseteq \n$.

Let us set some notation. We denote by $\u$ the largest $J$-invariant subspace of the center of $\k$, that is, $\u=\z\cap J\z$, and we define
$2r=\dim \u +\dim\mathfrak{k'}$ and $s=\dim \z - \dim \u$. 
Note that $\z\oplus\h$ decomposes orthogonally as $\z\oplus\h=\u\oplus(\h+J\h)$.

\smallskip

\begin{prop}\label{g'-n}
	Let $\g$ be a unimodular solvable Lie algebra admitting a Vaisman structure and consider the decomposition $\g=\R A\ltimes_D(\R JA\oplus_{\omega'}\k)$ with 
	$\k=\z\oplus\h\oplus\mathfrak k'$ as in Proposition \ref{gplana}. Then the commutator ideal $\g'$ of $\g$ is given by $\g'=\R JA\oplus 
\operatorname{Im}(D|_\u)\oplus\mathfrak{k'}$, while the nilradical $\n$ of $\g$ is given by:
	\begin{itemize}
	 \item If $D|_\u\neq0$ or $D|_{\k'}\notin \ad(\h)\subset \mathfrak{so}(\k') $, then $\n=\R JA\oplus\z\oplus\mathfrak{k'}\simeq \R^{s}\times\h_{2r+1}$. 
	 
	 \item If $D|_\u=0$ and $D|_{\k'}=0$, i.e., $A\in\z(\g)$, then $\n=\R A\oplus\R JA\oplus\z\oplus\mathfrak{k'}\simeq \R^{s+1}\times\mathfrak{h}_{2r+1}.$
	 
	 \item If $D|_\u=0$ and $0\neq D|_{\mathfrak k'}\in \ad(\h)$, i.e., $D|_{\mathfrak k'}=-\ad_H|_{\k'}$ for a unique $0\neq H\in \h$, then 
$\n=\R(A+H)\oplus\R JA\oplus\z\oplus\k'$. Furthermore, if $JH\in\h$, then $\n\simeq \R^{s+1}\times\h_{2r+1}$; and if $JH\notin\h$, then $\n\simeq \R^{s-1}\times\h_{2(r+1)+1}$.
	\end{itemize}
\end{prop}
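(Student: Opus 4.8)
The plan is to read off the derived ideal directly from the double-extension bracket, then to identify the nilradical by isolating the semisimple part of $\ad_x$, and finally to recognize the resulting nilpotent ideal as a product of a Heisenberg algebra with an abelian factor. Throughout I abbreviate $\z_0:=\z\cap(\h+J\h)$, so that $\z=\u\oplus\z_0$ and $\g=(\R A\oplus\h)\oplus(\R JA\oplus\u\oplus\z_0\oplus\k')$.

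First I would compute $\g'$. In the decomposition $\g=\R A\ltimes_D(\R JA\oplus_{\omega'}\k)$ the bracket is $[A,x]=Dx$, $JA$ is central, and $[x,y]=\omega'(x,y)JA+[x,y]_\k$ for $x,y\in\k$. Hence $\g'$ is spanned by $\operatorname{Im}(D|_\k)$, by $\k'=[\k,\k]_\k$, and by $\R JA$, the last because $\omega'$ is nondegenerate. By Theorem \ref{base linda} a unitary derivation kills $\h+J\h$, and since $D$ preserves $\z$ and $\k'$ while $\z=\u\oplus\z_0$, one gets $\operatorname{Im}(D|_\k)=\operatorname{Im}(D|_\u)\oplus\operatorname{Im}(D|_{\k'})$ with $\operatorname{Im}(D|_{\k'})\subseteq\k'$. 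Absorbing the last summand into $\k'$ yields $\g'=\R JA\oplus\operatorname{Im}(D|_\u)\oplus\k'$, and directness follows from $\operatorname{Im}(D|_\u)\subseteq\u\subseteq\z$ together with $\z\cap\k'=\{0\}$.

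Next I would determine the nilradical. Writing a general element as $x=aA+H+m$ with $H\in\h$ and $m\in\R JA\oplus\u\oplus\z_0\oplus\k'$, I would pass to $\bar\g=\g/\R JA$, in which the ideal $\R JA\oplus\u\oplus\z_0\oplus\k'$ becomes abelian. There $\ad_{\bar x}$ restricts on $\u\oplus\k'$ to the skew-symmetric, hence semisimple, operator $aD+\ad_H$, and since every bracket of $\g$ lands in $\R JA$ one checks that $\ad_x$ is nilpotent if and only if this operator vanishes, i.e. $aD|_\u=0$ and $aD|_{\k'}+\ad_H|_{\k'}=0$. Using the injectivity of $\ad\colon\h\to\mathfrak{so}(\k')$ from Proposition \ref{gplana}, the three cases of the statement are exactly the three possibilities for the solution set $(a,H)$ of this linear system: no nonzero solution when $D|_\u\neq0$ or $D|_{\k'}\notin\ad(\h)$ (Case 1), the whole line $\R A$ when $D=0$ (Case 2), and the line $\R(A+H)$ with $H$ the unique element satisfying $\ad_H|_{\k'}=-D|_{\k'}$ when $0\neq D|_{\k'}\in\ad(\h)$ (Case 3). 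Adjoining the always-nilpotent directions $\R JA\oplus\z\oplus\k'$ gives $\n$ as a set in each case.

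Finally I would identify the isomorphism type. In every case $\n$ is two-step nilpotent with $[\n,\n]=\R JA$, so it is the Heisenberg-type algebra attached to the alternating form $\Omega$ induced on $\n/\R JA$, and $\n\cong\R^{\dim\n-1-\operatorname{rank}\Omega}\times\h_{\operatorname{rank}\Omega+1}$. The computation of $\operatorname{rank}\Omega$ rests on the $\omega'$-orthogonal splitting $\z\oplus\k'=\u\oplus\z_0\oplus\k'$, in which $\omega'|_\u$ and $\omega'|_{\k'}$ are nondegenerate (both subspaces being $J$-invariant). I expect the main obstacle to be controlling $\omega'$ on the center: one must show $\z_0$ is $\omega'$-isotropic, equivalently $J\z_0\subseteq\h$, so that $\z_0$ becomes central in $\n$ and contributes the $\R^s$ factor with $2r=\dim\u+\dim\k'$; this is the delicate flat-K\"ahler input, and it is where I would concentrate the effort. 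In Cases 2 and 3 one must then analyze the extra generator $A$, respectively $A+H$: its $\Omega$-pairing against $\z\oplus\k'$ is $v\mapsto\omega'(H,v)=\la JH,v\ra$, so whether $JH\in\h$ or $JH\notin\h$ decides whether this generator lies in the radical of $\Omega$ or raises its rank by $2$. That dichotomy is precisely what separates $\R^{s}\times\h_{2r+1}$ from $\R^{s-1}\times\h_{2(r+1)+1}$ in Case 3, and I regard it as the subtlest step of the proof.
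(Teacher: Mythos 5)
Your computation of $\g'$ and your identification of the nilradical as a subspace are correct and follow essentially the same route as the paper: the paper also reduces the nilpotency of $\ad_{A+H}$ to the vanishing of the two skew-symmetric blocks $D|_\u$ and $D|_{\k'}+\ad_H|_{\k'}$ (writing the operator in block-triangular form rather than passing to $\g/\R JA$, which is the same argument), and then invokes the injectivity of $\ad:\h\to\mathfrak{so}(\k')$ exactly as you do. Your observation that $JA\in\g'$ can be extracted from the nondegeneracy of $\omega'$ is a harmless variant of the paper's appeal to Proposition \ref{JA-conm}.

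The gap is in the final step, and you located it yourself but did not close it: you reduce the isomorphism type of $\n$ to the rank of $\omega'$ on $\z\oplus\k'$ and note that the stated answer requires $\z_0:=\z\cap(\h+J\h)$ to be $\omega'$-isotropic, equivalently $J\z_0\subseteq\h$. You defer this as ``the delicate flat-K\"ahler input,'' but it is in fact \emph{false} in general, so the postponed work cannot be done. Concretely, take $\k'=\operatorname{span}\{u_1,v_1,u_2,v_2\}$, $\h=\operatorname{span}\{H_1,H_2\}$ with $[H_i,u_i]=v_i$, $[H_i,v_i]=-u_i$, $\z=\operatorname{span}\{Z_1,Z_2\}$, the listed basis orthonormal, and $Ju_i=v_i$, $JH_1=aH_2+bZ_1$, $JH_2=-aH_1+bZ_2$, $JZ_1=-bH_1-aZ_2$, $JZ_2=-bH_2+aZ_1$ with $a^2+b^2=1$, $ab\neq0$. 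Both conditions of Proposition \ref{Jkahler-sii} hold, so this is a K\"ahler flat Lie algebra with $\u=\z\cap J\z=\{0\}$ and $\z_0=\z$, yet $\omega'(Z_1,Z_2)=\la JZ_1,Z_2\ra=-a\neq0$. Taking $D'=0$, Theorem \ref{Main Theorem-B} yields a $10$-dimensional unimodular solvable Vaisman algebra whose nilradical $\R A\oplus\R JA\oplus\z\oplus\k'$ has brackets $[Z_1,Z_2]=-aJA$, $[u_i,v_i]=JA$, hence is isomorphic to $\R\times\h_7$, whereas the second case of the Proposition (here $s=2$, $r=2$) predicts $\R^3\times\h_5$; these have centers of different dimensions. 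So the isomorphism types as stated require the extra hypothesis $J\z_0\subseteq\h$ (automatic when $\dim\h\leq1$, which covers every example and the low-dimensional classification in the paper); in general one must add $\operatorname{rank}(\omega'|_{\z_0})$ to $2r$ and subtract it from $s$. The paper's own proof offers no help here --- it dismisses this step with the remark that the isomorphism classes ``follow easily from the description of the Lie bracket'' --- so your instinct that this is the crux was sound, but the step you left open is precisely where the statement itself breaks down.
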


\begin{proof}
Since $JA\in \g'$ (see Proposition \ref{JA-conm}), it follows from \eqref{corchetek} that $\k'=[\k,\k]_\k\subset \g'$. Clearly, 
$D(\h+J\h)=(D|_{\k})(\h+J\h)$ and, as $D|_{\k}$ is a unitary derivation of $\k$ (Theorem \ref{Main Theorem-A}), we have 
$D(\h+J\h)=0$, due to Theorem \ref{base linda}. As a consequence,  $\g'=\R JA\oplus\operatorname{Im}(D|_\u)\oplus\mathfrak{k'}$. 
	
	In order to determine the nilradical note first that, for any $z\in\z$, $\operatorname{Im}(\ad_z)\subset\R JA\oplus\mathfrak{z}$ and 
$\operatorname{Im}(\ad_z^2)\subset\R JA$. Since $JA\in\z(\g)$, we obtain that $\ad_z$ is a nilpotent operator on $\g$, which implies that $\z\subset \mathfrak{n}$. Therefore $\R 
JA\oplus\z\oplus\mathfrak{k'}\subset\mathfrak{n}$. Moreover, it follows from Proposition \ref{gplana}(b) that $\h\cap\n= \{0\}$.
	
Suppose that $\R JA\oplus\z\oplus\mathfrak{k'}\subsetneq\mathfrak{n}$, then there exists $0\neq A+H\in\n$ for some $H\in\h$. Therefore the operator $\ad_{A+H}$ is nilpotent. This 
operator can be written, for some orthonormal basis of $\R JA\oplus\u\oplus(\h+J\h)\oplus\mathfrak k'$, as 
\[\ad_{A+H}=\left(\begin{array}{c|ccc|ccc|ccc}       
\; &  &     & &    & v^t &   &  &  & \\
\hline
&  &     & &    &    &    &  &  &   \\
&  & D|_\u  & &    &    &    &  &  &   \\
&  &     & &    &    &    &  &  &   \\
\hline
&  &     & &    &    &    &  &  &   \\
&  &     & &    &    &    &  &  &   \\
&  &     & &    &    &    &  &  &   \\
\hline
&  &     & &    &    &    &  &  &   \\
&  &     & &    &    &    &  & D|_{\mathfrak k'} +\ad_H|_{\mathfrak k'} &   \\
&  &     & &    &    &    &  &  &   \\
\end{array}\right),\]
with both $D|_\u$ and $D|_{\mathfrak k'} +\ad_H|_{\mathfrak k'}$ skew-symmetric. It follows that $\ad_{A+H}$ is nilpotent if and only if
\begin{equation}\label{nilrad}
D|_\u=0 \quad \text{ and } \quad D|_{\mathfrak k'}=- \ad_H|_{\mathfrak k'}.
\end{equation}
It follows that $H\in \h$ satisfying \eqref{nilrad} is unique, since $\ad:\h\to \mathfrak{so}(\k')$ is injective (Proposition \ref{gplana}(b)). We have two possibilities: if $H=0$, 
then $A\in\z(\g)$ and $\n=\R A\oplus\R JA\oplus\z\oplus\mathfrak{k'}$. On the other hand, if $H\neq0$, then $\n=\R(A+H)\oplus\R JA\oplus\z\oplus\k'$.

\smallskip

Assume now that $\n=\R JA\oplus\z\oplus\mathfrak{k'}$; according to \eqref{nilrad} this happens if and only if $D|_\u \neq0$ or $D|_{\k'}\notin\ad(\h)$.

\smallskip

The isomorphism classes of $\n$ in the different cases above follow easily from the description of the Lie bracket of $\g$.
\end{proof}

\medskip

\begin{rem} 
Note that the nilradical $\n$ in Proposition \ref{g'-n} is isomorphic to $\R^{p-1}\times\mathfrak{h}_{2q+1},$ for some $p,q\in\N$ and, as a consequence, it is never abelian. On 
the other hand, it follows from Proposition \ref{g'-n} and Lemma \ref{Dh0} that $\n^\perp$ is an abelian subalgebra of $\g$ if and only if $\h\cap J\h=\{0\}$.
\end{rem}

\medskip

We can give next a strong algebraic obstruction to the existence of Vaisman structures on a unimodular solvable Lie algebra.

\begin{teo}\label{Vaisman-imag.puras}
If the unimodular solvable Lie algebra $\g$ admits a Vaisman structure, then the eigenvalues of the operators $\ad_x$ with $x\in\g$ are all imaginary (some of them are 0).
\end{teo}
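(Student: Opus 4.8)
The plan is to read off the spectrum of $\ad_x$, for an arbitrary $x\in\g$, from the double extension description $\g=\R A\ltimes_D(\R JA\oplus_{\omega'}\k)$ of Theorem \ref{Main Theorem-A}, by producing an $\ad_x$-invariant flag along which $\ad_x$ becomes block triangular with skew-symmetric diagonal blocks. Write $x=aA+bJA+k$ with $a,b\in\R$ and $k\in\k$, and decompose $k=k_\z+k_\h+k_{\k'}$ according to $\k=\z\oplus\h\oplus\k'$ (Proposition \ref{gplana}). Since $JA\in\z(\g)$ we have $\ad_{JA}=0$, so $\ad_x=a\,\ad_A+\ad_k$, with $\ad_A|_\k=D':=D|_\k$ skew-symmetric (Proposition \ref{adA-antisim}). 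The essential difficulty is that $\ad_A$ and $\ad_k$ do not commute, and $\ad_k$ is in general neither skew-symmetric nor nilpotent; hence one cannot simply add the spectra of the two summands, and a genuine triangularization is needed.

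Concretely, I would check that the chain
\[ \R JA\ \subset\ \R JA\oplus\z\ \subset\ \R JA\oplus\z\oplus\k'\ \subset\ \R JA\oplus\z\oplus\k'\oplus\R A\ \subset\ \g \]
is a flag of $\ad_x$-invariant subspaces. Invariance of the first three terms follows from the centrality of $JA$, the bracket relation \eqref{corchetek}, and the flatness decomposition of $\k$: for $z\in\z$ one has $\ad_x z=a D'z+\omega'(k,z)JA\in\R JA\oplus\z$ (as $z$ is central in $\k$ and $D'$ preserves $\z$), while for $v\in\k'$ one gets $\ad_x v=a D'v+\ad_{k_\h}v+\omega'(k,v)JA\in\R JA\oplus\k'$, because $\ad_{k_\z}$ and $\ad_{k_{\k'}}$ annihilate $\k'$ whereas $\ad_{k_\h}$ and $D'$ preserve it. For the fourth term, $\ad_x A=-D'k\in\z\oplus\k'$, since $D'(\h)=0$ by Lemma \ref{Dh0}; and finally $\ad_x H\in\R JA\oplus\k'$ for $H\in\h$, because $[k_\h,H]=0$ ($\h$ abelian) and $[k_{\k'},H]\in\k'$.

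It then remains to identify the induced diagonal blocks on the successive quotients and to observe that each has purely imaginary spectrum. On $\R JA$ the block is $0$; on $\z$ it is $aD'|_\z$, skew-symmetric because $D'$ is skew-symmetric and preserves the orthogonal summand $\z$; on $\k'$ it is $aD'|_{\k'}+\ad_{k_\h}|_{\k'}$, a sum of two elements of $\mathfrak{so}(\k')$ (the first by skew-symmetry of $D'$, the second since $\ad_{k_\h}|_{\k'}=\ad_H|_{\k'}$ by Proposition \ref{gplana}(b)), hence skew-symmetric; and on the last two quotients $\R A$ and $\h$ the blocks are $0$. Since the characteristic polynomial of a block triangular operator is the product of those of its diagonal blocks, the spectrum of $\ad_x$ is the union of the spectra of these blocks, each contained in $i\R$, which proves the claim.

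The step I expect to be the crux is precisely the identification of the correct flag: the statement fails for the naive reason one might hope for, namely that $\ad_x$ is a commuting sum of a skew-symmetric and a nilpotent operator. What actually makes it work is the rigid compatibility between the double-extension structure of $\g$ and the flatness decomposition $\k=\z\oplus\h\oplus\k'$ (Proposition \ref{gplana}) of the Kähler flat factor, together with the vanishing $D'(\h)=0$ of Lemma \ref{Dh0}. Once this flag is in place, the remaining steps are the routine bracket computations indicated above, and the conclusion is immediate from the skew-symmetry of the diagonal blocks.
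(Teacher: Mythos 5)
Your proof is correct and follows essentially the same route as the paper: both express $\ad_x$ in block form adapted to the double extension $\R A\ltimes_D(\R JA\oplus_{\omega'}\k)$ and the flat decomposition of $\k$, and conclude from block-triangularity that the spectrum is the union of the spectra of skew-symmetric (or zero) diagonal blocks. The only differences are cosmetic: you organize the triangularization as an explicit $\ad_x$-invariant flag ordered $JA,\z,\k',A,\h$ (which is, if anything, a cleaner way to justify the ``it is easy to see'' step), while the paper displays the matrix in the basis $A,JA,\u,\h+J\h,\k'$.
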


\begin{proof}
Recall that $\g$ is a double extension $\g=\R A\ltimes_D(\R JA\oplus_{\omega'}\k)$ with $\k=\z\oplus\h\oplus\mathfrak k'$ and $\z\oplus\h=\z\cap 
J\z\oplus(\h+J\h)$. Let $\{A,JA,u_1,v_1,\dots,u_p,v_p,x_1,y_1,\dots,x_q,y_q,e_1,f_1,\dots,e_m,f_m,\}$ be an orthonormal basis of $\g$ adapted to this decomposition. Moreover, 
$Ju_i=v_i$, $Jx_i=y_i$ and $Je_i=f_i$.

Given $x=aA+bJA+z+h+y\in\g$ with $a,b\in\R$, $z\in\z\cap J\z$, $h\in\h+J\h$,  $y\in\k'$,  we compute the matrix of the operator $\ad_x$ in such basis:
\[\ad_x=\left(\begin{array}{c|c|ccc|ccc|ccc}       
   &  &  &     & &  &    &  &  &    &   \\
\hline 
   &  &  & \alpha^t    & &  & \beta^t   &  &  &    & \\
\hline
   &  &  &     & &  &    &  &  &    &   \\
  \gamma  &  &   & aD|_{\z\cap J\z}  &  &    &  &  &    &   \\
   &  &  &     & &  &    &  &  &    &   \\
\hline
   &  &  &     & &  &    &  &  &    &   \\
   &  &  &     & &  &    &  &  &    &   \\
   &  &  &     & &  &    &  &  &    &   \\
\hline
   &  &  &     & &  &    &  &  &    &   \\
 \delta  &  &     & &  &    & C&  & & aD|_{\mathfrak k'} + B &   \\
   &  &  &     & &  &    &  &  &    &   \\
\end{array}\right),\]
for some $\alpha\in\R^{2p}$, $\beta,\gamma\in\R^{2q}$, $\delta\in\R^{2m}$, $C\in \operatorname{Mat}(m\times q,\R)$ and $B\in \mathfrak{u}(m)$.
Since $aD|_{\z\cap J\z}$ and $aD|_{\mathfrak k'} + B$ are skew-symmetric, it is easy to see that the eigenvalues of $\ad_x$ for $x\in\g$ are all imaginary.
\end{proof}

\medskip

\begin{rem}
A Lie algebra $\g$ satisfying the condition that the eigenvalues of the operators $\ad_x$ are all imaginary for all $x\in\g$ is called a Lie algebra of type I (see 
\cite{O-V}).
\end{rem}

\

A consequence of Theorem \ref{Vaisman-imag.puras} is the following result, proved recently by H. Sawai in \cite{S3}. We recall that a Lie group is called 
completely solvable if it is solvable and all the eigenvalues of the adjoint operators $\ad_x$ are real, for all $x$ in its Lie algebra.

\begin{cor}
Let $G$ be a simply connected completely solvable Lie group and $\Gamma\subset G$ a lattice. If the solvmanifold $\Gamma\backslash G$ admits a Vaisman structure $(J,g)$ such that 
the complex structure $J$ is induced by a left invariant complex structure on $G$, then $G=H_{2n+1}\times\R$, where $H_{2n+1}$ denotes the $(2n+1)$-dimensional Heisenberg Lie 
group.
\end{cor}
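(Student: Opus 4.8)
The plan is to combine the algebraic restriction from Theorem~\ref{Vaisman-imag.puras} with the complete solvability hypothesis, and then feed the result into the structure theory developed in Proposition~\ref{g'-n}. Since $\Gamma\backslash G$ admits a Vaisman structure induced by a left invariant complex structure, the Lie algebra $\g$ of $G$ carries a Vaisman structure. The existence of the lattice $\Gamma$ forces $G$, and hence $\g$, to be unimodular (by \cite{Mi}), and $\g$ is solvable since $G$ is completely solvable. Therefore $\g$ is a unimodular solvable Lie algebra with a Vaisman structure, and Theorem~\ref{Vaisman-imag.puras} applies: the eigenvalues of every $\ad_x$ are purely imaginary.

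The key step is to observe that complete solvability and Theorem~\ref{Vaisman-imag.puras} together pin down the adjoint operators completely. Complete solvability means all eigenvalues of each $\ad_x$ are real, while the Vaisman restriction forces them to be imaginary. The only numbers that are simultaneously real and imaginary are zero, so every $\ad_x$ is nilpotent for all $x\in\g$. This means $\g$ is a nilpotent Lie algebra. I would make this dichotomy explicit and conclude $\g=\mathfrak n$, i.e. $\g$ coincides with its own nilradical.

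Now I would invoke the structural description of the nilradical obtained in Proposition~\ref{g'-n}. Writing $\g=\R A\ltimes_D(\R JA\oplus_{\omega'}\k)$ with $\k=\z\oplus\h\oplus\k'$ as in Theorem~\ref{Main Theorem-A}, the condition $\g=\n$ forces $A$ (or $A+H$) to lie in the nilradical, which by \eqref{nilrad} requires $D|_\u=0$ and $D|_{\k'}=-\ad_H|_{\k'}$. More importantly, nilpotency of $\g$ forces the derivation $D=\ad_A$ to be nilpotent; but $D$ is also skew-symmetric by Proposition~\ref{adA-antisim}, and a skew-symmetric nilpotent operator vanishes. Hence $D=0$ and $\ad_H|_{\k'}=0$, so by injectivity of $\ad:\h\to\mathfrak{so}(\k')$ (Proposition~\ref{gplana}(b)) we get $\h=\{0\}$. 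With $\h=\{0\}$ the flat K\"ahler Lie algebra $\k=\z\oplus\k'$ must itself be abelian, because $\ad_x=\nabla_x=0$ for all $x\in\z\oplus\k'$ by Proposition~\ref{gplana}(c)--(d). Consequently the only nontrivial bracket on $\g$ comes from the central extension term $\omega'$, giving $\g\simeq\R^{s+1}\times\h_{2r+1}$, which is exactly the Lie algebra of $H_{2n+1}\times\R$.

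The main obstacle I anticipate is the bookkeeping needed to translate ``$\g$ is nilpotent'' into the vanishing of all the structural pieces of the double extension, and to check that the surviving bracket genuinely reproduces a Heisenberg-times-abelian algebra rather than some other two-step nilpotent algebra. Care is required because the central extension by $\omega'$ is nondegenerate on $\k=\k'$ (as $\omega'$ restricts to the K\"ahler form, which is symplectic on the even-dimensional $\k'$), so the resulting nilpotent algebra is precisely $\h_{2r+1}$ together with the central abelian directions $\u$ and $\R A$; identifying the dimension $2n+1=2r+1$ and confirming there are no extra brackets is the delicate part. Once this is verified, Corollary~\ref{mostow} (diffeomorphism class determined by the lattice) guarantees that $G$ itself is $H_{2n+1}\times\R$.
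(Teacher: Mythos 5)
There is a genuine gap at the very first step. The hypothesis only requires the complex structure $J$ to be induced by a left invariant complex structure on $G$; the Vaisman metric $g$ on $\Gamma\backslash G$ is an arbitrary, not necessarily invariant, metric. Your sentence ``the Lie algebra $\g$ of $G$ carries a Vaisman structure'' is therefore not automatic: you must first produce a \emph{left invariant} metric $\tilde g$ on $G$, compatible with $J$, which is again Vaisman. This is exactly where the paper invokes the symmetrization process of Belgun \cite{Bel} together with results of \cite{S2}: one averages $g$ over the compact solvmanifold to obtain an invariant Hermitian metric and then checks that the LCK and parallel-Lee-form conditions survive the averaging. Without this step the corollary would reduce to a statement about invariant Vaisman structures, and the point of the hypothesis that only $J$ is invariant would be lost. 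Everything after this point in your argument is sound: unimodularity from the existence of a lattice, and the observation that eigenvalues which are simultaneously real (complete solvability) and purely imaginary (Theorem \ref{Vaisman-imag.puras}) must vanish, so $\g$ is nilpotent.

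For the final step the paper simply quotes Sawai's result \cite{S} that a nilpotent LCK Lie algebra is $\h_{2n+1}\times\R$, whereas you re-derive this from the double extension $\g=\R A\ltimes_D(\R JA\oplus_{\omega'}\k)$: nilpotency forces the skew-symmetric operators $\ad_A$ and $\ad_{A+H}$ to be both nilpotent and skew-symmetric, hence zero, so $D=0$ and, by injectivity of $\ad:\h\to\mathfrak{so}(\k')$, $\h=0$; then $\k$ is abelian and $\g\cong\R\times\h_{2n+1}$. This is a legitimate, self-contained alternative (you should argue that $\ad_{A+H}$ is nilpotent for \emph{every} $H\in\h$, not only for the single $H$ singled out by \eqref{nilrad}, but that is immediate once $\g$ equals its own nilradical). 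Note also that the closing appeal to Corollary \ref{mostow} is unnecessary: once $\g\cong\R\times\h_{2n+1}$ and $G$ is simply connected, $G=\R\times H_{2n+1}$ by the correspondence between simply connected Lie groups and their Lie algebras.
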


\begin{proof}
Using the symmetrization process of Belgun (\cite{Bel}) together with results in \cite{S2}, one can produce a left invariant Riemannian metric $\tilde{g}$ on $G$ such that 
$(J,\tilde{g})$ is again a Vaisman structure. This gives rise to a Vaisman structure on $\operatorname{Lie}(G)$ and, taking into account Theorem \ref{Vaisman-imag.puras} and the fact that $G$ is completely solvable, we have 
that $G$ is a nilpotent Lie group. It follows from \cite{S} that $G=H_{2n+1}\times\R$.
\end{proof}

\medskip

In the next section we will show the existence of Vaisman structures on solvmanifolds $\Gamma\backslash G$ where $G$ is not completely solvable, in any even dimension. 

\

\section{Examples}\label{examples}

Using the results in the previous sections we will construct many examples of unimodular solvable non-nilpotent Lie algebras equipped with Vaisman structures. In particular, we 
will provide two infinite families of such examples, and also the classification of such Lie algebras in dimensions $4$ and $6$. We will also show the existence of families of
lattices in these examples, obtaining in this way compact Vaisman solvmanifolds which are not diffeomorphic to the well known nilmanifolds arising from the Heisenberg groups 
(Example \ref{heisenberg}). 

\smallskip

Firstly, we would like to prove a general result concerning lattices in semidirect products, which will be frequently used later in this section.

\begin{lema}\label{lattices-semidirecto}
Let $H$ be a simply connected Lie group equipped with a Lie group homomorphism $\phi: \R\to \operatorname{Aut}(H)$, and let $G=\R\ltimes_\phi H$ be the corresponding semidirect 
product. Let $\Gamma$ be a lattice in $H$. If 
there exists $a\in\R$, $a\neq0$, such that $\phi(a)(\Gamma)\subset\Gamma$, then $a\Z \ltimes_\phi \Gamma$ is a lattice in $G$.
\end{lema}

\begin{proof}
If $a\in\R$, $a\neq0$, satisfies $\phi(a)(\Gamma)\subset\Gamma$, then $\phi(ak)(\Gamma)\subset\Gamma$ for all $k\in\Z$ and therefore the semidirect product 
$\tilde{\Gamma}:=a\Z\ltimes_\phi \Gamma$ is well defined. Clearly, $\tilde{\Gamma}$ is a discrete subgroup of $G$. We only have to show that it is co-compact. 

Let us recall that two elements $(t_1,g_1)$ and $(t_2,g_2)$ in $G$ belong to the same right-coset with respect to $\tilde{\Gamma}$  if and only if there exist $k\in\Z$ 
and 
$\gamma\in\Gamma$ such that 
\begin{align}\label{t-gamma}
(t_2,g_2) =& (ak,\gamma)(t_1,g_1) \\
=& (ak + t_1, \gamma \phi(a)^kg_1). \nonumber
\end{align}

We may assume $a>0$. Let $\{a_n\}$ be a sequence in $\tilde{\Gamma}\backslash G$, with $a_n=[(t_n,g_n)]$. It follows easily from \eqref{t-gamma} that we can choose $t_n\in[0,a]$ for all $n\in\N$; 
moreover, as $\Gamma\backslash H$ is compact, we may assume that $[g_n]$ converges to $[g]$ in $\Gamma\backslash H$, for some $g\in H$.

The canonical projection $\pi: H\to \Gamma\backslash H$ is a local diffeomorphism, therefore we can choose a representative $g'_n$ of $[g_n]$ such that $g'_n$ converges to $g$ in 
$H$. Taking into account that also $[0,a]$ is compact, we may assume that $a_n=[(t_n,g'_n)]$ satisfies $t_n\to t$ in $[0,a]$ and $g'_n\to g$ in $H$. It is easy to see that 
$[(t_n,g'_n)]$ converges to $[(t,g)]$ in $\tilde{\Gamma}\backslash G$. Since $\{a_n\}$ is arbitrary, we thus obtain that  $\tilde{\Gamma}\backslash 
G$ is compact.
\end{proof}

\

\subsection{Example 1:}\label{ex-1}

We start with the abelian Lie algebra $\k=\mathbb{R}^{2n}$ with its canonical K\"ahler flat structure $(J,\pint)$. 
Let $\{e_1,f_1,\dots,e_n,f_n\}$ be an orthonormal basis of $\k$ where $Je_i=f_i$ and let $\omega=\sum_{i=1}^n e^i\wedge f^i$ be the fundamental form. 
Then we consider the central extension 
\[ \k_\omega(B)=\R B\oplus_\omega\k,\]
which is easily seen to be isomorphic to $\h_{2n+1}$, the $(2n+1)$-dimensional Heisenberg Lie algebra.

Next,  we consider the double extension
\[\g=\k(D,\omega)=\R A\ltimes_D\mathfrak{h}_{2n+1},\]
where the action is given by 
\begin{equation}\label{D} 
D=\left(\begin{array}{cccccc}       
0 &      &      &        &      &     \\
  &   0  & -a_1 &        &      &     \\
  &  a_1 & 0    &        &      &     \\
  &      &      & \ddots &      &     \\
  &      &      &        &   0  & -a_n\\
  &      &      &        &  a_n &  0  \\
\end{array}\right), \end{equation}
in the basis $\{B,e_1,f_1,\dots,e_n,f_n\}$ where $[e_i,f_i]=B$, for some $a_i\in\R$. Since $D|_{\k}$ satisfies the conditions of Theorem \ref{Main Theorem-B}, $\g$ admits a 
Vaisman structure. 

\smallskip

Notice that there is no loss of generality when we assume that $D$ is given as in \eqref{D}. Indeed, it can be seen that given any unitary derivation $D'$ of $(\R^{2n},J,\pint)$ there exists an orthonormal basis $\{e_1,f_1,\dots,e_n,f_n\}$ of $\R^{2n}$ such that $Je_i=f_i$ and $D'=D|_\k$ with $D$ is as in \eqref{D}.
 
\smallskip

If $a_i=0$ for all $i=1,\ldots,n$, then we recover the well known Vaisman structure on $\R\times\h_{2n+1}$ (see Example \ref{heisenberg}).

Let us consider from now on the case when not all the $a_i$'s vanish. We may reorder the elements of the basis of $\g$, if necessary, and we may assume that the 
constants $a_i$ satisfy $a_1\leq a_2\leq\cdots \leq a_n$. We will denote this Lie algebra by $\g_{(a_1,\ldots,a_n)}$. Note that 
$\g_{(a_1,\ldots,a_n)}$ is an almost nilpotent Lie algebra, i.e, it contains a codimension one nilpotent ideal. In particular, the nilradical of $\g_{(a_1,\ldots,a_n)}$ is the nilpotent ideal $\h_{2n+1}$. The associated simply connected Lie group will be denoted $G_{(a_1,\ldots,a_n)}$ and, when $0<a_1\leq 
a_2\leq\cdots \leq a_n$, it is known as an \textit{oscillator group}. Oscillator groups have many geometric properties, for instance, it was proved in \cite{M} that they are the 
only simply connected, non simple Lie groups which admit an indecomposable bi-invariant Lorentz metric. Other geometric features of oscillator groups in dimension $4$ have been 
studied in \cite{COS}. In this example we have shown: 

\smallskip

\begin{teo}
 Any solvmanifold of a group $G_{(a_1,\ldots,a_n)}$ admits invariant Vaisman structures. In particular any oscillator solvmanifold admits this kind of structures.
\end{teo}

\medskip

Lattices in oscillator groups have been characterized in \cite{Fi}. Here we will provide an explicit construction of some families of lattices in $G_{(a_1,\ldots,a_n)}$, and later 
we will determine the first homology group and first Betti number of the associated solvmanifolds.

We begin by recalling the following result, which deals with the isomorphism classes of these Lie algebras.

\begin{lema}[\cite{MR}]\label{rescaling}
Let $\g_{(a_1,\ldots,a_n)}$ and $\g_{(b_1,\ldots,b_n)}$ be two Lie algebras as defined above. If there exists $c\in\R-\{0\}$ such that $a_j=cb_j$ for all $j=1,\ldots,n$ then 
$\g_{(a_1,\ldots,a_n)} \cong \g_{(b_1,\ldots,b_n)}$. 
\end{lema}


\

The Lie group $G_{(a_1,\ldots,a_n)}$ can be described as follows. We consider the Lie group homomorphism $\varphi: \R \longrightarrow 
\operatorname{Aut}(H_{2n+1})$ given by
\[ \varphi(t)=e^{tD}=\left(\begin{array}{cccccc}       
1 &             &           &        &           &     \\
  &   \cos(a_1t) &-\sin(a_1t) &        &           &     \\
  &   \sin(a_1t) & \cos(a_1t) &        &           &     \\
  &             &           & \ddots &           &     \\
  &             &           &        & \cos(a_nt) &-\sin(a_nt)\\
  &             &           &        & \sin(a_nt) &  \cos(a_nt)\\
\end{array}\right)  \]
where $H_{2n+1}$ is the $(2n+1)$-dimensional Heisenberg Lie group, i.e. the Euclidean manifold $\R^{2n+1}$ equipped with the following product:
\[(z,x_1,y_1,\dots,x_n,y_n)\cdot (z',x'_1,y'_1,\dots,x'_n,y'_n)=(z+z'+\frac 12 \sum_{j=1}^n(x_jy'_j-x'_jy_j),x_1+x'_1,\dots,y_n+y'_n).\]
Then $G_{(a_1,\ldots,a_n)}$ is the semidirect product $G_{(a_1,\ldots,a_n)}=\R\ltimes_\varphi H_{2n+1}$.

\medskip 

We show next the existence of lattices for some choice of the parameters $a_i$ (compare \cite{MR}):

\begin{prop}\label{lattices-oscillator}
If $a_i\in\Q$ for $i=1,\ldots,n$, then $G_{(a_1,\ldots,a_n)}$ admits lattices. 
\end{prop}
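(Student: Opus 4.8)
The plan is to reduce everything to the general criterion of Lemma \ref{lattices-semidirecto}. Since $G_{(a_1,\ldots,a_n)}=\R\ltimes_\varphi H_{2n+1}$, it suffices to exhibit a lattice $\Gamma$ in the Heisenberg group $H_{2n+1}$ together with a real number $a\neq 0$ satisfying $\varphi(a)(\Gamma)\subset\Gamma$; the lemma then delivers $a\Z\ltimes_\varphi\Gamma$ as a lattice in $G_{(a_1,\ldots,a_n)}$, which is exactly the conclusion. So the whole proof splits into two tasks: fixing a good lattice in $H_{2n+1}$, and choosing $a$ adapted to the rotations $\varphi(a)$.

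First I would take the standard lattice $\Gamma=\{(z,x_1,y_1,\ldots,x_n,y_n): 2z\in\Z,\ x_i,y_i\in\Z\}$, i.e.\ half-integer center coordinate and integer remaining coordinates. A direct check with the given product formula shows $\Gamma$ is closed under multiplication and inversion: the only point to notice is that $\tfrac12\sum_{j}(x_jy'_j-x'_jy_j)\in\tfrac12\Z$ whenever all the $x_j,y_j,x'_j,y'_j$ are integers. Thus $\Gamma$ is a discrete, cocompact subgroup, hence a lattice in $H_{2n+1}$. Next I would produce $a$. The automorphism $\varphi(a)=e^{aD}$ fixes $z$ and rotates each pair $(x_i,y_i)$ by the angle $a_ia$; such a rotation preserves $\Z^2$ exactly when its matrix has integer entries, i.e.\ when $a_ia\in\frac{\pi}{2}\Z$. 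Here is where rationality enters: writing the $a_i$ over a common denominator, $a_i=m_i/N$ with $m_i\in\Z$ and $N\in\N$, and setting $a=\frac{\pi N}{2}$, one gets $a_ia=\frac{m_i\pi}{2}\in\frac{\pi}{2}\Z$ for all $i$ simultaneously. Hence each block of $\varphi(a)$ is an integer (signed-permutation) matrix, so $\varphi(a)(\Gamma)\subset\Gamma$.

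With these two ingredients in hand, applying Lemma \ref{lattices-semidirecto} with $H=H_{2n+1}$, $\phi=\varphi$, and the $\Gamma$ and $a$ just constructed immediately yields that $a\Z\ltimes_\varphi\Gamma$ is a lattice in $G_{(a_1,\ldots,a_n)}$, completing the argument. The main obstacle, and the only place where the hypothesis $a_i\in\Q$ is genuinely needed, is finding a \emph{single} $a$ that forces all $n$ rotation angles $a_ia$ into $\frac{\pi}{2}\Z$ at once; the common-denominator choice handles this cleanly. If instead some ratio $a_i/a_j$ were irrational, no such $a$ would exist for this naive construction, and one would have to appeal to the finer characterization of lattices in oscillator groups from \cite{Fi}.
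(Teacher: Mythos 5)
Your argument is correct, but it proves the proposition by a different route than the paper does. The paper's own proof does not construct a lattice directly: it observes that $\varphi(t_0)$ is an integer matrix for $t_0=2\pi\prod q_i$ (where $a_i=p_i/q_i$), notes that the structure constants of $\g_{(a_1,\ldots,a_n)}$ are rational, and then invokes the general existence criterion for lattices in almost nilpotent Lie groups from \cite{B} (see also \cite{FOU}). You instead give a self-contained construction: the standard lattice $\Gamma=\tfrac12\Z\times\Z^{2n}$ in $H_{2n+1}$, the choice $a=\tfrac{\pi N}{2}$ with $N$ a common denominator of the $a_i$ so that every block of $\varphi(a)$ is a signed permutation matrix preserving $\Z^2$ while the central coordinate is fixed, and then Lemma \ref{lattices-semidirecto}. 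All the verifications you indicate (closure of $\Gamma$ under the Heisenberg product, integrality of the rotation blocks exactly when the angles lie in $\tfrac{\pi}{2}\Z$) are sound, and note that $\varphi(a)$ actually maps $\Gamma$ \emph{onto} $\Gamma$, which is what the lemma implicitly needs to handle negative powers. What each approach buys: the paper's appeal to \cite{B} is shorter and avoids any explicit construction, but it is a black box; your construction is elementary, uses only a lemma already proved in the paper, and produces concrete lattices — indeed it essentially anticipates the explicit families $\Lambda_{k,\pi/2}$, $\Lambda_{k,\pi}$, $\Lambda_{k,2\pi}$ that the paper builds immediately after the proposition (there the authors first normalize to $a_i\in\Z$ via Proposition \ref{rescaling}, whereas you absorb the denominators into the time parameter $a$, which amounts to the same rescaling). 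Your closing remark about irrational ratios $a_i/a_j$ is also consistent with the paper's reference to the finer classification in \cite{Fi}.
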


\begin{proof}
If $a_i\in\mathbb{Q}$, then $a_i=\frac{p_i}{q_i}$ for some $p_i\in\Z,\,q_i\in\N$ 
with $(p_i,q_i)=1$. Setting $t_0:=2\pi\prod q_i$, we obtain that $\varphi(t_0)$ is an integer matrix. Moreover, the structure constants of $\g_{(a_1,\ldots,a_n)}$ corresponding to 
the basis $\{B,e_1,f_1,\dots,e_n,f_n\}$ are all rational. As $G_{(a_1,\ldots,a_n)}$ is an almost nilpotent Lie group, it follows from \cite{B} (see also \cite{FOU}) that 
$G_{(a_1,\ldots,a_n)}$ 
admits lattices.
\end{proof}

\medskip

Therefore we will consider $a_i\in\Q$ for all $i=1,\ldots, n$; moreover, it follows from Lemma \ref{rescaling} that we may assume $a_i\in\Z$ for all $i=1,\ldots, n$, 
with $\operatorname{gcd}(a_1,\ldots,a_n)=1$.

\smallskip

Beginning with a lattice in $H_{2n+1}$ we may extend it to a lattice in $G_{(a_1,\ldots,a_n)}$. Consider the following lattices in $H_{2n+1}$: for each $k \in \N$ there exists a 
lattice $\Gamma_k$ in $H_{2n+1}$ given by $\Gamma_{k}=\frac{1}{2k}\Z\times\Z\times\cdots\times\Z$. It can be shown that $\Gamma_k/[\Gamma_k, \Gamma_k]$ is  isomorphic to $\mathbb 
Z^{2n}\oplus \mathbb Z_{2k}$. Hence, $\Gamma_r$ and $\Gamma_s$ are non-isomorphic for $r\neq s$. 

Any lattice $\Gamma_k$ in $H_{2n+1}$ is in\-va\-rian\-t under the subgroups generated by $\varphi(\pi/2)$, $\varphi(\pi)$ and $\varphi(2\pi)$. According to Lemma 
\ref{lattices-semidirecto} we have three families of lattices in $G_{(a_1,\ldots,a_n)}$:
\begin{align*}
 \Lambda_{k,\frac{\pi}{2}} & =  \frac{\pi}{2}\Z\ltimes_\varphi\Gamma_k,\\
 \Lambda_{k,\pi} & =  \pi\Z\ltimes_\varphi\Gamma_k,\\
 \Lambda_{k,2\pi} & = 2\pi\Z\ltimes_\varphi\Gamma_k.
\end{align*}

\medskip

We analyze next some topological properties of the solvmanifolds $\Lambda_{k,j}\backslash G_{(a_1,\ldots,a_n)}$ for $j=2\pi,\pi,\pi/2$:

\

$\bullet$ Note that $\varphi(2\pi)=\operatorname{Id}$, therefore $\Lambda_{k,2\pi}=2\pi\Z\times\Gamma_k$, which is isomorphic to a lattice in $\R\times H_{2n+1}$. According to 
Corollary \ref{mostow}, we have that the solvmanifold $\Lambda_{k,2\pi}\backslash G_{(a_1,\ldots,a_n)}$ is isomorphic to the nilmanifold $S^1\times \Gamma_k\backslash H_{2n+1}$, 
for any choice of $(a_1,\ldots,a_n)$. It is easy to see that the first homology group of this nilmanifold is $\Z^{2n+1}\oplus\Z_{2k}$ and hence its first Betti number is 
$b_1=2n+1$.

\

$\bullet$ For the family $\Lambda_{k,\pi}$, note that the isomorphism class of this lattice depends only on the parity of the integers $a_j$, therefore according to Corollary 
\ref{mostow}, 
we may assume that $a_j\in \{0,1\}$ for all $j$, not all of them equal to 0. After reordering, we have that there exists $p\in\{0,1,\ldots,n-1\}$ such that $a_j=0$ if $j\leq p$ 
and $a_j=1$ if $j>p$. Then it can be seen that 
\begin{align*} 
 [\Lambda_{k,\pi},\Lambda_{k,\pi}] & = \{ (0,r,s_1,t_1,\ldots,s_n,t_n)\in\Lambda_{k,\pi}\,: r\in\Z;\;   \\
                                   & \qquad \qquad  s_j=t_j=0\, (j=1,\ldots,p); \, s_j,t_j\in 2\Z\, (j=p+1,\ldots,n)\}, 
\end{align*}
thus the first homology group of the solvmanifold $\Lambda_{k,\pi}\backslash G_{(a_1,\ldots,a_n)}$ is 
\[ \Lambda_{k,\pi}/[\Lambda_{k,\pi},\Lambda_{k,\pi}] \cong \Z\oplus\Z_{2k}\oplus (\Z\oplus \Z)^p \oplus  (\Z_2\oplus \Z_2)^{n-p}.\]
The first Betti number is $b_1=2p+1$.

\

$\bullet$ For the family $\Lambda_{k,\pi/2}$, note that the isomorphism class of this lattice depends only on the congruence class of the integers $a_j$ modulo 4, therefore 
according to 
Corollary \ref{mostow}, we may assume that $a_j\in \{0,1,2,3\}$ for all $j$, not all of them even. After reordering, we have that there exist $c,d\in\{0,1,\ldots,n-1\}$ with 
$0\leq c+d\leq n-1$ such that $a_1=\cdots=a_c=0$, $a_{c+1}=\cdots=a_{c+d}=2$ and $a_j\in\{1,3\}$ for $j>c+d$. It can be seen that 
\begin{align*}
 [\Lambda_{k,\frac{\pi}{2}},\Lambda_{k,\frac{\pi}{2}}] & = \{(0,r,s_1,t_1,\ldots,s_n,t_n)\in\Lambda_{k,\frac{\pi}{2}}\,: r\in\Z,\;  s_j=t_j=0\, (j=1,\ldots,c);  \\
                                                      & \qquad \qquad \quad  s_j,t_j\in 2\Z\, (j=c+1,\ldots,c+d);\; s_j+t_j\in 2\Z \,(j>c+d) \}, 
\end{align*}
thus the first homology group of the solvmanifold $\Lambda_{k,\frac{\pi}{2}}\backslash G_{(a_1,\ldots,a_n)}$ is 
\[ \Lambda_{k,\frac{\pi}{2}}/[\Lambda_{k,\frac{\pi}{2}},\Lambda_{k,\frac{\pi}{2}}] \cong \Z\oplus\Z_{2k}\oplus (\Z\oplus \Z)^c \oplus (\Z_2\oplus \Z_2)^d \oplus (\Z_2)^{n-(c+d)}.\]
The first Betti number is $b_1=2c+1$.

\

Clearly, the first homology groups of a solvmanifold in one of the last two families is not isomorphic to the first homology group of the nilmanifolds in the first family. 
Therefore, the solvmanifolds in the last two families are not diffeomorphic to the nilmanifolds $S^1\times \Gamma_k\backslash H_{2n+1}$.

Note that for any $n\in \N$ and $r\in\{0,1,\ldots,n-1\}$, we can find a $(2n+2)$-dimensional Vaisman solvmanifold $\Lambda_{k,j}\backslash G_{(a_1,\ldots,a_n)}$ with first Betti 
number $b_1=2r+1$. 

It is clear that the solvmanifolds we have just constructed admit a Riemannian submersion to the circle $S^1$, with fibers isometric to the Sasakian nilmanifold 
$\Gamma_k\backslash H_{2n+1}$.

\medskip

\begin{rem}
In \cite{MP} the authors provide examples of compact Vaisman manifolds which are obtained as the total spaces of a principal $S^1$-bundle over coK\"ahler manifolds, and they show 
that they are diffeomorphic to solvmanifolds. The Lie algebras associated to these solvmanifolds are isomorphic to some $\g_{(a_1,\ldots,a_n)}$, but the lattices that they 
consider are different from ours.
\end{rem}

\bigskip

\subsection{Example 2}\label{ex-2}
We start with a K\"ahler flat Lie algebra $(\mathfrak k, J, \pint)$ such that $\dim\h=1$, where  $\k=\mathfrak{z}\oplus\mathfrak{h}\oplus\k'$ is the orthogonal 
decomposition of $\k$ given by Proposition \ref{gplana}. Let $\mathfrak{h}=\mathbb{R}H$ with $|H|=1$, and let us set $2m:=\dim \k'$ and $2l+1:=\dim\z$.
According to Proposition \ref{Jkahler-sii}, if $Z:=JH$ then $Z\in\z$ and there exists a $J$-invariant subspace $\mathfrak u$ of $\z$ such 
that  $\mathfrak{z}=\mathbb{R}Z\oplus^\perp\u$. If $\omega$ denotes the fundamental 2-form of $(J,\pint)$, then it is easy to verify that the Sasakian central extension of $\k$ by 
$\omega$ can be decomposed as: 
\[\k_\omega(B)=\R H\ltimes_M(\R Z\times\h_{2(m+l)+1}),\]
where $M$ is the operator given by the following matrix
\[ M=\left(\begin{array}{cccccccc}      
0 & 0 &  &    &      &        &      &     \\
1& 0 &  &    &      &        &      &     \\
  &   & 0_{2l\times 2l} &    &      &        &      &     \\
  &   &  &  0  & -a_1 &       &      &     \\
  &   &  &a_1 & 0    &        &      &     \\
  &   &  &    &      & \ddots &      &     \\
  &   &  &    &      &        &   0  & -a_m\\
  &   &  &    &      &        &  a_m &  0  \\
\end{array}\right),\]
in an orthonormal basis $\{Z,B,e_1,f_1,\dots,e_l,f_l,u_1,v_1,\dots,u_m,v_m\}$ such that:  
$\{e_1,f_1,\dots,e_l,f_l\}$ is a basis of $\u$, $\{u_1,v_1,\dots,u_m,v_m\}$ is a basis of $\mathfrak k'$
and $Je_i=f_i$, $Ju_i=v_i$. Moreover, $[e_j,f_j]=B$ and $[u_j,v_j]=B$ for all $j$. It follows from Proposition \ref{gplana} that $a_j\neq 0$ for all $j=1,\ldots,m$.

\medskip

Let $\g$ be the double extension 
\[\g=\k(D,\omega)=\R A \ltimes_D(\R H\ltimes_M(\R Z\times\h_{2(m+l)+1})),\] 
where $D$ is the derivation of $\k_\omega(B)$ given by
\[ D= \begin{pmatrix}
0 &   &   &       &           &   &           &     \\
  & 0 &   &        &           &   &           &     \\
  &   & 0 &          &           &   &           &     \\
  &   &   & 0       & -\alpha_1 &   &           &     \\
  &   &   &\alpha_1 &   0       &   &           &     \\
  &   &   &        &         &\ddots&        &     \\
  &   &   &        &           &   &        0  & -\alpha_{m+l}\\
  &   &   &        &           &   &  \alpha_{m+l} &  0  \\
     \end{pmatrix}, \]
for some $\alpha_j\in\R$, in the basis $\{H,Z, B,e_1,f_1,\dots,e_l,f_l,u_1,v_1,\dots,u_m,v_m\}$.
Since $D|_{\k}$ satisfies the conditions of Theorem \ref{Main Theorem-B} we have that $\g$ admits a Vaisman structure.

\medskip

Now we study lattices in the simply connected Lie group $G$ associated to $\g$ when $a_i, \alpha_i\in\Z$ for any $i$. Set $n:=m+l$ and consider the Lie 
group homomorphism $\psi: \R\longrightarrow \operatorname{Aut}(\R\times H_{2n+1})$ given by
\[ \psi(t)=e^{tM}=\begin{pmatrix}       
1 & 0 &               &            &             &         &            &            \\
t & 1 &               &            &             &         &            &            \\
  &   & \I_{2l\times 2l} &            &             &         &            &            \\
  &   &               & \cos(a_1t) & -\sin(a_1t) &         &            &            \\
  &   &               & \sin(a_1t) &  \cos(a_1t) &         &            &            \\
  &   &               &            &             &  \ddots &            &            \\
  &   &               &            &             &         & \cos(a_mt) & -\sin(a_mt)\\
  &   &               &            &             &         & \sin(a_mt) &  \cos(a_mt)\\
\end{pmatrix}.\]

On $\R^{2n+3}$ consider the algebraic structure given by the semidirect product of $\R$ and $\R\times H_{2n+1}$ via $\psi$ and we obtain the simply 
connected Lie group 
\[S=\R\ltimes_\psi (\R\times H_{2n+1}).\]
 
If $\Gamma_k$ is the lattice in $H_{2n+1}$ considered in the examples in \S \ref{ex-1}, then the group $L_k=a\Z\times\Gamma_k$ is a lattice in $\R\times H_{2n+1}$ for any 
$a\in\R$, $a\neq0$. In particular, according to Lemma \ref{lattices-semidirecto}, for each $j= 2\pi, \pi, \frac{\pi}{2}$ we obtain a lattice $\Gamma_{k,j}$ in $S$ defined by 
\[\Gamma_{k,j}=j\Z\ltimes(j^{-1}\Z\times\Gamma_k). \]

Now we consider the Lie group homomorphism $\varphi: \mathbb{R}\longrightarrow \operatorname{Aut}(S)$ given by
\[  \varphi(t)=e^{tD}=\left(\begin{array}{cccccccc}       
1  &   &  &           &                       &        &           &     \\
   & 1 &  &           &                       &        &           &     \\
   &   & 1&           &                       &        &           &     \\   
   &   &  & \cos(\alpha_1t) &-\sin(\alpha_1t) &        &           &     \\
   &   &  & \sin(\alpha_1t) & \cos(\alpha_1t) &        &           &     \\
   &   &  &          &                        & \ddots &           &     \\
   &   &  &         &                         &        & \cos(\alpha_nt) &-\sin(\alpha_nt)\\
   &   &  &         &                         &        & \sin(\alpha_nt) &  \cos(\alpha_nt)\\
\end{array}\right)  \]

On $\R^{2n+4}$ consider the algebraic structure given by the semidirect product of $\R$ and $S$ via $\varphi$ and we obtain the simply connected Lie group 
\[G=\R\ltimes_\varphi S.\]

Since $\Gamma_{k,j}$ is invariant under the subgroups generated by $\varphi(2\pi), \varphi(\pi), \varphi(\frac{\pi}{2})$, then for each $\Gamma_{k,j}$ we have three new lattices 
in 
$G$, given by
\[\Lambda_{k,j,i}=i\Z\ltimes\Gamma_{k,j},\]
for $i= 2\pi, \pi, \frac{\pi}{2}$. Therefore we get new examples of solvmanifolds 
\[M_{k,j,i}=\Lambda_{k,j,i}\backslash\R\ltimes_\varphi(\R\ltimes_\psi(\R\times H_{2n+1}))\]
equipped with a Vaisman structure arising from a Vaisman structure on the Lie algebra.

\medskip

The solvmanifolds we have just constructed admit a Riemannian submersion to the circle $S^1$, with fibers isometric to a Sasakian solvmanifold $\Gamma_{k,j}\backslash 
(\R\ltimes_\psi(\R\times H_{2n+1}))$.

\

\subsection{Classification of Vaisman Lie algebras in low dimensions}

In this section we determine all unimodular solvable Lie algebras of dimension $4$ and $6$ that admit a Vaisman structure, up to Lie algebra isomorphism.

\

(i) According to Theorem \ref{Main Theorem-A}, any $4$-dimensional unimodular solvable Lie algebra with a Vaisman structure is a double extension $\g=\k(D,\omega)$, where $\k=\R^2$.  It is 
easy to see that $\g$ has an orthonormal basis $\{A,B,e,f\}$ such that
\[ [A,e]=cf,\quad [A,f]=-cf, \quad [e,f]=B, \quad JA=B, \; Je=f, \qquad c\in\R.\]
These Lie algebras belong to the family of examples constructed in \S\ref{ex-1}. If $c=0$, then $\g$ is isomorphic to the nilpotent Lie algebra $\R\times\h_3$, equipped with its canonical Vaisman structure (see Example \ref{heisenberg}), and any nilmanifold obtained as a quotient of $\R\times 
H_3$ is a primary Kodaira surface. If $c\neq0$, then we may assume $c=1$ (via a Lie algebra isomorphism), and $\g$ is a solvable non-nilpotent Lie algebra $\R\ltimes\h_3$, and any solvmanifold obtained as a quotient of $\R\ltimes H_3$ is  either a primary or secondary Kodaira surface, depending on the lattice considered (see \cite{H} and \S\ref{ex-1}). 

\

(ii) According to Theorem \ref{Main Theorem-A}, any $6$-dimensional unimodular solvable Lie algebra with a Vaisman structure is a double extension $\g=\k(D,\omega)$, where $\k$ is 
a $4$-dimensional K\"ahler flat Lie algebra which decomposes orthogonally as $\k=\mathfrak{z}\oplus\mathfrak{h}\oplus\k'$, according to Proposition 
\ref{gplana}. In particular, the dimension of $\mathfrak{k'}$ must be $0$ or $2$ and, as a consequence, $\dim\mathfrak{h}$ is equal to $0$ or $1$. 
Therefore there are only two cases for $\k$, namely, $\k_1=\z=\R^4$ or $\k_2=\R Z\oplus\R H\oplus\R^2$ for some $Z\in\z,\, H\in\h$ with $JH=Z$. In the latter case, the action of $H$ on 
$\k'$ is given by a matrix 
$\begin{pmatrix}
 0 & -a \\ a & 0
\end{pmatrix}$, with $a\neq 0$, and it is easy to see that these Lie algebras are all isomorphic to the one with $a=1$. The Sasakian central extension of $\k_1$ 
is isomorphic to $\h_5$, while the Sasakian central extension of $\k_2$ is the Lie algebra $\mathfrak s_5$ generated by $\{B,Z,H,e,f\}$ with Lie brackets   
\[ [H,e]=f, \; [H,f]=-e,\; [H,Z]=[e,f]=B,\] 
where $\{e,f\}$ is an orthonormal basis of $\R^2$ with $Je=f$. It is a consequence of Corollary \ref{sasaki-centro2} that $\h_5$ and $\mathfrak{s}_5$ are the only unimodular solvable 
Lie algebras of dimension $5$ which admit Sasakian structures (this follows also from \cite{AFV}).

\medskip

The corresponding unimodular solvable Vaisman Lie algebras, obtained as double extensions of $\k_1$ and $\k_2$, are given in the next result. Note that the double extensions 
$\k_1(D,\omega)$ belong to the family in \S\ref{ex-1}, while the double extensions $\k_2(D,\omega)$ belong to the family in \S\ref{ex-2}.

\begin{prop}\label{dim6}
	Let $\g$ be a $6$-dimensional unimodular solvable Lie algebra. If $\g$ admits a Vaisman structure, then $\g$ is isomorphic to one of the following Lie algebras: 
	\begin{enumerate}[(i)]
	\item	$\R\times\h_5$ or $\R\ltimes_{D_r}\h_5$ for $r\in[-1,1]$, where $\h_5=\text{span}\{B,e_1,e_2,e_3,e_4\}$, with Lie brackets given by: $[e_1,e_2]=[e_3,e_4]=B$ and 
	\[  D_r=\left(\begin{array}{ccccc}      
	0    &    &              &      &     \\
	&  0 & -r            &      &     \\
	&  r & 0            &      &     \\
	&    &              &   0  & -1   \\
	&    &              &  1  &  0  \\
	\end{array}\right)\]
	in this basis. 
	
    \item $\R\times\mathfrak{s}_5$ or $\R\ltimes_{D_0}\mathfrak{s}_5$ where $\mathfrak{s}_5=\text{span}\{B,e_1,e_2,e_3,e_4\}$, with Lie brackets given by: $[e_1,e_3]=e_4$, $[e_1,e_4]=-e_3$, $[e_1,e_2]=[e_3,e_4]=B$ and $D_0$ is as in $(i)$ with $r=0$.
	\end{enumerate}
	Moreover, all these Lie algebras are pairwise non-isomorphic and the corresponding 
simply connected Lie groups admit lattices (with $r\in\mathbb{Q}$ for $\R\ltimes_{D_r}\mathfrak{h}_5$).
\end{prop}

\begin{proof}
It is easy to see, taking into account Lemma \ref{rescaling}, that $\k_1(D,\omega)$ is isomorphic to either $\R\times\h_5$ or $\R\ltimes_{D_r}\h_5$ with 
$r\in[-1,1]$. It can be verified that these Lie algebras are pairwise non-isomorphic. 
On the other hand, for $\k_2(D,\omega)$, using Theorem \ref{base linda} it can be easily seen that this Lie algebra is isomorphic to either $\R\times\mathfrak{s}_5$ or 
$\R\ltimes_{D_0}\mathfrak{s}_5$. 
Using Proposition \ref{g'-n} and after some computations, it can be seen that $\R\times\mathfrak{s}_5$ or $\R\ltimes_{D_0}\mathfrak{s}_5$ are not isomorphic to neither $\R\times\h_5$ nor 
$\R\ltimes_{D_r}\h_5$.

Note that $\R\ltimes_{D_r}\h_5$ corresponds to the Lie algebra $\g_{(r,1)}$ with $r\in[-1,1]$ studied in \S\ref{ex-1}. Therefore the existence of lattices for the group $G_{(r,1)}$ with $r\in \Q$ was established in Proposition \ref{lattices-oscillator}. In the case of $\R\times\mathfrak{s}_5$ and 
$\R\ltimes_{D_0}\mathfrak{s}_5$ the existence of lattices in the associated simply connected Lie group follows from \S\ref{ex-2} (see also \cite{AFV}).
\end{proof}

\medskip

It follows from Lemma \ref{rescaling} that the Lie algebra $\g_{(r,1)}=\R\ltimes_{D_r}\h_5,\, r\in\Q,$ from Proposition \ref{dim6} is isomorphic to $\g_{(a,b)}$ for 
some $a,b\in\Z$, $(a,b)=1$. Next we determine the first homology group and the first Betti number of the $6$-dimensional solvmanifolds 
$\Lambda_{k,j}\backslash G_{(a,b)}$ for $j=2\pi,\pi,\pi/2$, following \S \ref{ex-1}.

\medskip

$\bullet$ For the familiy $\Lambda_{k,2\pi}=2\pi\Z\times\Gamma_k$, the solvmanifold $\Lambda_{k,2\pi}\backslash G_{(a,b)}$ is diffeomorphic to a nilmanifold $S^1\times 
\Gamma_k\backslash H_5$, for all $(a,b)$. The first homology group of this nilmanifold is $\Z^5\oplus\Z_{2k}$ and hence its first Betti number is $b_1=5$.

\

$\bullet$ For the family $\Lambda_{k,\pi}$, the isomorphism class of this lattice depends only on the parity of the integers $a, b$. Then it 
is straightforward to verify that the first homology group of the solvmanifold $M:=\Lambda_{k,\pi}\backslash G_{(a,b)}$ and its first Betti number are given by:  
\begin{table}[h]
\begin{center}
\renewcommand{\arraystretch}{1.5}
\begin{tabular}{|c|c|c|}\hline
  $a,b$           &      $H_1(M,\Z)$                  & $b_1(M)$ \\ \hline
 $ab\equiv 1\, (\text{mod } 2)$ & $\Z\oplus\Z_{2k}\oplus \Z_2^4$    &   1      \\ \hline
 $ab\equiv 0\, (\text{mod } 2)$ & $\Z^3\oplus\Z_{2k}\oplus \Z_2^2$  &   3      \\ \hline
\end{tabular} 
\end{center}
\end{table}

\medskip

$\bullet$ For the family $\Lambda_{k,\pi/2}$, the isomorphism class of this lattice depends only on the congruence class of the integers $a, b$ modulo 4, and it can be seen that 
the first homology group of the solvmanifold $M:=\Lambda_{k,\frac{\pi}{2}}\backslash G_{(a,b)}$ and its first Betti number are given by: 
\begin{table}[h]
\begin{center}
\renewcommand{\arraystretch}{1.5}
\begin{tabular}{|c|c|c|}\hline
  $a,b$              &      $H_1(M,\Z)$                  & $b_1(M)$ \\ \hline
 $ab\equiv \pm1\, (\text{mod } 4)$ & $\Z\oplus\Z_{2k}\oplus \Z_2^2$    &   1      \\ \hline
 $ab\equiv 2\, (\text{mod } 4)$    & $\Z\oplus\Z_{2k}\oplus \Z_2^3$    &   1      \\ \hline
 $ab\equiv 0\, (\text{mod } 4)$    & $\Z^3\oplus\Z_{2k}\oplus \Z_2$    &   3      \\ \hline
\end{tabular} 
\end{center}
\end{table}

\

\section{Relation with other geometric structures}

\medskip

\subsection{CoK\"ahler Lie algebras}

In previous sections we have seen that any unimodular solvable Lie algebra admitting a Vaisman structure is the semidirect product of $\R$ with a Sasakian Lie algebra and this 
Sasakian Lie algebra, in turn, is a central extension of a K\"ahler flat Lie algebra. In what follows we will establish a relation with another type of almost contact metric Lie 
algebras, namely, coK\"ahler ones. In fact, we will show that a unimodular solvable Lie algebra admitting a Vaisman structure is a central extension of a coK\"ahler Lie 
algebra which is, in turn, a semidirect product of $\R$ with a K\"ahler flat Lie algebra.

\

Let $\g$ be a unimodular solvable Lie algebra that admits a Vaisman structure $(J,\pint)$, with $\omega$ its fundamental 2-form, $\theta$ the corresponding Lee form and $A\in\g$ 
its metric dual, as before. We know from Theorem \ref{Main Theorem-A} that $\g$ is a double extension $\g=\k(D,\omega')$ of the K\"ahler flat Lie algebra $\k$ by certain 
derivation $D$ of $\k_{\omega'}(\xi)$ such that $D':=D|_\k$ is a unitary derivation of $\k$. 

\begin{teo}\label{cokahler}
With notation as above, the Lie algebra $\mathfrak d=\R A\ltimes_{D'}\k$ admits a coK\"ahler structure $(\pint|_{\mathfrak d\times\mathfrak d},\phi, \xi, \eta)$, where 
$\phi\in\operatorname{End}({\mathfrak d})$ is defined by $\phi(aA+x)=Jx$ for $a\in\R,\,x\in\k$, and  $\eta:=\theta|_{\mathfrak d}$, $\xi:=A$. Moreover, if $\Phi$ denotes 
the (closed) fundamental  2-form on $\mathfrak d$ associated to this coK\"ahler structure, then $\g$ is isomorphic to the central extension $\mathfrak d_{\Phi}(JA)$.
\end{teo}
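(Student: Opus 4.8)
The plan is to verify the three defining conditions of a coK\"ahler structure in turn---that $(\pint|_{\mathfrak d\times\mathfrak d},\phi,\xi,\eta)$ is almost contact metric, that $d\eta=d\Phi=0$, and that it is normal (so $N_\phi=0$, since $d\eta=0$)---and then to exhibit the isomorphism $\g\cong\mathfrak d_\Phi(JA)$ directly. Throughout I would use the explicit bracket of $\mathfrak d=\R A\ltimes_{D'}\k$, namely $[A,x]_{\mathfrak d}=D'x$ and $[x,y]_{\mathfrak d}=[x,y]_\k$ for $x,y\in\k$, together with the data recorded in Theorem \ref{Main Theorem-A}: that $\k$ is K\"ahler flat, that $J$ preserves $\k$ (Proposition \ref{Jkahler-sii}), that $\omega'$ is closed, and that $D'$ is a skew-symmetric derivation of $\k$ commuting with $J$.

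First I would dispatch the easy conditions. Since $\eta=\langle A,\cdot\rangle$ and $|A|=1$ we get $\eta(\xi)=1$ at once; because $J$ preserves $\k$, $\phi^2(aA+x)=J^2x=-x=(-\I+\eta\otimes\xi)(aA+x)$, and the compatibility of $J$ with $\pint$ yields the metric identity $\langle\phi x,\phi y\rangle=\langle x,y\rangle-\eta(x)\eta(y)$ (the $A$-component dropping out because $\phi A=0$). These mirror parts $(1)$--$(2)$ of Proposition \ref{casi_sasakiana}. The condition $d\eta=0$ is equally immediate: both $D'x$ and $[x,y]_\k$ lie in $\ker\eta=\k$, so $d\eta(A,x)=-\eta(D'x)=0$ and $d\eta(x,y)=-\eta([x,y]_\k)=0$.

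The heart of the proof is the verification of normality and of $d\Phi=0$; this is the step I expect to be the \emph{main obstacle}, although it runs entirely parallel to the LCK computation in the proof of Theorem \ref{Main Theorem-B}. For $N_\phi$ I would split into the cases $x,y\in\k$ and $x=A$. In the first, all brackets are $\k$-brackets and $N_\phi(x,y)$ collapses to the Nijenhuis tensor of $J|_\k$, which vanishes because $\k$ is a complex Lie algebra. In the second, a short computation gives $N_\phi(A,y)=-D'y-JD'(Jy)$, which vanishes precisely because $D'$ commutes with $J$. For $d\Phi$ I first note that $\Phi$ is $\omega'$ extended by zero along $\R A$, since $\Phi(A,\cdot)=0$ and $\Phi|_{\k\times\k}=\omega'$. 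Then $d\Phi(x,y,z)=d^\k\omega'(x,y,z)=0$ for $x,y,z\in\k$ (as $\k$ is K\"ahler), while $d\Phi(A,y,z)$ reduces to $-\omega'(D'y,z)+\omega'(D'z,y)$, which vanishes because $D'$ is skew-symmetric and commutes with $J$. Together with $N_\phi=0$ and $d\eta=d\Phi=0$, this shows the structure is coK\"ahler.

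Finally, to identify $\g$ with $\mathfrak d_\Phi(JA)$, I would observe that $\Phi$ is a closed $2$-form on $\mathfrak d$ (just shown), so the central extension is well defined, and that as vector spaces $\mathfrak d_\Phi(JA)=\R JA\oplus\R A\oplus\k=\g$. It then suffices to compare brackets on generators: $JA$ is central in both; $[A,x]_{\mathfrak d_\Phi(JA)}=\Phi(A,x)JA+D'x=D'x=[A,x]_\g$; and for $x,y\in\k$, using \eqref{corchetek}, $[x,y]_{\mathfrak d_\Phi(JA)}=\Phi(x,y)JA+[x,y]_\k=\omega'(x,y)JA+[x,y]_\k=[x,y]_\g$. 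Hence the identity map is a Lie algebra isomorphism $\g\to\mathfrak d_\Phi(JA)$, which completes the argument.
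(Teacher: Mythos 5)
Your proposal is correct and follows essentially the same route as the paper's proof: verify the almost contact metric identities, show $d\eta=0$ from $[\mathfrak d,\mathfrak d]\subset\k$, reduce $d\Phi$ and $N_\phi$ to $d^\k\omega'=0$, $N^\k_{J|_\k}=0$ and the relations ``$D'$ skew-symmetric, $D'J=JD'$'', and finally match the bracket of $\mathfrak d_\Phi(JA)$ with that of $\g$ via \eqref{corchetek}. The only cosmetic difference is that you split the $N_\phi$ and $d\Phi$ computations into the cases $x,y,z\in\k$ versus one argument equal to $A$, whereas the paper writes them out for general elements $aA+x$; by multilinearity these are the same calculation.
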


\begin{proof}
It is readily verified that $(\pint|_{\mathfrak d\times\mathfrak d},\phi, \xi, \eta)$ is an almost contact metric structure on $\mathfrak d$. Let us prove now that it is almost 
coK\"ahler, i.e., $d\eta=0$ and $d\Phi=0$, where $\Phi$ is the fundamental 2-form defined by $\Phi(x,y)=\la \phi x, y\ra$, $x,y\in\mathfrak d$. 

Since $[\mathfrak d,\mathfrak d]\subset \k=\ker\eta$, we have that $d\eta=0$. Now, for $a,b,c\in\R$ and $x,y,z\in\k$, we compute easily
\begin{align}\label{cosym} 
d\Phi(aA+x,bA+y,cA+z) & =d^\k \omega'(x,y,z)+a(\la D'y,Jz\ra-\la D'z,Jy\ra)\\ \nonumber
& \qquad +b(\la D'z,Jx\ra-\la D'x,Jz\ra)+c(\la D'x,Jy\ra-\la D'y,Jx\ra),
\end{align}
where $\omega'=\omega|_{\k\times \k}$. Since $\omega'$ is the K\"ahler form on $\k$, we have that $d^\k \omega'=0$. Since both $D$ and $J$ are skew-symmetric and 
$D'J|_\k=J|_\k D'$, we obtain that the last terms in \eqref{cosym} vanish, and therefore $d\Phi=0$.

To verify the normality of this structure, since $d\eta=0$ we only have to check that $N_\phi=0$. For $a,b\in\R$ and $x,y\in\k$, we compute
\[ N_\phi(aA+x,bA+y)=N^\k_{J|_\k}(x,y)-a(D'y+JD'Jy)+b(D'x+JD'Jx).\]
Since $N^\k_{J|_\k}=0$ and $D'J|_\k=J|_\k D'$, it follows that $N_\phi=0$.

To prove the last statement we compute the Lie bracket $[\cdot,\cdot]'$ on the central extension 
$\mathfrak d_{\Phi}(JA)$. We have that $JA$ is central and for $a,b\in\R$, $x,y\in\k$ we compute
\begin{align*}
[aA+x,bA+y]' & = \Phi(aA+x,bA+y)JA+[aA+x,bA+y]_{\mathfrak d} \\
& = \la \phi(aA+x),bA+y\ra JA + aD'y-bD'x+[x,y]_\k\\
& = \la Jx,y\ra JA + aDy-bDx+[x,y]_\k \\
& =\omega(x,y)JA+aDy-bDx+[x,y]_\k,
\end{align*}
which coincides with the Lie bracket on $\g$, according to Theorem \ref{JA-central} and \eqref{corchetek}. This completes the proof.
\end{proof}

\medskip

\begin{rem}\label{co-flat}
The first part of Theorem \ref{cokahler} follows also from \cite[Theorem 6.1]{FV}. Moreover, according to \cite[Proposition 6.4]{FV}, $(\mathfrak d,\pint)$ is a flat Lie algebra, 
since $\mathfrak d$ is unimodular. If, as above, $\k=\z\oplus\h\oplus\mathfrak k'$ is the decomposition of $\k$ given by Proposition \ref{gplana}, then the corresponding 
decomposition of $\mathfrak d$ is $\mathfrak{d}=\widetilde{\z}\oplus\widetilde{\h}\oplus\mathfrak{d}'$, where $\widetilde{\z}=\ker(D'|_{\z\cap J\z})$, $\widetilde{\h}=\R A\oplus 
\h$ and $\mathfrak{d}'=\k'\oplus \operatorname{Im}(D'|_{\z\cap J\z})$, whenever $D'\neq 0$. If $D'=0$, we have that $\widetilde{\z}=\R A\oplus \z$, $\widetilde{\h}=\h$ and 
$\mathfrak{d}'=\k'$.
\end{rem}

\

\subsection{Left-symmetric algebra structures}
In this section we show that any unimodular solvable Lie algebra equipped with either a Sasakian or a Vaisman structure admits also another kind of algebraic structure with a  
geometrical interpretation. 

A \textit{left-symmetric algebra (LSA)} structure on a Lie algebra $\mathfrak a$ is a bilinear product $\mathfrak a\times\mathfrak a \longrightarrow\mathfrak a,\,(x,y)\mapsto 
x\cdot y$, which satisfies
\begin{equation}\label{tfree} 
[x,y]=x\cdot y-y\cdot x
\end{equation}
and 
\begin{equation}\label{flat}
x\cdot(y\cdot z)-(x\cdot y)\cdot z=y\cdot(x\cdot z)-(y\cdot x)\cdot z,
\end{equation}
for any $x,y,z\in\mathfrak a$. See \cite{Bur} for a very interesting review on this subject.

LSA structures have the following well known geometrical interpretation. If $G$ is a Lie group and $\g$ is its Lie algebra, then LSA structures on $\g$ are in one-to-one 
correspondence with left invariant flat torsion-free connections $\nabla$ on $G$. Indeed, this correspondence is given as follows: $\nabla_xy=x\cdot y$ for $x,y\in\g$. Since this 
connection is left invariant, any quotient $\Gamma\backslash G$ of $G$ by a discrete subgroup $\Gamma$ also inherits a flat torsion-free connection. It is well known that one can 
study the completeness of the connection $\nabla$ on $G$ in terms only of the corresponding LSA structure on $\g$. Indeed, $\nabla$ is geodesically complete if and only if the 
endomorphisms $\operatorname{Id}+\rho(x)$ of $\g$ are bijective for all $x\in\g$, where $\rho(x)$ denotes right-multiplication by $x$, i.e.,  $\rho(x)y=y\cdot x$ (see for instance 
\cite{Se}). In this case, it is said that the LSA structure is \textit{complete}.

Note that the Levi-Civita connection of a flat metric on a Lie algebra is an example of a complete LSA structure, since the corresponding left invariant metric on any associated 
Lie group is homogeneous and therefore complete. 

\

\begin{teo}\label{LSA}
Let $(\h,\pint)$ be a flat Lie algebra and let $\beta$ denote a $2$-form which is parallel with respect to the Levi-Civita connection $\nabla$ of $\pint$ (hence $\beta$ is 
closed). 
Then the central extension $\g=\h_\beta(\xi)$ admits an LSA structure defined by
\[ (a\xi+x)\cdot(b\xi+y)=\frac12 \beta(x,y)\xi+\nabla_xy, \quad a,b\in\R,\, x,y\in\h.\]
Furthermore, this LSA structure is complete.
\end{teo}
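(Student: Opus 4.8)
The plan is to verify the two LSA axioms \eqref{tfree} and \eqref{flat} directly for the proposed product, and then to establish completeness by exhibiting the triangular structure of $\I+\rho(x)$. Writing a typical element of $\g=\h_\beta(\xi)$ as $a\xi+x$ with $a\in\R$, $x\in\h$, I first observe that the product depends only on the $\h$-components of its arguments, which streamlines every computation. For the torsion-free identity \eqref{tfree} I would compute
\[ (a\xi+x)\cdot(b\xi+y)-(b\xi+y)\cdot(a\xi+x)=\beta(x,y)\xi+(\nabla_xy-\nabla_yx), \]
using the skew-symmetry of $\beta$. Since $\nabla$ is the Levi-Civita connection it is torsion-free, so $\nabla_xy-\nabla_yx=[x,y]_\h$ and the right-hand side equals $[a\xi+x,b\xi+y]_\beta$, as required.

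For the left-symmetric identity \eqref{flat} I would compute the associator $(X,Y,Z):=X\cdot(Y\cdot Z)-(X\cdot Y)\cdot Z$ for $X=a\xi+x$, $Y=b\xi+y$, $Z=c\xi+z$, and split the result into its $\h$-component and its $\xi$-component. The $\h$-component equals $\nabla_x\nabla_yz-\nabla_{\nabla_xy}z$, so the required symmetry $(X,Y,Z)=(Y,X,Z)$ in this component is exactly
\[ \nabla_x\nabla_yz-\nabla_y\nabla_xz-\nabla_{[x,y]_\h}z=0, \]
which holds because the curvature of $\nabla$ vanishes ($\h$ is flat). The $\xi$-component of $(X,Y,Z)$ is $\tfrac12\big(\beta(x,\nabla_yz)-\beta(\nabla_xy,z)\big)$, and here I would use that $\beta$ is parallel: the condition $\nabla\beta=0$ reads $\beta(\nabla_xy,z)+\beta(y,\nabla_xz)=0$ for all $x,y,z\in\h$ (the term $x(\beta(y,z))$ vanishes since $\beta(y,z)$ is constant). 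Substituting this relation into both $(X,Y,Z)$ and $(Y,X,Z)$ reduces each $\xi$-component to $\tfrac12\big(\beta(x,\nabla_yz)+\beta(y,\nabla_xz)\big)$, so they coincide and \eqref{flat} follows.

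To prove completeness I would examine the right-multiplication operator $\rho(a\xi+x)(b\xi+y)=(b\xi+y)\cdot(a\xi+x)=-\tfrac12\beta(x,y)\xi+\nabla_yx$. With respect to the orthogonal decomposition $\g=\R\xi\oplus\h$ this operator annihilates $\xi$ and is block upper-triangular, so
\[ \I+\rho(a\xi+x)=\begin{pmatrix}1 & \ast\\ 0 & \I_\h+\rho_\h(x)\end{pmatrix}, \]
where $\rho_\h(x)y=\nabla_yx$ is right-multiplication in the Levi-Civita LSA of $\h$. As recalled above, the Levi-Civita connection of a flat left-invariant metric is geodesically complete (the metric is homogeneous), so its LSA is complete and $\I_\h+\rho_\h(x)$ is bijective for every $x\in\h$. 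Hence $\I+\rho(a\xi+x)$ is bijective for every element of $\g$, and the LSA structure on $\g$ is complete.

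The main obstacle is the bookkeeping in the left-symmetric identity, and in particular recognizing which hypothesis governs which component of the associator: flatness of the metric controls the $\h$-part (via vanishing curvature) while parallelism of $\beta$ controls the $\xi$-part. Once the correct form $\beta(\nabla_xy,z)+\beta(y,\nabla_xz)=0$ of the condition $\nabla\beta=0$ is isolated, the symmetrization is immediate; everything else is routine.
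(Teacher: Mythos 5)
Your proposal is correct and follows essentially the same route as the paper: the torsion-free identity via skew-symmetry of $\beta$ and torsion-freeness of $\nabla$, the left-symmetric identity via flatness of $\nabla$ for the $\h$-component and $\nabla\beta=0$ (in exactly the form $\beta(\nabla_xy,z)+\beta(y,\nabla_xz)=0$) for the $\xi$-component, and completeness by reducing $\I+\rho$ to $\I_\h+\rho_\h$ on $\h$ and invoking completeness of the Levi-Civita LSA of the flat homogeneous metric. The only cosmetic difference is that you check the symmetry of the associator componentwise, while the paper verifies the equivalent rearranged identity $x\cdot(y\cdot z)-y\cdot(x\cdot z)=[x,y]\cdot z$, and you phrase the completeness step as block upper-triangularity where the paper argues injectivity directly; these are the same argument.
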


\begin{proof}
 Taking into account that $\nabla$ is a torsion-free connection on $\h$ and the fact that $\xi$ is central in $\g$, it is easily verified that 
 \[ (a\xi+x)\cdot(b\xi+y)-(b\xi+y)\cdot(a\xi+x)=[a\xi+x,b\xi+y]_\beta.  \]
Therefore, \eqref{tfree} holds for this product. In order to prove \eqref{flat}, let us compute
 \begin{align*}
 (a\xi+x)\cdot((b\xi+y)\cdot(c\xi+z)) & -(b\xi+y)\cdot((a\xi+x)\cdot(c\xi+z))  = \\ & = \frac12\beta(x,\nabla_yz)\xi+\nabla_x\nabla_yz - \frac12\beta(y,\nabla_xz)\xi -\nabla_y\nabla_xz 
\\
 & = -\frac12\beta(\nabla_yx,z)\xi + \frac12\beta(\nabla_xy,z)\xi+\nabla_x\nabla_yz -\nabla_y\nabla_xz\\
 & = \frac12\beta([x,y],z)\xi+\nabla_{[x,y]}z \\
 & =[a\xi+x,b\xi+y]_\beta\cdot (c\xi+z),
 \end{align*}
where we have used $\nabla\beta=0$ in the third line and the fact that $\nabla$ is torsion-free and flat in the fourth line. This is equivalent to \eqref{flat}, thus this product 
is an LSA structure on $\g$.

Let us prove the completeness. Fix $b\xi+y\in \g$ and assume that $(\operatorname{Id}+\rho(b\xi+y))(a\xi+x)=0$. Then
\begin{align*} 
0 & = a\xi +x + \frac12 \beta(x,y)\xi+\nabla_xy \\
& = \left(a+\frac12 \beta(x,y)\right)\xi + \left(x+\nabla_xy\right),
\end{align*}
hence $a+\frac12 \beta(x,y)=0$ and $x+\nabla_xy=0$. But, since $\nabla$ itself is a complete LSA structure on $\h$, it follows that $x=0$. This implies that $a=0$, and the 
completeness follows.
\end{proof}

\medskip

\begin{cor}\label{LSA2}
 Let $\g$ be a unimodular solvable Lie algebra. 
 \begin{enumerate}
  \item If $\g$ carries a Vaisman structure, then $\g$ admits a complete LSA structure.
  \item If $\g$ carries a Sasakian structure, then $\g$ admits a complete LSA structure.
 \end{enumerate}
\end{cor}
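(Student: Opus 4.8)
The plan is to reduce both statements to a single application of Theorem \ref{LSA} by exhibiting $\g$, in each case, as a central extension $\h_\beta(\xi)$ of a \emph{flat} Lie algebra $\h$ by a \emph{parallel} $2$-form $\beta$. Once $\g$ is presented in this form, Theorem \ref{LSA} immediately furnishes a complete LSA structure, so the whole task is to identify the correct flat Lie algebra and to check that the $2$-form defining the extension is parallel for its Levi-Civita connection.

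For part (1), I would invoke Theorem \ref{cokahler}, which already writes $\g$ as the central extension $\mathfrak d_\Phi(JA)$ of the Lie algebra $\mathfrak d=\R A\ltimes_{D'}\k$ by the fundamental $2$-form $\Phi$ of its coK\"ahler structure $(\pint|_{\mathfrak d\times\mathfrak d},\phi,\xi,\eta)$. By Remark \ref{co-flat}, $(\mathfrak d,\pint)$ is flat, so it remains to verify that $\Phi$ is parallel. This follows at once from the definition of a coK\"ahler structure in \S\ref{prelim}: normality together with $d\eta=d\Phi=0$ forces $\phi$ to be parallel, and since $\Phi(x,y)=\langle\phi x,y\rangle$ and $\nabla\pint=0$, we get $\nabla\Phi=0$. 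Applying Theorem \ref{LSA} with $\h=\mathfrak d$, $\beta=\Phi$ and central generator $JA$ then yields a complete LSA structure on $\mathfrak d_\Phi(JA)\cong\g$.

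For part (2), I would use Corollary \ref{sasaki-centro2} together with the construction preceding it, which identifies a unimodular solvable Sasakian Lie algebra $\g$ with the central extension $\k_{\omega'}(\xi)$ of a K\"ahler flat Lie algebra $(\k,J,\pint)$ by its fundamental $2$-form $\omega'$. Here $\k$ is flat by hypothesis, and $\omega'$ is parallel because the K\"ahler condition $\nabla J=0$, combined with $\omega'(x,y)=\langle Jx,y\rangle$ and $\nabla\pint=0$, gives $\nabla\omega'=0$. A second application of Theorem \ref{LSA}, now with $\h=\k$ and $\beta=\omega'$, produces a complete LSA structure on $\g$.

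Since all the structural work has been carried out in the earlier results, I do not expect a serious obstacle; the argument is essentially bookkeeping. The only points requiring genuine care are (a) checking that the $2$-form appearing as $\beta$ in Theorem \ref{LSA} is exactly the fundamental form that defines the central extension in Theorem \ref{cokahler} and in Corollary \ref{sasaki-centro2}, which it is by construction, and (b) justifying the parallelism $\nabla\Phi=0$ and $\nabla\omega'=0$, both of which reduce to the standard fact that the fundamental $2$-form of a K\"ahler or coK\"ahler structure is parallel.
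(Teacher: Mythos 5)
Your proposal is correct and follows exactly the paper's own route: Theorem \ref{cokahler} plus Remark \ref{co-flat} plus Theorem \ref{LSA} for the Vaisman case, and Corollary \ref{sasaki-centro2} plus Theorem \ref{LSA} for the Sasakian case. The only difference is that you spell out the parallelism of $\Phi$ and $\omega'$, which the paper asserts without comment; this is a welcome but minor addition.
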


\begin{proof}
 If $\g$ admits a Vaisman structure then, according to Theorem \ref{cokahler}, $\g$ is a central extension of a coK\"ahler Lie algebra $\mathfrak d$ by the fundamental $2$-form 
$\Phi$, which is parallel. As mentioned in Remark \ref{co-flat}, $\mathfrak d$ is flat, and therefore (1) follows from Theorem \ref{LSA}.
 
 If $\g$ admits a Sasakian structure, then it follows from Corollary \ref{sasaki-centro2} that $\g$ is a central extension of a K\"ahler flat Lie algebra. Hence, (2) follows from 
Theorem \ref{LSA} again.
\end{proof}

\smallskip


\begin{cor}
Any solvmanifold admitting either an invariant Vaisman structure or an invariant Sasakian structure carries a geodesically complete flat torsion-free connection.
\end{cor}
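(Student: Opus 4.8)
The final statement to prove is the following corollary:

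\medskip

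\emph{Any solvmanifold admitting either an invariant Vaisman structure or an invariant Sasakian structure carries a geodesically complete flat torsion-free connection.}

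\medskip

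The plan is to deduce this corollary directly from Corollary \ref{LSA2} together with the dictionary between LSA structures on a Lie algebra and left invariant flat torsion-free connections on the corresponding Lie group. Let me outline the steps.

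First I would set up the framework. A solvmanifold is by definition a compact quotient $\Gamma\backslash G$, where $G$ is a simply connected solvable Lie group and $\Gamma$ is a lattice. By the discussion in the Introduction, the existence of a lattice forces $G$ to be unimodular (Milnor's criterion, \cite{Mi}). An invariant Vaisman (respectively Sasakian) structure on $\Gamma\backslash G$ is, by definition, one induced by a left invariant structure on $G$, which in turn corresponds to a Vaisman (respectively Sasakian) structure on the Lie algebra $\g=\operatorname{Lie}(G)$. Thus $\g$ is a unimodular solvable Lie algebra carrying a Vaisman or Sasakian structure, exactly the hypothesis of Corollary \ref{LSA2}.

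Next I would invoke Corollary \ref{LSA2} to obtain a complete LSA structure on $\g$. The key point is then the geometric interpretation of LSA structures recalled in \S 6.2: LSA structures on $\g$ are in one-to-one correspondence with left invariant flat torsion-free connections $\nabla$ on $G$, via $\nabla_x y = x\cdot y$ for $x,y\in\g$. Moreover, completeness of the LSA structure (bijectivity of $\operatorname{Id}+\rho(x)$ for all $x\in\g$) is precisely equivalent to geodesic completeness of the associated connection $\nabla$ on $G$. So the complete LSA structure furnished by Corollary \ref{LSA2} yields a geodesically complete flat torsion-free connection on $G$.

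Finally I would pass to the quotient. Since the connection $\nabla$ on $G$ is left invariant, it is in particular invariant under the action of the lattice $\Gamma\subset G$, and hence descends to a well-defined flat torsion-free connection on $\Gamma\backslash G$; being flat and torsion-free are local conditions preserved under the covering projection. For geodesic completeness on the quotient, I would note that $\Gamma\backslash G$ is compact, and a flat torsion-free connection on a compact manifold that is the quotient of a geodesically complete one remains geodesically complete, since geodesics of the connection on $\Gamma\backslash G$ lift to geodesics on $G$ which are defined for all time. This assembles the three ingredients into the claimed conclusion. I do not anticipate a serious obstacle here: essentially all the substantive work has already been carried out in Corollary \ref{LSA2}, and the remaining content is the standard descent of an invariant connection to the quotient; the only point requiring a word of care is the passage of geodesic completeness to the compact quotient, which follows from the lifting of geodesics through the covering $G\to\Gamma\backslash G$.
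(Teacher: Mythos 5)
Your argument is correct and follows exactly the route the paper intends: the paper states this corollary without a separate proof, as an immediate consequence of Corollary \ref{LSA2} together with the correspondence, recalled at the start of \S 6.2, between complete LSA structures on $\g$ and geodesically complete left invariant flat torsion-free connections on $G$, which descend to any quotient $\Gamma\backslash G$. Your additional remark on lifting geodesics through the covering $G\to\Gamma\backslash G$ is a correct (and slightly more explicit) justification of the completeness of the induced connection on the quotient.
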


\medskip

\

\end{document}